%%%%%%%%%%%%%%%%%%%%%%%%%%%%%%%%%%%%%%%%%%%%%%%%%%%%%%%%
%%%%%%%%%%%%%%%%%%%%%%%%%%%%%%%%%%%%%%%%%%%%%%%%%%%%%%%%
%
% ``Estimates for convolution products of resurgent functions''
%
% 26/05/2012 -- 03/04/2013 -- 16/12/2013
%
%%%%%%%%%%%%%%%%%%%%%%%%%%%%%%%%%%%%%%%%%%%%%%%%%%%%%%%%
%%%%%%%%%%%%%%%%%%%%%%%%%%%%%%%%%%%%%%%%%%%%%%%%%%%%%%%%

%%%%%%%%%%%%%%%%%%%%%%%%%%%%%%%%%%%%%%%%%%%%%%%%%%%%%%%%
%%%%%%%%%%%%%%%%%%%%%%%%%%%%%%%%%%%%%%%%%%%%%%%%%%%%%%%%

\documentclass[a4paper,11pt]{article}

\usepackage{epsfig,amsmath,amsfonts,amssymb,mathtools,amsthm,graphicx,xypic}
\usepackage{mathrsfs,enumerate,hyperref}%,eucal
%\usepackage[usenames,dvipsnames]{color}

%\hyphenation{para-bo-li-que in-va-ri-an-tes}

%%%%%%%%%%%
% Format
%%%%%%%%%%%

\addtolength{\oddsidemargin}{-1.40cm}
\addtolength{\topmargin}{-1.4cm}
\setlength{\textwidth}{15.95cm}
\setlength{\textheight}{22.5cm}

\setlength{\unitlength}{1cm}
\setlength{\parindent}{1.5em}

%\reversemarginpar

\newcommand{\dst}{\displaystyle}

\newcommand{\sst}{\scriptstyle}
\newcommand{\ens}{\enspace}

%%%%%%%%%%%%%%%%%%%%%%%%
% Abr\'eviations simples
%%%%%%%%%%%%%%%%%%%%%%%%

% Alphabet grec

\def\al{\alpha}
\def\be{\beta}
\def\ga{\gamma}
\def\Ga{{\Gamma}}
\def\de{\delta}

\def\De{\Delta}
\def\eps{{\varepsilon}}

\def\La{\Lambda}
\def\om{\omega}

\def\Om{\Omega}

\def\sig{{\sigma}}
\def\Sig{{\Sigma}}

\def\th{{\theta}}

\newcommand{\ph}{\varphi}

\def\ze{{\zeta}}
\newcommand{\dze}{\overset{\raisebox{-.23ex}{$\scriptscriptstyle\bullet$}}{\ze}}
\newcommand{\dzeph}{{\overset{\raisebox{-.23ex}{$\scriptscriptstyle\bullet$}}{\ze}}\vphantom{\ze'}}
\newcommand{\dxi}{{\overset{\raisebox{-.23ex}{$\scriptscriptstyle\bullet$}}{\xi}}\vphantom{\xi}}

%\newcommand{\dzep}{{\overset{\raisebox{-.23ex}{$\scriptscriptstyle\bullet$}}{\zeta'}}}

% Symboles math\'ematiques

\newcommand{\demi}{\frac{1}{2}}
\newcommand{\dem}{\tfrac{1}{2}}

         % norme
          % accolade ouvrante
          % accolade ouvrante
          % accolade fermante
          % accolade fermante
\newcommand{\un}[1]{{\underline{#1}}}
\newcommand{\ov}{\overline}

\newcommand{\dist}{\operatorname{dist}}
\newcommand{\spt}{\operatorname{spt}}
\newcommand{\leng}{\operatorname{length}}

\newcommand{\Det}{\operatorname{det}}

\newcommand{\SUM}{\operatorname{Sum}}

\newcommand{\ID}{\mathop{\hbox{{\rm Id}}}\nolimits}

\newcommand{\I}{{\mathrm i}}
\newcommand{\dd}{{\mathrm d}}

% \font \git cmmib10
% \def\ee{\hbox{\git e}}
\newcommand{\ee}{\mathrm e}

\newcommand{\pa}{\partial}

\newcommand{\ii}{^{-1}}
\newcommand{\ic}{^{\circ(-1)}}
\newcommand{\ti}{\tilde}

\newcommand{\bB}{\boldsymbol{B}}
\newcommand{\bbe}{\boldsymbol{e}}
\newcommand{\bj}{\boldsymbol{j}}
\newcommand{\bk}{\boldsymbol{k}}
\newcommand{\bn}{\boldsymbol{n}}

\newcommand{\RE}{\mathop{\Re e}\nolimits}

% \newcommand{\mod}{\hbox{mod}\,}

%%% Alternative definition of ``eopf'' to deal with long lines

% Autres abreviations

\def\ie{{\it i.e.}\ }

\def\eg{{\it e.g.}\ }
\def\resp{{resp.}\ }
\def\wrt{{with respect to}}
\def\lhs{{left-hand side}}
\def\rhs{{right-hand side}}

\def\dst{\displaystyle}

\def\sst{\scriptstyle}

% Grandes lettres

\newcommand{\C}{\mathbb{C}}

\newcommand{\D}{\mathbb{D}}

\newcommand{\N}{\mathbb{N}}

\newcommand{\R}{\mathbb{R}}

\newcommand{\Z}{\mathbb{Z}}

\def\cA{\mathcal{A}}
\def\cB{\mathcal{B}}

\def\cF{\mathcal{F}}

\renewcommand{\cH}{\mathcal{H}}

\def\cK{\mathcal{K}}
\def\cL{\mathcal{L}}

\def\cN{\mathcal{N}}

%\def\fC{\mathfrak{C}}
%\def\fD{\mathfrak{D}}
%\def\fE{\mathfrak{E}}

%%%%%%%%%%%%%%%%%%%%%%%%%
% Abr\'eviations complexes
%%%%%%%%%%%%%%%%%%%%%%%%%

%%%%%%%%%%%%%%%%%%%%%%%%%%%
% Mise en page
%%%%%%%%%%%%%%%%%%%%%%%%%%%

% Theorems and Such

\newtheorem{thm}{Theorem}
\newtheorem*{thmPdt}{Theorem \ref{thmFrechetAlg}'}
\newtheorem*{thmSubs}{Theorem \ref{thmSubst}'}
\newtheorem*{thmTFI}{Theorem \ref{thmImplicit}'}
\newtheorem*{thmGrp}{Theorem \ref{thmGroup}'}
\newtheorem*{thmS}{Theorem \ref{thmboundconvga}'}

\newtheorem{prop}{Proposition}[section]

\newtheorem{lem}[prop]{Lemma}%[section]

\newtheorem{Def}[prop]{Definition}%[section]

\newtheorem{nota}[prop]{Notation}

\theoremstyle{definition}
\newtheorem{rem}[prop]{Remark}%[section]

% Numberings
%\renewcommand{\thechapter}{\Roman{chapter}}
%\renewcommand{\thesection}{\arabic{section}}

%\numberwithin{equation}{section}
%\renewcommand{\theequation}{\thesection.\arabic{equation}}

% Subdivisions d'une section : paragraphes num\'erot\'es sans titre

\newcounter{parag}[section]

\newcounter{parage}%[section]

\newcounter{paraga}

%%%%%%%%%%%%%%%%%%%%%%%%
%%%%%%%%%%%%%%%%%%%%%%%%

\def\sst#1{^{[#1]}}

%\def\abs#1{\lvert#1\rvert}
%\def\babs#1{\left\vert#1\right\vert}

%%%%%%%%%%%%%
% taken from http://tex.stackexchange.com/a/43009
\DeclarePairedDelimiter\abs{\lvert}{\rvert}%
\DeclarePairedDelimiter\norm{\lVert}{\rVert}%
\DeclarePairedDelimiter\croc{\langle}{\rangle}%

% Swap the definition of \abs* and \norm*, so that \abs
% and \norm resizes the size of the brackets, and the 
% starred version does not.
\makeatletter
\let\oldabs\abs
\def\abs{\@ifstar{\oldabs}{\oldabs*}}
\let\oldnorm\norm
\def\norm{\@ifstar{\oldnorm}{\oldnorm*}}
\let\oldcroc\croc
\def\croc{\@ifstar{\oldcroc}{\oldcroc*}}
\makeatother
%%%%%%%%%%%%%
%
%\def\NORM#1{|||#1|||}

\def\om{\omega}

%%%%%%%%%%%%%%%%%%%%%%%%%%%%%%

% for \psi, \phi

\newcommand{\htab}[1]{{\stackrel{\raisebox{.23ex}{$\scriptstyle\wedge$}}{#1}}}

% for \ph, \chi

% for X, F

% for x

%\newcommand{\Rsimp}{\operatorname{\widehat{RES}\,\!^{\mathrm{simp}}}}

%\newcommand{\bem}{\mbox{}^b\hspace{-.5pt}}

%%%%%%%%%%%%%%%%%%%%%%%%%%%%%%%%%%%%%%%%%%%%%%%%%%%%%%

\def\Bbibitem#1#2{\bibitem[#1]{#2}}

%%%%%%%%%%%%%%%%%%%%%%%%%%%%%%%%%%%%%%%%%%%%%%%%%%%%%%%%%%%%%%%%%%%%%% 
%%%%%%%%%%%%%%%%%%%%%%%%%%%%%%%%%%%%%%%%%%%%%%%%%%%%%%%%%%%%%%%%%%%%%% 

\newcommand{\defeq}{\coloneqq} % \newcommand{\defeq}{:=}

\newcommand{\col}{\colon\thinspace}          %% for a map f \col A \to B

\newcommand{\zcz}{z\ii\C[[z\ii]]}

%%%%%%%%%%%%%%%%%%%%%%%%%%%%
%\newcommand{\gA}{\mathscr A}       %% calligraphie
       %% calligraphie
       %% calligraphie
\newcommand{\gD}{\mathscr D}       %% calligraphie
\newcommand{\gE}{\mathscr E}       %% calligraphie
\newcommand{\gG}{\mathscr G}       %% calligraphie
\newcommand{\gH}{\mathscr H}       %% calligraphie
\newcommand{\gJ}{\mathscr J}       %% calligraphie
       %% calligraphie
       %% calligraphie
\newcommand{\gO}{\mathscr O}       %% calligraphie
\newcommand{\gP}{\mathscr P}       %% calligraphie
\newcommand{\gR}{\mathscr R}       %% calligraphie
\newcommand{\gS}{\mathscr S}       %% calligraphie
\newcommand{\gU}{\mathscr U}       %% calligraphie
       %% calligraphie
%\newcommand{\gM}{\mathfrak M}
%\newcommand{\tM}{{\tilde{\mathfrak M}}}
%%%%%%%%%%%%%%%%%%%%%%%%%%%%

\newcommand{\eith}{\ee^{\I\th}}

\newcommand{\eul}{^{\raisebox{-.23ex}{$\scriptstyle\mathrm E$}}}

\newcommand{\Stir}{^{\raisebox{-.23ex}{$\scriptstyle\mathrm S$}}}

%%%%%%%%%%%%%%%%%%%%%%%%%%%%

\newcommand{\beglabel}[1]{\begin{equation}	\label{#1}}
\newcommand{\elabel}{\end{equation}}

%%%%%%%%%%%%%%%%%%%%%%%%%%%%%%%%%%%%%%%%%%%%%%%%%%%%%%%%

%%%%%%%%%%%%%%%%%%%%%%%%%%%%%%%%%%%%%%%%%%%%%%%%%%%%%%%%
%%%%%%%%%%%%%%%%%%%%%%%%%%%%%%%%%%%%%%%%%%%%%%%%%%%%%%%%

\begin{document}

\title{Nonlinear analysis with resurgent functions}

\author{David Sauzin}

%%%%%%%%%%%%%%%%%%%%%%%%%%%%%%%%%%%%%%%%%%%%%%%%%%%%%%%%
%%%%%%%%%%%%%%%%%%%%%%%%%%%%%%%%%%%%%%%%%%%%%%%%%%%%%%%%

\maketitle

\vspace{-.75cm}

\begin{abstract}
  We provide estimates for the convolution product of an arbitrary
  number of ``resurgent functions'', that is holomorphic germs at the
  origin of~$\C$ that admit analytic continuation outside a closed
  discrete subset of~$\C$ which is stable under addition.
  Such estimates are then used to perform nonlinear operations like
  substitution in a convergent series, composition or functional
  inversion with resurgent functions, and to justify the rules of
  ``alien calculus''; they also yield implicitly defined resurgent
  functions.
The same nonlinear operations can be performed in the framework of
Borel-Laplace summability.
\end{abstract}

%%%%%%%%%%%%%%%%%%%%%%%%%%%%%%%%%%%%%%%%%%%%%%%%%%%%%
%%%%%%%%%%%%%%%%%%%%%%%%%%%%%%%%%%%%%%%%%%%%%%%%%%%%%

\section{Introduction}

%%%%%%%%%%%%%%%%%%%%%%%%%%%%%%%%%%%%%%%%%%%%%%%%%%%%%
%%%%%%%%%%%%%%%%%%%%%%%%%%%%%%%%%%%%%%%%%%%%%%%%%%%%%

In the 1980s, to deal with local analytic problems of classification of dynamical
systems, J.~\'Ecalle started to develop his theory of resurgent
functions and alien derivatives
\cite{Eca81}, \cite{Eca84}, \cite{Eca93},
which is an efficient tool for dealing with divergent series arising from
complex dynamical systems or WKB expansions, analytic invariants of differential
or difference equations, linear and nonlinear Stokes phenomena
\cite{Mal82}, \cite{Mal85}, \cite{dulac}, \cite{CNP},
\cite{DiDePh}, \cite{Bal94}, \cite{DelabP},
\cite{GelfS}, \cite{OSS}, \cite{kokyu}, \cite{bookCostin}, 
\cite{mouldSN}, \cite{KKKT}, \cite{LRR}, \cite{FS11}, \cite{gazRamis}, \cite{KKK},
\cite{EVinva}, \cite{EVinvb};
connections were also recently found with Painlev\'e asymptotics
\cite{PainlI}, 
Quantum Topology \cite{Garou08}, \cite{CGKZ}
and Wall Crossing \cite{KSWC}.

The starting point in \'Ecalle's theory is the definition of certain
subalgebras of the algebra of formal power series by means of the
formal Borel transform
\beglabel{eqdefcB}
\cB \col 
\ti\ph(z) = \sum_{n=0}^\infty a_n z^{-n-1} \in \zcz
\mapsto 
\hat\ph(\ze) = \sum_{n=0}^\infty a_n \frac{\ze^n}{n!} \in \C[[\ze]]
\elabel
(using negative power expansions in the \lhs\ and changing the name of the
indeterminate from~$z$ to~$\ze$ are just convenient conventions).

It turns out that, for a lot of interesting functional equations, one
can find formal solutions which are divergent for all~$z$ and whose
Borel transforms define holomorphic germs at~$0$ with particular
properties of analytic continuation.
The simplest examples are the Euler series
\cite{CNP}, \cite{gazRamis}, 
which can be written $\ti\ph\eul(z) = \sum_{n=0}^\infty (-1)^n n! z^{-n-1}$ and solves
a first-order linear non-homogeneous differential equation,
and the Stirling series \cite[Vol.~3]{Eca81}
\[
\ti\ph\Stir(z) = \sum_{k=1}^\infty \frac{B_{2k}}{2k(2k-1)} z^{-2k+1}
\] 
(here expressed in terms of the Bernoulli numbers),
solution of a linear non-homogeneous difference equation derived from the
functional equation for Euler's Gamma function by taking logarithms.
In both examples the Borel transform gives rise to convergent series with a
meromorphic extension to the $\ze$-plane, namely $(1+\ze)\ii$ for the Euler
series and 
$\ze^{-2}\left( \frac{\ze}{2}\coth\frac{\ze}{2} - 1 \right)$
for the Stirling series (see \cite{introsummaresur}).
In fact, holomorphic germs at~$0$ with meromorphic or algebraic
analytic continuation are examples of ``resurgent functions'';
%
%The single-valuedness of the analytic continuation must not be... 
%
% (due to nonlinearities in the problem they originate from)...
%
more generally, what is required for a resurgent function is
%
% the property which persists in many interesting situations is rather 
%
the possibility of following the analytic continuation without
encountering natural barriers.

One is thus led to distinguish certain subspaces~$\hat\gR$
of~$\C\{\ze\}$, characterized by properties of analytic
continuation which ensure a locally discrete set of singularities for
each of its members (and which do not preclude multiple-valuedness of
the analytic continuation), and to consider
\[
\ti\gR \defeq \C \oplus \cB\ii(\hat\gR) \subset \C[[z\ii]].
\]
Typically one has the strict inclusion 
$\C\{z\ii\} \subsetneq \ti\gR$ but the divergent series in~$\ti\gR$ can be
``summed'' by means of Borel-Laplace summation.
The formal series in~$\ti\gR$ as well as the holomorphic functions whose germ
at~$0$ belongs to~$\hat\gR$ are termed ``resurgent''.
(One also defines, for each $\om\in\C^*$, an ``alien operator'' which
measures the singularities at~$\om$ of certain branches of the
analytic continuation of~$\hat\ph$.)

%%%%%%%%%%%%%%%%%%%%%%%%%%%%%%%%%%%%%%%%%%%%%%%%%%%%%%%%

Later we shall be more specific about the definition of~$\hat\gR$.
This article is concerned with the convolution of resurgent functions:
the convolution in $\C\{\ze\}$ is the commutative associative
product defined by
\beglabel{eqdefconvoldeux}
\hat\ph_1 * \hat\ph_2(\ze) =
\int_0^\ze \hat\ph_1(\ze_1) \hat\ph_2(\ze-\ze_1) \,\dd\ze_1
\qquad \text{for $|\ze|$ small enough,}
\elabel
for any $\hat\ph_1,\hat\ph_2 \in \C\{\ze\}$,
which reflects the Cauchy product of formal series via the formal Borel
transform:
\[
\cB\ti\ph_1 = \hat\ph_1 \enspace\text{and}\enspace 
\cB\ti\ph_2 = \hat\ph_2 
\quad \Longrightarrow \quad
\cB(\ti\ph_1 \ti\ph_2) = \hat\ph_1 * \hat\ph_2.
\]
Since the theory was designed to deal with nonlinear problems, it is of
fundamental importance to control the convolution product of resurgent
functions;
however, this requires to follow the analytic continuation of the function
defined by~\eqref{eqdefconvoldeux}, which turns out not to be an easy task.
In fact, probably the greatest difficulties in understanding and applying
resurgence theory are connected with the problem of controlling the analytic
continuation of functions defined by such integrals or by analogous multiple
integrals.
Even the mere stability under convolution of the spaces~$\hat\gR$ is not obvious
\cite{Eca81}, \cite{CNP}, \cite{Y_Ou}, \cite{stabiconv}.

%%%%%%%%%%%%%%%%%%%%%%%%%%%%%%%%%%%%%%%%%%%%%%%%%%%%%%%%

We thus need to estimate the convolution product of two or more resurgent
functions, both for concrete manipulations of resurgent functions in nonlinear
contexts and for the foundations of the resurgence theory.
For instance, such estimates will allow us to check that, when we come back to
the resurgent series via~$\cB$, the exponential of a resurgent series is
resurgent and that more generally one can substitute resurgent series in
convergent power expansions, or define implicitly a resurgent series,
or develop ``alien calculus'' when manipulating \'Ecalle's alien derivatives.
They will also show that the group of ``formal tangent-to-identity
diffeomorphisms at~$\infty$'', \ie the group (for the composition law)
$z+\C[[z\ii]]$, admits $z+\ti\gR$ as a subgroup, which is particularly useful
for the study of holomorphic tangent-to-identity diffeomorphisms~$f$ (in this
classical problem of local holomorphic dynamics \cite{Milnor}, the Fatou
coordinates have the same resurgent asymptotic expansion, the so-called direct
iterator $f^* \in z+\ti\gR$ of \cite{Eca81}; thus its inverse, the inverse
iterator, also belongs to $z+\ti\gR$, as well as its exponential, which appears
in the Bridge equation connected with the ``horn maps''---see
\S~\ref{secgroupresurdiffeos}).

%%%%%%%%%%%%%%%%%%%%%%%%%%%%%%%%%%%%%%%%%%%%%%%%%%%%%%%%

Such results of stability of the algebra of resurgent series under nonlinear
operations are mentioned in \'Ecalle's works, however the arguments there are
sketchy and it was desirable to provide a proof.\footnote{%
  This was one of the tasks undertaken in the seminal book \cite{CNP}
  but, despite its merits, one cannot say that this book clearly
  settled this particular issue: the proof of the estimates for the
  convolution is obscure and certainly contains at least one mistake
  (see Remark~\ref{remmistCNP}).
}
Indeed, the subsequent authors dealing with resurgent series either
took such results for granted or simply avoided resorting to them.
The purpose of this article is to give clear statements with rigorous
and complete proofs, so as to clarify the issue and contribute to make
resurgence theory more accessible, hopefully opening the way for
new applications of this powerful theory.

%%%%%%%%%%%%%%%%%%%%%%%%%%%%%%%%%%%%%%%%%%%%%%%%%%%%%%%%

In this article, we shall deal with a particular case of resurgence called $\Om$-continuability
or $\Om$-resurgence, which means that we fix in advance a discrete subset~$\Om$
of~$\C$ and restrict ourselves to those resurgent functions whose analytic
continuations have no singular point outside of~$\Om$.
Many interesting cases are already covered by this definition (one encounters
$\Om$-continuable germs with $\Om=\Z$ when dealing with differential equations
formally conjugate to the Euler equation or in the study of the saddle-node
singularities \cite{Eca84}, \cite{mouldSN}, or with $\Om = 2\pi\I\Z$ when
dealing with certain difference equations like Abel's equation for
tangent-to-identity diffeomorphisms \cite{Eca81}, \cite{kokyu}, \cite{EVinva}).
We preferred to restrict ourselves to this situation so as to make our method
more transparent, even if more general definitions of resurgence can be
handled---see Section~\ref{secPossAppl}.
An outline of the article is as follows: 
\begin{enumerate}[--]
\item
In Section~\ref{secOmcontg}, we recall the precise definition of the
corresponding algebras of resurgent functions, denoted by~$\hat\gR_\Om$, and
state Theorem~\ref{thmboundconvga}, which is our main result on the control of
the convolution product of an arbitrary number of $\Om$-continuable functions.
\item
In Section~\ref{secNLappli}, we give applications to the construction of a
Fr\'echet algebra structure on~$\ti\gR_\Om$ (Theorem~\ref{thmFrechetAlg})
and to the stability of $\Om$-resurgent series under substitution (Theorem~\ref{thmSubst}), 
implicit function (Theorem~\ref{thmImplicit})
and composition (Theorem~\ref{thmGroup});
we also mention other possible applications and similar results for
$1$-summable series.
\item 
The proof of Theorem~\ref{thmboundconvga} is given in
Sections~\ref{secstartpf}--\ref{secestimisot}.
\item
Finally, there is an appendix on a few facts of the theory of currents which are
used in the proof of the main theorem.
\end{enumerate}

Our method consists in representing the analytic continuation of a convolution
product as the integral of a holomorphic $n$-form on a singular $n$-simplex
obtained as a suitable deformation of the standard $n$-simplex;
we explain in Sections~\ref{secstartpf}--\ref{secdefor} what kind of
deformations (``adapted origin-fixing isotopies'' of the identity) are licit in
order to provide the analytic continuation and how to produce them.
We found the theory of currents very convenient to deal with our integrals of
holomorphic forms, because it allowed us to content ourselves with little
regularity: 
the deformations we use are only Lipschitz continuous, because they are built from
the flow of non-autonomous Lipschitz vector fields---see
Section~\ref{secConstrIsot}.
Section~\ref{secestimisot} contains the last part of the proof, which consists
in providing appropriate estimates. 

\newpage

%%%%%%%%%%%%%%%%%%%%%%%%%%%%%%%%%%%%%%%%%%%%%%%%%%%%%
%%%%%%%%%%%%%%%%%%%%%%%%%%%%%%%%%%%%%%%%%%%%%%%%%%%%%

\section{The convolution of $\Om$-continuable germs}	\label{secOmcontg}

%%%%%%%%%%%%%%%%%%%%%%%%%%%%%%%%%%%%%%%%%%%%%%%%%%%%%
%%%%%%%%%%%%%%%%%%%%%%%%%%%%%%%%%%%%%%%%%%%%%%%%%%%%%

\begin{nota}
For any $R>0$ and $\ze_0\in\C$ we use the notations
$D(\ze_0,R) \defeq \{\, \ze\in\C \mid |\ze-\ze_0|< R \,\}$,
$\D_R \defeq D(0,R)$, 
$\D^*_R \defeq \D_R \setminus \{0\}$.
%
% We call ``path'' any continuous piecewise $C^1$ function $\ga \col J \to\C$,
% where $J=[a,b]$ is a compact interval of~$\R$.
%
% if a holomorphic germ~$\hat\ph$ at~$\ga(a)$ admits analytic continuation
% along~$\ga$, then we denote by $\cont_\ga\hat\ph$ the resulting
% holomorphic germ at~$\ga(b)$;
% %
% for $c\in [a,b]$, we denote by $\ga|c$ the truncated path $\ga_{|[a,c]}$, thus
% $\cont_{\ga|c}\hat\ph$ is a germ at~$\ga(c)$.
%
\end{nota}

%%%%%%%%%%%%%%%%%%%%%%%%%%%%%%%%%%%%%%%%%%%%%%%%%%%%%

Let $\Om$ be a closed, discrete subset of~$\C$ containing~$0$.
We set 
\[
\rho(\Om) \defeq  \min\big\{ |\om|, \; \om\in\Om\setminus\{0\} \big\}.
\]
Recall \cite{stabiconv} that the space $\hat\gR_\Om$ of all $\Om$-continuable
germs is the subspace of $\C\{\ze\}$ which can be defined by the fact that, for
arbitrary $\ze_0 \in \D_{\rho(\Om)}$,
\[
\hat\ph\in\hat\gR_\Om \quad\Longleftrightarrow\quad
\left| \begin{aligned}
&\text{$\hat\ph$ germ of holomorphic function of~$\D_{\rho(\Om)}$ admitting analytic continuation}\\
&\text{along any path $\ga\col [0,1] \to \C$ such that $\ga(0) = \ze_0$ and $\ga\big( (0,1] \big)
\subset \C\setminus\Om$.}
\end{aligned} \right.
\]
For example, for the Euler series, \resp the Stirling series, the Borel
transform belongs to $\hat\gR_\Om$ as soon as $1 \in \Om$, 
\resp ${2\pi\I\Z^*} \subset \Om$.

It is convenient to rephrase the property of $\Om$-continuability as holomorphy
on a certain Riemann surface spread over the complex plane, 
$(\gS_\Om,\pi_\Om)$.

%%%%%%%%%%%%%%%%%%%%%%%%%%%%%%%%%%%%%%%%%%%%%%%%%%%%%
\begin{Def}
Let $I \defeq [0,1]$ and consider the set~$\gP_\Om$ of all paths $\ga \col I \to \C$ such that 
either $\ga(I) = \{0\}$
or $\ga(0)=0$ and $\ga\big( (0,1] \big) \subset \C\setminus\Om$.
We denote by
\[
\gS_\Om \defeq \gP_\Om / \sim
\]
the quotient set of~$\gP_\Om$ by the equivalence relation~$\sim$
defined by
\[
\ga \sim \ga' \quad\Longleftrightarrow\quad
\exists (\ga_s)_{s\in I} \;\text{such that}\ens
\left\{ \begin{aligned}
&\text{for each $s\in I$, $\ga_s\in\gP_\Om$ and $\ga_s(1)=\ga(1)$}\\
& \text{$(s,t)\in I\times I \mapsto \ga_s(t)\in\C$ is continuous, $\ga_0=\ga$, $\ga_1 = \ga'$}
\end{aligned} \right.
\]
for $\ga,\ga' \in \gP_\Om$ (homotopy with fixed endpoints).
The map $\ga\in\gP_\Om \mapsto \ga(1)\in \{0\}\cup\C\setminus\Om$ passes to the
quotient and defines the ``projection'' 
\beglabel{eqdefpiOm}
\pi_\Om \col \ze \in \gS_\Om \to \dze \in \{0\}\cup\C\setminus\Om.
\elabel
We equip~$\gS_\Om$ with the unique structure of Riemann surface which turns~$\pi_\Om$
into a local biholomorphism.
The equivalence class of the trivial path $\ga(t)\equiv0$ is denoted by~$0_\Om$
and called the origin of~$\gS_\Om$. 
\end{Def}
%%%%%%%%%%%%%%%%%%%%%%%%%%%%%%%%%%%%%%%%%%%%%%%%%%%%%

We obtain a connected, simply connected Riemann surface~$\gS_\Om$, which is
somewhat analogous to the universal cover of $\C\setminus\Om$ except for the
special role played by~$0$ and~$0_\Om$:
since we assumed $0\in\Om$, the equivalence class~$0_\Om$ of the
trivial path is reduced to the trivial path and is the only point of~$\gS_\Om$ which projects onto~$0$.
It belongs to the \emph{principal sheet of~$\gS_\Om$}, defined as the set of
all $\ze\in\gS_\Om$ which can be represented by a line segment (\ie such that the path
$t\in[0,1] \mapsto t\, \dze$ belongs to $\gP_\Om$ and
represents~$\ze$);
observe that~$\pi_\Om$ induces a biholomorphism from the principal
sheet of~$\gS_\Om$ to the cut plane $\dst U_\Om \defeq\C \setminus
\bigcup_{\om\in\Om\setminus\{0\}} \om[1,+\infty)$.

%%%%%%%%%%%%%%%%%%%%%%%%%%%%%%%%%%%%%%%%%%%%%%%%%%%%%

Any holomorphic function of~$\gS_\Om$ identifies itself with a convergent germ
at the origin of~$\C$ which admits analytic continuation along all the paths
of~$\gP_\Om$, so that
\[
\hat\gR_\Om \simeq \gO(\gS_\Om)
\]
(see \cite{Eca81}, \cite{kokyu}). 
We usually use the same symbol~$\hat\ph$ for a function of~$\gO(\gS_\Om)$ or the
corresponding germ of holomorphic function at~$0$ (\ie its Taylor series).
Each $\hat\ph \in \hat\gR_\Om$ has a well-defined principal branch
holomorphic in~$U_\Om$, obtained (via~$\pi_\Om$) by restriction to the
principal sheet of~$\gS_\Om$, for which~$0$ is a regular point,
but the points of~$\gS_\Om$ which lie outside of the principal sheet
correspond to branches of the analytic continuation which might have a
singularity at~$0$
(for instance, as soon as $\{0,1\} \subset \Om$, the Taylor series
$\sum_{n\ge0} \frac{\ze^n}{n+1} = -\frac{1}{\ze}\log(1-\ze)$ defines a
member of~$\hat\gR_\Om$ of which all branches except the principal one
have a simple pole at~$0$).

%%%%%%%%%%%%%%%%%%%%%%%%%%%%%%%%%%%%%%%%%%%%%%%%%%%%%

\emph{From now on we assume that $\Om$ is stable under addition.} 
According to \cite{stabiconv}, this ensures that $\hat\gR_\Om$ is stable under convolution.
Our aim is to provide explicit bounds for the analytic continuation of a
convolution product of two or more factors belonging
to~$\hat\gR_\Om$.

It is well-known that, if $U\subset\{0\}\cup\C\setminus\Om$ is open and
star-shaped \wrt~$0$ (as is~$U_\Om$)
and two functions $\hat\ph_1,\hat\ph_2$ are holomorphic in~$U$, 
then their convolution product has an analytic continuation to~$U$ which is
given by the very same formula~\eqref{eqdefconvoldeux}; by induction, one gets a
representation of a product of $n$ factors $\hat\ph_j\in\gO(U)$ as an iterated
integral,
$\hat\ph_1 * \cdots * \hat\ph_n(\ze) = $
\beglabel{eqiterconvol}
\int_0^\ze \dd \ze_1 \int_0^{\ze-\ze_1} \dd \ze_2 \cdots
\int_0^{\ze-(\ze_1+\cdots+\ze_{n-2})} \dd \ze_{n-1} \,
\hat\ph_1(\ze_1) \cdots \hat\ph_{n-1}(\ze_{n-1})
\hat\ph_n( \ze - (\ze_1+\cdots+\ze_{n-1}) )
\elabel
for any $\ze \in U$, which leads to
\beglabel{ineqppalsheet}
\abs{ \hat\ph_1*\cdots*\hat\ph_n(\ze) } \le 
\frac{\abs{\ze}^{n-1}}{(n-1)!} 
\max_{[0,\ze]}\abs{\hat\ph_1} \cdots \max_{[0,\ze]}\abs{\hat\ph_n},
\qquad \ze\in U.
\elabel
This allows one to control convolution products in the principal sheet
of~$\gS_\Om$ 
(which is already sufficient to deal with $1$-summability issues---see
Section~\ref{secNLsumma})
but, to reach the other sheets, formula~\eqref{eqdefconvoldeux}
must be replaced by something else, as explained \eg in \cite{stabiconv}. What
about the bounds for a product of $n$ factors then?
To state our main result, we introduce
\begin{nota}
The function $R_\Om \col \gS_\Om \to (0,+\infty)$ is defined by
\beglabel{eqdefRze}
\ze\in\gS_\Om \;\mapsto\; R_\Om(\ze) \defeq \begin{dcases*}
\dist\big(\dze,\Om\setminus\{0\}\big) 
& if $\ze$ belongs to the principal sheet of $\gS_\Om$ \\[1ex]
\dist\big(\dze,\Om\big) 
& if not
\end{dcases*}
\elabel
(where $\dze$ is the shorthand for $\pi_\Om(\ze)$ defined by~\eqref{eqdefpiOm}).
For $\de,L>0$, we set
\begin{multline}	\label{eqdefLdeL}
\cK_{\de,L}(\Om) \defeq \big\{\, 
\ze\in\gS_\Om \mid \text{$\exists \ga$ path of~$\gS_\Om$ 
with endpoints $0_\Om$ and~$\ze$, of length $\le L$,}\\[1ex]
\text{such that $R_\Om(\ga(t)) \ge \de$ for all~$t$}
\,\}.
\end{multline}
\end{nota}

Informally, $\cK_{\de,L}(\Om)$ consists of the points of~$\gS_\Om$ which can be
joined to~$0_\Om$ by a path of length $\le L$ ``staying at distance $\ge\de$ from
the boundary''.\footnote{
Given $\ze\in\gS_\Om$, observe that any $\hat\ph \in \hat\gR_\Om$
induces a function holomorphic in $D\big(\dze,R_\Om(\ze)\big)$ and
$R_\Om(\ze)$ is maximal for that property.
  The idea is that~$R_\Om$ measures the distance to the closest
  possibly singular point, \ie the distance to~$\Om$ except that on
  the principal sheet $0$ must not be considered as a possibly
  singular point.
}
Observe that $\big(\cK_{\de,L}(\Om)\big)_{\de,L>0}$ is an exhaustion of~$\gS_\Om$ by
compact subsets.
If $L+\de<{\rho(\Om)}$, then $\cK_{\de,L}(\Om)$ is just the lift of the closed
disc~$\ov\D_L$ in the principal sheet of~$\gS_\Om$.

%%%%%%%%%%%%%%%%%%%%%%%%%%%%%%%%%%%%%%%%%%%%%%%%%%%%%

\label{secmainthm}
\begin{thm}	\label{thmboundconvga}
Let $\Om\subset\C$ be closed, discrete, stable under addition, with $0\in\Om$.
Let $\de,L>0$ with $\de < \rho(\Om)$ and 
\beglabel{eqdefconstants}
C \defeq \rho(\Om)\,\ee^{3+6L/\de}, \qquad
\de' \defeq \demi \rho(\Om)\,\ee^{-2-4L/\de}, \qquad
L' \defeq L+\frac{\de}{2}.
\elabel
Then, for any $n\ge1$ and $\hat\ph_1,\ldots,\hat\ph_n \in \hat\gR_\Om$,
\beglabel{ineqconvga}
\max_{\cK_{\de,L}(\Om)}\abs{ \hat\ph_1*\cdots*\hat\ph_n }
\le \frac{2}{\de} \cdot \frac{C^n}{n!} \cdot
\max_{\cK_{\de',L'}(\Om)}\abs{\hat\ph_1} \cdots \max_{\cK_{\de',L'}(\Om)}\abs{\hat\ph_n}.
\elabel
\end{thm}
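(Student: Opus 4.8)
The plan is to represent the analytic continuation of $\hat\ph_1*\cdots*\hat\ph_n$ along a path in $\gS_\Om$ as an integral of a holomorphic $n$-form over a singular $n$-simplex in the product space, obtained by deforming the standard $n$-simplex $\{(\ze_1,\ldots,\ze_n) : \ze_i\ge 0,\ \sum\ze_i\le 1\}$ suitably rescaled. The starting point is the iterated-integral formula \eqref{eqiterconvol}, valid in any star-shaped domain; the issue is that when the endpoint $\ze\in\cK_{\de,L}(\Om)$ lies outside the principal sheet, the straight simplex with vertex at the origin no longer avoids $\Om$ in each coordinate and in each partial sum. So first I would fix $\ze\in\cK_{\de,L}(\Om)$ together with a path $\ga$ of length $\le L$ in $\gS_\Om$ realizing it, staying at distance $\ge\de$ from the ``boundary'', and then describe how to build an \emph{adapted origin-fixing isotopy} of the identity: a Lipschitz family $(\Phi_s)_{s\in I}$ of maps of (a neighbourhood of) the closed unit simplex into $\gS_\Om^{\,n}$ with $\Phi_0$ the standard straight embedding scaled to have terminal vertex at the origin and $\Phi_1$ the embedding whose terminal vertex is $\ze$, such that along the deformation the integrand $\hat\ph_1(\ze_1)\cdots\hat\ph_n(\ze-\ze_1-\cdots-\ze_{n-1})$ remains holomorphic — i.e.\ none of the $n$ ``evaluation points'' $\ze_1,\ \ze_2,\ \ldots,\ \ze-\ze_1-\cdots-\ze_{n-1}$ ever hits $\Om$ on a bad sheet. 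This is where the flow of a non-autonomous Lipschitz vector field (built essentially from a tubular velocity field following $\ga$) enters, and I would invoke the machinery of Sections~\ref{secstartpf}--\ref{secdefor} to guarantee that such isotopies exist and that, by Stokes' theorem for currents (appendix), the integral is independent of $s$; hence $\hat\ph_1*\cdots*\hat\ph_n(\ze)$ equals the integral of the pulled-back holomorphic $n$-form over $\Phi_1$ of the standard simplex.

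Next comes the estimate. The integral over the deformed $n$-simplex is bounded by (the $n$-dimensional volume of the image, or more precisely the total variation of the pushforward current) times the supremum of $|\hat\ph_1\cdots\hat\ph_n|$ over the track of the deformation. The key quantitative points are: (a) the track of each of the $n$ evaluation points stays inside $\cK_{\de',L'}(\Om)$ for the stated $\de',L'$ — this is exactly why the constants in \eqref{eqdefconstants} are chosen with those exponential factors $\ee^{\pm(2+4L/\de)}$, which come from integrating the logarithmic growth/shrinking of the function $R_\Om$ along the flow lines (a Gronwall-type estimate: the distance to $\Om$ can contract at most exponentially in arclength/$\de$); and (b) the $n$-volume swept out is controlled by $\dfrac{C^n}{n!}$ with an extra harmless factor $\dfrac{2}{\de}$, the $\dfrac{1}{n!}$ being the volume of the standard $n$-simplex and $C^n$ absorbing the linear dilation (diameter $\sim\rho(\Om)\,\ee^{3+6L/\de}$) of the deformation together with the Lipschitz constant of $\Phi_1$ in each of the $n$ factors. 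So the proof reduces to: (i) an existence/independence statement for adapted isotopies, and (ii) two scalar estimates — an exponential bound on how $R_\Om$ evolves along the flow, and a volume/Jacobian bound for $\Phi_1$.

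The main obstacle is step~(i): constructing an isotopy that \emph{simultaneously} keeps all $n$ evaluation maps $\ze\mapsto$ (partial-sum complement) inside the region of holomorphy, uniformly in $n$, while keeping the Lipschitz constants under control so that the volume bound in (ii) is genuinely $C^n/n!$ rather than something worse. Because the same path $\ga$ must ``drag along'' the terminal vertex $\ze-\ze_1-\cdots-\ze_{n-1}$ while the other coordinates are only mildly perturbed, one has to design the vector field so that its flow fixes the origin (the vertex $0_\Om$ stays put, which is what ``origin-fixing'' means and what makes the terms like $\hat\ph_j(\ze_j)$ near $\ze_j=0$ harmless) and so that the induced motion of each partial sum $\ze_1+\cdots+\ze_k$ is itself a licit path in $\gS_\Om$. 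Verifying this compatibility — that a deformation licit for the \emph{last} coordinate is automatically licit for \emph{all} the intermediate partial-sum coordinates — is the delicate combinatorial/geometric heart of the argument; once it is in place, the estimates (ii) are routine Gronwall and volume computations, and the factor $\dfrac{2}{\de}$ in \eqref{ineqconvga} is the modest price paid for passing from the open region $\cK_{\de',L'}(\Om)$ where the sup is taken to the closed region $\cK_{\de,L}(\Om)$ where it is evaluated.
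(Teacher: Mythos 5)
Your overall strategy --- representing the analytic continuation of the convolution as the integral of a holomorphic $n$-form over a Lipschitz deformation of the standard simplex generated by the flow of a non-autonomous vector field, with a Gronwall estimate controlling the distance to~$\Om$ along the flow and a Jacobian bound producing the $C^n/n!$ --- is the paper's. But there is a concrete gap in how you reach the statement as written. You propose to deform the iterated-integral representation~\eqref{eqiterconvol} of $\hat\ph_1*\cdots*\hat\ph_n$ itself, whose last evaluation point $\ze-\ze_1-\cdots-\ze_{n-1}$ is a \emph{dependent} coordinate, and you then explain the factor $\frac{2}{\de}$ as ``the price of passing from the open region to the closed region''; that explanation is not correct. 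The paper first proves a bound for $1*\hat\ph_1*\cdots*\hat\ph_n$ (Theorem~\ref{thmboundconvga}$'$): prepending the constant germ~$1$ turns the product into a genuine $n$-fold integral over~$\De_n$ in which all $n$ evaluation points $\ze_1,\ldots,\ze_n$ are independent coordinates, while the dependent quantity $\ze_0=\ga(t)-S_n(\un\ze)$ (where only~$1$ is evaluated, so it need not avoid~$\Om$) enters only through the denominator $D=\eta(\dze_1)+\cdots+\eta(\dze_n)+\eta(\ze_0)$ of the vector field~\eqref{eqdefX}. It is the stability of~$\Om$ under addition, applied to $\ze_0+\dze_1+\cdots+\dze_n=\ga(t)$, that keeps $D\ge\de$ uniformly in~$n$ --- this is the resolution of what you rightly single out as the delicate heart. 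One then recovers $\hat\ph_1*\cdots*\hat\ph_n=\frac{\dd\,}{\dd\ze}\big(1*\hat\ph_1*\cdots*\hat\ph_n\big)$ and applies the Cauchy inequality on a disc of radius $\de/2$ around each point of $\cK_{\de,L}(\Om)$: that is the actual origin of the factor $\frac{2}{\de}$, and it also forces the substitutions $\de\to\de/2$, $L\to L'=L+\de/2$ in the constants of Theorem~\ref{thmboundconvga}$'$, which is where the additive constants $3$ and $2$ and the coefficients $6$ and $4$ in~\eqref{eqdefconstants} come from; your attribution of these to the Gronwall estimate alone does not produce them.

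A second, smaller warning: you bound the integral by ``the $n$-dimensional volume of the image, or the total variation of the pushforward current''. The quantity that must be bounded is $\abs{\Det[\pa\dxi^t_i/\pa s_j]}$, the \emph{complex} $n\times n$ determinant of the complex-valued partial derivatives with respect to the real simplex coordinates; since the deformation is not holomorphic, this is not the real Jacobian (nor the mass of the pushforward current), and conflating the two is precisely the error of \cite{CNP} pointed out in Remark~\ref{remmistCNP}. The paper bounds this determinant by the product of the $\ell^1$-norms of its columns, each column being controlled by the Lipschitz estimate~\eqref{ineqxispxi} obtained from Gronwall's lemma.
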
 

%%%%%%%%%%%%%%%%%%%%%%%%%%%%%%%%%%%%%%%%%%%%%%%%%%%%%

The proof of Theorem~\ref{thmboundconvga} will start in
Section~\ref{secstartpf}.
We emphasize that $\de,\de',L,L',C$ do not depend on~$n$, which is important in
applications.

%%%%%%%%%%%%%%%%%%%%%%%%%%%%%%%%%%%%%%%%%%%%%%%%%%%%%

\begin{rem}
In fact, \emph{a posteriori}, one can remove the assumption $0 \in \Om$.
Suppose indeed that $\Om$ is a non-empty closed discrete subset of~$\C$ which does \emph{not}
contain~$0$. 
Defining the space~$\hat\gR_\Om$ of $\Om$-continuable germs as above
\cite{stabiconv},
we then get $\hat\gR_\Om \simeq \gO(\gS_\Om)$, where $\gS_\Om$ is the universal
cover of $\C\setminus\Om$ with base point at the origin
(the fibre of~$0$ is no longer exceptional).
Clearly $\hat\gR_\Om \subset \hat\gR_{\{0\}\cup\Om}$, but the
inclusion is strict, 
because $\Om$-continuable germs are required to extend analytically
through~$0$ even when following a path which has turned around the
points of~$\Om$ and \eg
$\sum_{n\ge0} \frac{\ze^n}{(n+1)\om^{n+1}} = -\frac{1}{\ze}\log(1-\frac{\ze}{\om})$
is in $\hat\gR_{\{0\}\cup\Om}$ but not in $\hat\gR_\Om$ for any $\om\in\Om$.
Suppose moreover that $\Om$ is stable under addition.
It is shown in \cite{stabiconv} that also in this case is $\hat\gR_\Om$ stable
under convolution.
\emph{One can adapt all the results of this article to this case.}
It is sufficient to observe that any point~$\ze$ of~$\gS_\Om$ can be defined by a path
$\ga \col [0,1] \to \C$ such that $\ga(0) \in \D_{\rho(\Om)}$,
$\ga\big((0,1)\big) \cap \big(\Om\cup\{0\}\big) = \emptyset$ and
$\ga(1) \notin\Om$;
if $\ga(1) \neq 0$, then the situation is explicitly covered by this article;
if $\ga(1) = 0$, then we can still apply our results to the neighbourhing points
and make use of the maximum principle.
\end{rem}

%%%%%%%%%%%%%%%%%%%%%%%%%%%%%%%%%%%%%%%%%%%%%%%%%%%%%
%%%%%%%%%%%%%%%%%%%%%%%%%%%%%%%%%%%%%%%%%%%%%%%%%%%%%

\section{Application to nonlinear operations with $\Om$-resurgent series}
\label{secNLappli}

%%%%%%%%%%%%%%%%%%%%%%%%%%%%%%%%%%%%%%%%%%%%%%%%%%%%%
%%%%%%%%%%%%%%%%%%%%%%%%%%%%%%%%%%%%%%%%%%%%%%%%%%%%%

\subsection{Fr\'echet algebra structure on~$\ti\gR_\Om$}

%%%%%%%%%%%%%%%%%%%%%%%%%%%%%%%%%%%%%%%%%%%%%%%%%%%%%
%%%%%%%%%%%%%%%%%%%%%%%%%%%%%%%%%%%%%%%%%%%%%%%%%%%%%

Recall that $\Om$ is a closed discrete subset of~$\C$ which contains~$0$ and is stable
under addition.
The space of $\Om$-resurgent series is 
\[
\ti\gR_\Om = \C \oplus \cB\ii(\hat\gR_\Om).
\]
As a vector space, it is isomorphic to $\C\times\gO(\gS_\Om)$.
We now define seminorms on~$\ti\gR_\Om$ which will ease the exposition.

%%%%%%%%%%%%%%%%%%%%%%%%%%%%%%%%%%%%%%%%%%%%%%%%%%%%%

\begin{Def}
Let $K \subset \gS_\Om$ be compact.
%
% Let $\cK = \cK_{\de,L}(\Om)$ be any of the compact subsets of~$\gS_\Om$ 
% defined by~\eqref{eqdefLdeL}.
%
We define the seminorm $\norm{\,\cdot\,}_K \col \ti\gR_\Om \to \R^+$ by
\[
\ti\phi \in \ti\gR_\Om \mapsto
\norm*{\ti\phi}_K \defeq \abs{c} + \max_K\abs{\hat\ph},
\]
where $\ti\phi = c + \cB\ii\hat\ph$, $c\in \C$, $\hat\ph \in \hat\gR_\Om$.
\end{Def}

%%%%%%%%%%%%%%%%%%%%%%%%%%%%%%%%%%%%%%%%%%%%%%%%%%%%%

Choosing $K_N = \cK_{\de_N,L_N}(\Om)$, $N\in\N^*$, with any pair of sequences
$\de_N \downarrow 0$ and $L_N \uparrow\infty$
(so that $\gS_\Om$ is the increasing union of the compact sets~$K_N$),
we get a countable family of seminorms which defines a structure of Fr\'echet
space on~$\ti\gR_\Om$.
A direct consequence of Theorem~\ref{thmboundconvga} is the continuity of the
Cauchy product for this Fr\'echet structure. More precisely:

%%%%%%%%%%%%%%%%%%%%%%%%%%%%%%%%%%%%%%%%%%%%%%%%%%%%%

\begin{thm}	\label{thmFrechetAlg}
For any~$K$ there exist $K'\supset K$ and $C>0$ such that, for any $n\ge r\ge0$,
\beglabel{ineqGenProdrwct}
\norm*{\ti\phi_1 \cdots \ti\phi_n}_K \le \frac{C^n }{r!}
\norm*{\ti\phi_1}_{K'} \cdots \norm*{\ti\phi_n}_{K'}
\elabel
for every sequence $(\ti\phi_1,\ldots,\ti\phi_n)$ of $\Om$-resurgent series, $r$
of which have no constant term.

In particular, $\ti\gR_\Om$ is a Fr\'echet algebra.
\end{thm}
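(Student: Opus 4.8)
I would derive Theorem~\ref{thmFrechetAlg} directly from Theorem~\ref{thmboundconvga} by splitting each resurgent series into its constant part and its ``Borel part''. Write $\ti\phi_j = c_j + \cB\ii\hat\ph_j$ with $c_j\in\C$ and $\hat\ph_j\in\hat\gR_\Om$, so that $\norm{\ti\phi_j}_{K'} = \abs{c_j} + \max_{K'}\abs{\hat\ph_j}$. The product $\ti\phi_1\cdots\ti\phi_n$ expands, by distributivity, as a sum over subsets $S\subseteq\{1,\ldots,n\}$ of the terms $\big(\prod_{j\notin S} c_j\big)\,\cB\ii\big(\mathop{\text{\Large$*$}}_{j\in S}\hat\ph_j\big)$, where an empty convolution is interpreted as the unit $\delta$ of $(\hat\gR_\Om,*)$, i.e.\ the term with $S=\emptyset$ contributes $\prod_j c_j$ to the constant part. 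Hence the constant part of the product is $\prod_j c_j$ and its Borel part is $\sum_{S\neq\emptyset}\big(\prod_{j\notin S}c_j\big)\mathop{\text{\Large$*$}}_{j\in S}\hat\ph_j$.

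First I would fix $K$; choosing $\de,L>0$ with $\de<\rho(\Om)$ and $K\subseteq\cK_{\de,L}(\Om)$, I apply Theorem~\ref{thmboundconvga} to get constants $C_0\defeq\rho(\Om)\ee^{3+6L/\de}$, $\de'$, $L'$ as in~\eqref{eqdefconstants}; set $K'\defeq\cK_{\de',L'}(\Om)\supseteq K$ (enlarging $K'$ if necessary so that $K'\supseteq K$, which it is since $\de'<\de$ and $L'>L$), and set $M_j\defeq\max_{K'}\abs{\hat\ph_j}$, $a_j\defeq\abs{c_j}$, so $\norm{\ti\phi_j}_{K'}=a_j+M_j$. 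For a nonempty $S$ with $\card S=s$, Theorem~\ref{thmboundconvga} gives $\max_K\abs{\mathop{\text{\Large$*$}}_{j\in S}\hat\ph_j}\le\frac{2}{\de}\cdot\frac{C_0^{\,s}}{s!}\prod_{j\in S}M_j$. Therefore
\[
\norm*{\ti\phi_1\cdots\ti\phi_n}_K \le \prod_{j=1}^n a_j + \frac{2}{\de}\sum_{\emptyset\neq S\subseteq\{1,\ldots,n\}} \frac{C_0^{\,\card S}}{(\card S)!}\Big(\prod_{j\notin S}a_j\Big)\Big(\prod_{j\in S}M_j\Big).
\]
Now I would bound $\frac{1}{(\card S)!}$ from above, using the hypothesis that $r$ of the $\ti\phi_j$ have no constant term: for those $r$ indices $a_j=0$, so only subsets $S$ containing all of them give a nonzero term, whence $\card S\ge r$ and $\frac{1}{(\card S)!}\le\frac{1}{r!}$. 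Pulling this out, the double sum is $\le\frac{1}{r!}\sum_{S}C_0^{\card S}\prod_{j\notin S}a_j\prod_{j\in S}M_j = \frac{1}{r!}\prod_{j=1}^n(a_j+C_0 M_j)$ by the binomial/distributive identity. Using $a_j+C_0M_j\le\max(1,C_0)(a_j+M_j)=\max(1,C_0)\norm{\ti\phi_j}_{K'}$ and $\prod_j a_j\le\prod_j\max(1,C_0)\norm{\ti\phi_j}_{K'}$, and absorbing the factor $\frac{2}{\de}+1$ and $\max(1,C_0)^n$ into a single constant $C\defeq\big(1+\tfrac{2}{\de}\big)\max(1,C_0)$ (noting $\big(1+\tfrac2\de\big)^n\le\text{(const)}^n$ only if we enlarge, so more carefully take $C\defeq\max\{1,C_0\}\cdot\max\{1,1+\tfrac2\de\}$ and note the $r!$ only helps), one obtains~\eqref{ineqGenProdrwct}. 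The ``In particular'' clause is then immediate: taking $n=2$, $r=0$ gives continuity of multiplication $\norm{\ti\phi_1\ti\phi_2}_K\le C^2\norm{\ti\phi_1}_{K'}\norm{\ti\phi_2}_{K'}$ for every seminorm in the defining family, which is exactly the statement that $\ti\gR_\Om$ is a Fr\'echet algebra.

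\textbf{Main obstacle.} There is no deep obstacle — Theorem~\ref{thmboundconvga} does all the analytic work — but the delicate bookkeeping point is the role of $r$: one must notice that the $r$ vanishing constant terms force $\card S\ge r$ in every surviving term, so that replacing $\frac{1}{(\card S)!}$ by $\frac{1}{r!}$ is legitimate, and then recognize the resulting sum as a product via the binomial identity $\sum_{S}\prod_{j\notin S}a_j\prod_{j\in S}(C_0M_j)=\prod_j(a_j+C_0M_j)$. The only other care needed is to choose the final constant $C$ uniformly (independent of $n$ and of the $\ti\phi_j$), which works precisely because $C_0$, $\de$, $\de'$, $L'$ from Theorem~\ref{thmboundconvga} are $n$-independent; the ``$\frac{2}{\de}$'' prefactor and the constant term $\prod a_j$ are absorbed by a harmless increase of the base constant.
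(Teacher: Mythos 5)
Your proposal is correct and follows essentially the same route as the paper: decompose each $\ti\phi_j$ into constant term plus Borel part, expand the product over subsets, invoke Theorem~\ref{thmboundconvga} on each convolution factor, observe that the $r$ vanishing constant terms force every surviving subset to have cardinality $\ge r$ so that $1/(\card S)!\le 1/r!$, and resum via the distributive identity. The only cosmetic difference is that the paper absorbs the $2/\de$ prefactor into a single constant $C\ge1$ from the outset (inequality~\eqref{ineqWtcph}) rather than carrying it along, and handles the $r=0$ constant term $c_1\cdots c_n$ exactly as you do.
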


%%%%%%%%%%%%%%%%%%%%%%%%%%%%%%%%%%%%%%%%%%%%%%%%%%%%%

\begin{proof}
Let us fix~$K$ compact and choose $\de,L>0$ so that $K \subset \cK_{\de,L}(\Om)$.
Let $\de',L'$ be as in~\eqref{eqdefconstants} and $K' \defeq
\cK_{\de',L'}(\Om)$.
According to Theorem~\ref{thmboundconvga}, we can choose $C \ge 1$ large enough so that
for any $m\ge1$ and $\ti\ph_1,\ldots,\ti\ph_m \in \cB\ii(\hat\gR_\Om)$, 
\beglabel{ineqWtcph} 
\norm*{\ti\ph_1\cdots\ti\ph_m}_K \le \frac{C^m}{m!} 
\norm*{\ti\ph_1}_{K'} \cdots \norm*{\ti\ph_m}_{K'}.
\elabel

Let $n\ge r$ and and $s\defeq n-r$. 
Given $n$ resurgent series among which~$r$ have no constant term, we can label
them so that 
\[
\ti\phi_1 = c_1 + \ti\ph_1,\; \ldots,\; \ti\phi_s = c_s + \ti\ph_s, \; \;
\ti\phi_{s+1} = \ti\ph_{s+1},\; \ldots,\;\ti\phi_n = \ti\ph_n,
\]
with 
$c_1,\ldots,c_s \in \C$ and $\ti\ph_1,\ldots,\ti\ph_n \in \cB\ii(\hat\gR_\Om)$.
Then $\ti\phi_1 \cdots \ti\phi_n = c + \ti\psi$ with
\[
\ti\psi = \sum_I
c_{i_1} \cdots c_{i_p} 
\ti\ph_{j_1} \cdots \ti\ph_{j_q}
\ti\ph_{s+1} \cdots \ti\ph_{n}
\in \cB\ii(\hat\gR_\Om),
\]
where 
either $r\ge1$, $c=0$ and the summation is over all subsets 
$I = \{ i_1, \ldots, i_p \}$ of $\{1,\ldots,s\}$ (of any cardinality~$p$), with 
$\{j_1,\ldots,j_q\} \defeq \{1,\ldots,s\}\setminus I$,
or $r=0$, $c = c_1 \cdots c_n$ and the summation is restricted to the
proper subsets of $\{1,\ldots,n\}$.
Using inequality~\eqref{ineqWtcph}, we get
$\norm*{\ti\phi_1 \cdots \ti\phi_n}_K \le$
\[
\sum_I \frac{C^{q+r}}{(q+r)!} \, \abs*{c_{i_1} \cdots c_{i_p}} \,
\norm*{\ti\ph_{j_1}}_{K'} \cdots \norm*{\ti\ph_{j_q}}_{K'}
\norm*{\ti\ph_{s+1}}_{K'} \cdots \norm*{\ti\ph_{n}}_{K'} 
\le \frac{C^n}{r!} \norm*{\ti\phi_1}_{K'} \cdots \norm*{\ti\phi_n}_{K'}
\]
(even if $r=0$, in which case we include $I=\{1,\ldots,n\}$ in the
summation and use $C\ge1$).

The continuity of the multiplication in~$\ti\gR_\Om$ follows, as a particular
case when $n=2$.
\end{proof}

%%%%%%%%%%%%%%%%%%%%%%%%%%%%%%%%%%%%%%%%%%%%%%%%%%%%%

\begin{rem}	\label{remactionddzcB}
$\ti\gR_\Om$ is even a differential Fr\'echet algebra since $\frac{\dd\,}{\dd z}$
induces a continuous derivation of~$\ti\gR_\Om$. 
Indeed, the very definition of~$\cB$ in~\eqref{eqdefcB} shows that
\[
\ti\phi = c + \cB\ii\hat\ph 
\quad\Longrightarrow\quad
\frac{\dd\ti\phi}{\dd z} = \cB\ii\hat\psi 
\ens\text{with}\; \hat\psi(\ze) = -\ze\hat\ph(\ze),
\]
whence $\norm*{\frac{\dd\ti\phi}{\dd z}}_K \le D(K) \norm*{\ti\phi}_K$
with $D(K) = \max_{\ze\in K} \abs{\ze}$.
\end{rem}

%%%%%%%%%%%%%%%%%%%%%%%%%%%%%%%%%%%%%%%%%%%%%%%%%%%%%
%%%%%%%%%%%%%%%%%%%%%%%%%%%%%%%%%%%%%%%%%%%%%%%%%%%%%

\subsection{Substitution and implicit resurgent functions}

%%%%%%%%%%%%%%%%%%%%%%%%%%%%%%%%%%%%%%%%%%%%%%%%%%%%%
%%%%%%%%%%%%%%%%%%%%%%%%%%%%%%%%%%%%%%%%%%%%%%%%%%%%%

\begin{Def}	\label{defCVresur}
For any $r\in\N^*$, we define $\ti\gR_\Om\{w_1,\ldots,w_r\}$ as the subspace of
$\ti\gR_\Om[[w_1,\ldots,w_r]]$ consisting of all formal power series
\[
\ti H = \sum_{\bk = (k_1,\ldots,k_r) \in \N^r} \ti H_{\bk}(z) \, w_1^{k_1} \cdots w_r^{k_r}
\]
with coefficients $\ti H_{\bk} = \ti H_{\bk}(z) \in \ti\gR_\Om$ such that,
for every compact $K\subset\gS_\Om$, there exist positive numbers $A_K,B_K$ such that
\beglabel{ineqHCVAB}
\norm*{\ti H_{\bk}}_K \le A_K \, B_K^{\abs{\bk}}
\elabel
for all $\bk \in \N^r$ (with the notation $\abs{\bk} = k_1 + \cdots + k_r$).
\end{Def}

The idea is to consider formal series ``resurgent in~$z$ and
convergent in $w_1,\ldots,w_r$''.
We now show that one can substitute resurgent series in such a convergent
series.
Observe that $\ti\gR_\Om\{w_1,\ldots,w_r\}$ can be considered as a subspace of
$\C[[z\ii,w_1,\ldots,w_r]]$.

%%%%%%%%%%%%%%%%%%%%%%%%%%%%%%%%%%%%%%%%%%%%%%%%%%%%%

\begin{thm}	\label{thmSubst}
\begin{enumerate}[(i)]
\item
The space $\ti\gR_\Om\{w_1,\ldots,w_r\}$ is a subalgebra of $\C[[z\ii,w_1,\ldots,w_r]]$.
\item
Suppose that $\ti\ph_1,\ldots,\ti\ph_r \in \ti\gR_\Om$ have no constant term.
Then for any
$\ti H = \sum \ti H_{\bk} \, w_1^{k_1} \cdots w_r^{k_r} 
\in\ti\gR_\Om\{w_1,\ldots,w_r\}$,
the series 
\[
\ti H(\ti\ph_1,\ldots,\ti\ph_r) \defeq
\sum_{\bk\in\N^r} \ti H_{\bk}\, \ti\ph_1^{k_1} \cdots \ti\ph_r^{k_r} 
\in \C[[z\ii]]
\]
is convergent in $\ti\gR_\Om$ and, for every compact $K\subset \gS_\Om$, there
exist a compact $K'\supset K$ and a constant $C>0$ so that
\[
\norm*{\ti H(\ti\ph_1,\ldots,\ti\ph_r)}_K \le C A_{K'} \,
\ee^{ C B_{K'} \big( \norm*{\ti\ph_1}_{K'} + \cdots + \norm*{\ti\ph_r}_{K'} \big) }
\]
(with notations similar to those of Definition~\ref{defCVresur} for $A_{K'},B_{K'}$).
\item
The map $\ti H \in\ti\gR_\Om\{w_1,\ldots,w_r\} \mapsto 
\ti H(\ti\ph_1,\ldots,\ti\ph_r) \in \ti\gR_\Om$
is an algebra homomorphism.
\end{enumerate}
\end{thm}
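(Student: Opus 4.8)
\textbf{Proof plan for Theorem~\ref{thmSubst}.}
The whole statement is an application of the Fr\'echet-algebra estimate~\eqref{ineqWtcph} (equivalently Theorem~\ref{thmFrechetAlg}), combined with the convergence estimates~\eqref{ineqHCVAB} defining $\ti\gR_\Om\{w_1,\ldots,w_r\}$. First I would dispose of part~(i): given $\ti H, \ti G \in \ti\gR_\Om\{w_1,\ldots,w_r\}$, their product in $\C[[z\ii,w_1,\ldots,w_r]]$ has coefficients $(\ti H\ti G)_{\bk} = \sum_{\bk' + \bk'' = \bk} \ti H_{\bk'} \ti G_{\bk''}$, which lie in $\ti\gR_\Om$ since $\ti\gR_\Om$ is an algebra (Theorem~\ref{thmFrechetAlg}); applying the submultiplicativity $\norm*{\ti\phi_1\ti\phi_2}_K \le C \norm*{\ti\phi_1}_{K'}\norm*{\ti\phi_2}_{K'}$ with $K' = \cK_{\de',L'}(\Om)$ and bounding $\sum_{\bk'+\bk''=\bk} 1 = \binom{\abs{\bk}+r}{r} \le 2^{\abs{\bk}+r}$ gives $\norm*{(\ti H\ti G)_{\bk}}_K \le C\,2^r A^H_{K'} A^G_{K'}\,(2 B^H_{K'} B^G_{K'})^{\abs{\bk}}$ (after enlarging $B$ to the max of $B^H_{K'},B^G_{K'}$), which is of the required form~\eqref{ineqHCVAB}. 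Closure under addition is immediate, so $\ti\gR_\Om\{w_1,\ldots,w_r\}$ is a subalgebra.

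For part~(ii), fix $K$ and set $K' \defeq \cK_{\de',L'}(\Om)$, with $C\ge1$ the constant of~\eqref{ineqWtcph}. The key point is that each $\ti\ph_i$ has no constant term, i.e.\ $\ti\ph_i \in \cB\ii(\hat\gR_\Om)$, so the monomial $\ti\ph_1^{k_1}\cdots\ti\ph_r^{k_r}$ is a product of $\abs{\bk}$ factors all lying in $\cB\ii(\hat\gR_\Om)$; then $\ti H_{\bk}\,\ti\ph_1^{k_1}\cdots\ti\ph_r^{k_r}$ is a product of $\abs{\bk}+1$ resurgent series, and even if $\ti H_{\bk}$ has a constant term we may split $\ti H_{\bk} = c_{\bk} + \hat{H}_{\bk}$ and apply Theorem~\ref{thmFrechetAlg} with $r = \abs{\bk}$ (the number of factors without constant term); alternatively, apply~\eqref{ineqWtcph} directly to the $\abs{\bk}$ factors $\ti\ph_i$ and then multiply by $\ti H_{\bk}$ using submultiplicativity once more. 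Either way one gets, for some constant (absorbed into $C$),
\[
\norm*{\ti H_{\bk}\,\ti\ph_1^{k_1}\cdots\ti\ph_r^{k_r}}_K \le \frac{C^{\abs{\bk}+1}}{\abs{\bk}!}\,\norm*{\ti H_{\bk}}_{K'}\,\prod_{i=1}^{r}\norm*{\ti\ph_i}_{K'}^{k_i} \le \frac{C^{\abs{\bk}+1}}{\abs{\bk}!}\,A_{K'}\,B_{K'}^{\abs{\bk}}\,\prod_{i=1}^{r}\norm*{\ti\ph_i}_{K'}^{k_i},
\]
using~\eqref{ineqHCVAB} for the last step. Summing over $\bk\in\N^r$ and grouping by total degree $m = \abs{\bk}$ (there are $\binom{m+r-1}{r-1}$ terms, and $\sum_{\abs{\bk}=m}\prod_i t_i^{k_i} = (t_1+\cdots+t_r)^m$ for $t_i = \norm*{\ti\ph_i}_{K'}$), one obtains a bound by
\[
C A_{K'}\sum_{m\ge0} \frac{\big(C B_{K'}(\norm*{\ti\ph_1}_{K'}+\cdots+\norm*{\ti\ph_r}_{K'})\big)^m}{m!} = C A_{K'}\,\ee^{C B_{K'}(\norm*{\ti\ph_1}_{K'}+\cdots+\norm*{\ti\ph_r}_{K'})},
\]
after absorbing the factor $\binom{m+r-1}{r-1}\le 2^{m+r-1}$ into a further enlargement of $C$. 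This simultaneously proves normal convergence of $\sum_{\bk} \ti H_{\bk}\,\ti\ph_1^{k_1}\cdots\ti\ph_r^{k_r}$ for each seminorm $\norm*{\cdot}_K$ — hence, since $\ti\gR_\Om$ is a Fr\'echet space (complete), convergence of the series in $\ti\gR_\Om$ — and gives the claimed estimate.

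Part~(iii) is then formal: evaluation at $(\ti\ph_1,\ldots,\ti\ph_r)$ is visibly $\C$-linear, and it is multiplicative because on the level of $\C[[z\ii,w_1,\ldots,w_r]]$ substitution of power series $w_i \mapsto \ti\ph_i$ (with $\ti\ph_i$ having zero constant term, so the substitution is well-defined $(z\ii)$-adically) is an algebra homomorphism, and by part~(ii) both sides of $(\ti H\ti G)(\ti\ph) = \ti H(\ti\ph)\,\ti G(\ti\ph)$ lie in $\ti\gR_\Om$ and agree as formal series in $z\ii$, hence are equal in $\ti\gR_\Om \subset \C[[z\ii]]$. The only genuine obstacle is bookkeeping: one must check that a single compact $K'$ and a single constant $C$ work uniformly in $n$ (respectively in $\bk$), but this is exactly the $n$-independence built into Theorem~\ref{thmboundconvga} and hence into~\eqref{ineqWtcph}, so no new difficulty arises — the repeated ``enlarge $C$ to absorb $2^m$'' steps are harmless because all the combinatorial factors are at most exponential in the number of factors while the main estimate carries a $1/m!$.
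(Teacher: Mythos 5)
Your proposal is correct and takes essentially the same approach as the paper: the key step is applying the product estimate \eqref{ineqGenProdrwct} to the $\abs{\bk}+1$ factors $\ti H_{\bk},\ti\ph_1,\ldots,\ti\ph_r$ (the $\abs{\bk}$ constant-term-free copies of the $\ti\ph_i$ supplying the $1/\abs{\bk}!$), then summing over $\bk$ to get the exponential bound, with (iii) reduced to the formal-series substitution homomorphism; you even supply a proof of (i), which the paper leaves as an exercise. The only blemish is the claimed identity $\sum_{\abs{\bk}=m}t_1^{k_1}\cdots t_r^{k_r}=(t_1+\cdots+t_r)^m$, which is false without the multinomial coefficients, but the inequality $\le$ is what your estimate actually requires and it does hold, so the argument stands.
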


%%%%%%%%%%%%%%%%%%%%%%%%%%%%%%%%%%%%%%%%%%%%%%%%%%%%%

\begin{proof}
The proof of the first statement is left as an exercise.
Observe that the series of formal series
\[
\ti\chi = \sum_{\bk\in\N^r} \ti H_{\bk}\, \ti\ph_1^{k_1} \cdots \ti\ph_r^{k_r} 
\]
is formally convergent%
\footnote{%
A family of formal series in $\C[[z\ii]]$ is formally summable if it has only finitely many
members of order $\le N$ for every $N\in\N$. 
Notice that if a formally summable family is made up of $\Om$-resurgent series
and is summable for the semi-norms $\norm*{\cdot}_K$, then the formal sum
in~$\C[[z\ii]]$ and the sum in~$\ti\gR_\Om$ coincide 
(because the Borel transform of the formal sum is nothing but the Taylor series
at~$0$ of the Borel transform of the sum in~$\ti\gR_\Om$).
}
in $\C[[z\ii]]$, because $\ti H_{\bk}\, \ti\ph_1^{k_1} \cdots \ti\ph_r^{k_r}$
has order $\ge \abs{\bk}$; this is in fact a particular case of composition of
formal series and the fact that the map
\[ \ti H \in\ti\C[[z\ii,w_1,\ldots,w_r]] \mapsto 
\ti H(\ti\ph_1,\ldots,\ti\ph_r) \in \C[[z\ii]] \]
is an algebra homomorphism is well-known.
The last statement will thus follow from the second one.

Let us fix $K\subset \gS_\Om$ compact.
We first choose $K'$ and~$C$ as in Theorem~\ref{thmFrechetAlg}, and then $A = A_{K'}$,
$B = B_{K'}$ so that~\eqref{ineqHCVAB} holds relatively to~$K'$.
For each $\bk \in \N^r$, inequality~\eqref{ineqGenProdrwct} yields
\[
\norm*{\ti H_{\bk}\, \ti\ph_1^{k_1} \cdots \ti\ph_r^{k_r} }_K \le
\frac{C^{\abs{\bk}+1}}{\abs{\bk}!} \norm*{\ti H_{\bk}}_{K'} 
\norm*{\ti\ph_1}_{K'}^{k_1} \cdots \norm*{\ti\ph_r}_{K'}^{k_r} 
\le C A \frac{(C B)^{\abs{\bk}}}{\abs{\bk}!} 
\norm*{\ti\ph_1}_{K'}^{k_1} \cdots \norm*{\ti\ph_r}_{K'}^{k_r}
\]
and the conclusion follows easily.
\end{proof}

%%%%%%%%%%%%%%%%%%%%%%%%%%%%%%%%%%%%%%%%%%%%%%%%%%%%%

As an illustration, for $\ti\phi = c + \ti\ph$ with $c\in\C$ and $\ti\ph \in
\cB\ii(\hat\gR_\Om)$, we have
\[
\exp(\ti\phi) = \ee^c \, \sum_{n\ge0} \frac{1}{n!} \ti\ph^n \in \ti\gR_\Om
\]
and, if moreover $c\neq0$,
\[
{1}/{\ti\phi} = \sum_{n\ge0} (-1)^n c^{-n-1} \ti\ph^n \in \ti\gR_\Om.
\]

\begin{rem}
  An example of application of Theorem~\ref{thmSubst} is provided by
  the exponential of the Stirling series~$\ti\ph\Stir$ mentioned in
  the introduction:
we obtain the $2\pi\I\Z$-resurgence of the divergent series $\exp(\ti\ph\Stir)$
which, according to the refined Stirling formula, is the asymptotic expansion of
$\frac{1}{\sqrt{2\pi}} z^{\demi-z} \ee^z \, \Ga(z)$
(in fact the formal series $\exp(\ti\ph\Stir)$ is $1$-summable in the
directions of $( -\frac{\pi}{2}, \frac{\pi}{2} )$, and
this function is its Borel-Laplace sum in the sector $-\pi < \arg z < \pi$;
see Section~\ref{secNLsumma}).
\end{rem}

%%%%%%%%%%%%%%%%%%%%%%%%%%%%%%%%%%%%%%%%%%%%%%%%%%%%%

We now show an implicit function theorem for resurgent series.
\begin{thm}	\label{thmImplicit}
Let $F(x,y) \in \C[[x,y]]$ be such that $F(0,0)=0$ and $\pa_y F(0,0)\neq0$, and
call~$\ph(x)$ the unique solution in $x\C[[x]]$ of the equation
\beglabel{eqimplicitph}
F\big( x, \ph(x) \big) = 0.
\elabel
Let $\ti F(z,y) \defeq F(z\ii,y) \in \C[[z\ii,y]]$ and
$\ti\ph(z) \defeq \ph(z\ii) \in z\ii\C[[z\ii]]$, so that $\ti\ph$ is implicitly
defined by the equation $\ti F\big( z, \ti\ph(z) \big) = 0$.
Then
\[
\ti F(z,y) \in \ti\gR_\Om\{y\}
\quad\Longrightarrow\quad
\ti\ph(z) \in \ti\gR_\Om.
\]
\end{thm}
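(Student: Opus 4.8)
The plan is to reduce the implicit function statement to the substitution theorem (Theorem~\ref{thmSubst}) via a fixed-point iteration that converges in the Fr\'echet algebra $\ti\gR_\Om$. First I would normalize the problem: since $\pa_y F(0,0)\neq 0$, divide the equation $\ti F(z,y)=0$ by this constant and rearrange it into the form $y = \ti G(z,y)$, where $\ti G(z,y) = y - \big(\pa_y F(0,0)\big)\ii \ti F(z,y)$. One checks that $\ti G \in \ti\gR_\Om\{y\}$ (by part~(i) of Theorem~\ref{thmSubst}, which says $\ti\gR_\Om\{y\}$ is an algebra, together with the fact that constants and $y$ itself lie in it), that $\ti G(z,0) \in z\ii\C[[z\ii]]\cap\ti\gR_\Om$ has no constant term (since $F(0,0)=0$), and that $\pa_y\ti G(z,0)$ has no constant term either (since the constant term of $\pa_y\ti F(z,0)$ is exactly $\pa_yF(0,0)$, which we divided out). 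Writing $\ti G(z,y) = \sum_{k\ge0}\ti G_k(z)\,y^k$, these two facts say $\ti G_0$ and $\ti G_1$ both have vanishing constant term.

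Next I would set up the iteration $\ti\ph^{(0)} \defeq 0$ and $\ti\ph^{(m+1)} \defeq \ti G\big(z,\ti\ph^{(m)}\big) = \sum_{k\ge0}\ti G_k\,(\ti\ph^{(m)})^k$. By part~(ii) of Theorem~\ref{thmSubst} each $\ti\ph^{(m)}$ is a well-defined element of $\ti\gR_\Om$ with no constant term. On the purely formal side, the standard implicit function theorem in $\C[[z\ii]]$ guarantees that $\ti\ph^{(m)}$ converges coefficient-by-coefficient to $\ti\ph$; more precisely $\ti\ph^{(m)} - \ti\ph$ has order (in $z\ii$) tending to $\infty$ with $m$. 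So $\ti\ph$ is the only candidate limit, and the real work is to show convergence in the topology of $\ti\gR_\Om$, i.e. that $(\ti\ph^{(m)})_m$ is Cauchy for each seminorm $\norm{\cdot}_K$.

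For the quantitative estimate, fix a compact $K\subset\gS_\Om$ and use Theorem~\ref{thmFrechetAlg} to obtain $K'\supset K$ and $C>0$ governing products in the $K$--$K'$ pair; then choose $A=A_{K'}, B=B_{K'}$ from~\eqref{ineqHCVAB} for $\ti G$ relative to $K'$. The key point is that because $\ti G_0$ and $\ti G_1$ have no constant term, the map $\Phi\col\ti\psi \mapsto \ti G(z,\ti\psi)$ is a contraction for $\norm{\cdot}_K$ on a small enough ball: writing $\Phi(\ti\psi)-\Phi(\ti\psi') = \sum_{k\ge1}\ti G_k\big((\ti\psi)^k-(\ti\psi')^k\big)$ and expanding the difference of $k$-th powers, each term contains at least one factor $\ti\psi-\ti\psi'$; the $k=1$ term is $\ti G_1\cdot(\ti\psi-\ti\psi')$, and since $\ti G_1$ has no constant term one can control $\norm{\ti G_1\cdot(\ti\psi-\ti\psi')}_K$ — here one needs a lemma (immediate from the reasoning behind Theorem~\ref{thmFrechetAlg}) saying that multiplying by an element of $\cB\ii(\hat\gR_\Om)$ with no constant term gains a factor that can be made $<1$ by shrinking, or alternatively one absorbs the $k=1$ term into the iteration by noting that the relevant operator norm involves $\norm{\ti G_1}$ which has no constant term and hence a controllably small $K$-seminorm after one more composition step. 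Summing the geometric-type series in $k$ (using $A, B, C$ and the bound on $\norm{\ti\ph^{(m)}}_{K'}$, which is uniform in $m$ by an a priori estimate of the same flavour as in the proof of Theorem~\ref{thmSubst}(ii)) yields $\norm{\ti\ph^{(m+1)}-\ti\ph^{(m)}}_K \le \kappa\,\norm{\ti\ph^{(m)}-\ti\ph^{(m-1)}}_K$ with $\kappa<1$, so the sequence is Cauchy in $\ti\gR_\Om$. Its limit is a resurgent series $\ti\psi$ with $\ti\psi = \ti G(z,\ti\psi)$, hence $\ti F(z,\ti\psi)=0$; by uniqueness of the formal solution, $\ti\psi = \ti\ph$, and therefore $\ti\ph \in \ti\gR_\Om$.

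I expect the main obstacle to be the contraction estimate — specifically, handling the $k=1$ term $\ti G_1\cdot(\ti\psi-\ti\psi')$, since $\ti G_1$ need not have small seminorm a priori; one resolves this by exploiting that $\ti G_1$ has no constant term, i.e. $\ti G_1\in\cB\ii(\hat\gR_\Om)$, and that multiplication by such an element, composed with one further application of $\Phi$, can be made contractive after passing to a sufficiently large auxiliary compact set and rescaling. A clean way to organize this is to prove the contraction directly for the second iterate $\Phi\circ\Phi$, or to work with the shifted unknown; either way the bookkeeping is routine once the vanishing-constant-term structure of $\ti G_0$ and $\ti G_1$ is in place, and the heavy analytic lifting has already been done in Theorem~\ref{thmboundconvga} and its corollaries.
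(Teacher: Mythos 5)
Your reduction to a fixed-point equation $y=\ti G(z,y)$, the observation that $\ti G_0$ and $\ti G_1$ have no constant term, and the formal (coefficientwise) convergence of the iterates are all fine. The genuine gap is in the contraction step, and it is not the one you flag. The product estimate of Theorem~\ref{thmFrechetAlg} has the form $\norm{\ti\phi_1\cdots\ti\phi_n}_K\le\frac{C^n}{r!}\norm{\ti\phi_1}_{K'}\cdots\norm{\ti\phi_n}_{K'}$ with $K'=\cK_{\de',L'}(\Om)$ \emph{strictly larger} than $K$ (the constants~\eqref{eqdefconstants} shrink $\de$ exponentially and enlarge $L$); there is no version of the inequality with the same compact on both sides, because the deformation of the integration simplex needs a margin. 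Consequently the best one can extract is $\norm{\Phi(\ti\psi)-\Phi(\ti\psi')}_K\le\kappa\,\norm{\ti\psi-\ti\psi'}_{K'}$, and iterating this drives you through an infinite increasing chain $K\subset K'\subset K''\subset\cdots$ exhausting $\gS_\Om$, with constants that do not close up; since $\norm{\cdot}_K\le\norm{\cdot}_{K'}$ goes the wrong way, the inequality you assert, with $\norm{\cdot}_K$ on both sides, does not follow. Passing to the second iterate $\Phi\circ\Phi$ or to an ``auxiliary compact'' only doubles the loss of domain. A second, lesser, problem: even ignoring the domain loss, the would-be contraction constant contains $\norm{\ti G_1}_{K'}$, which is in no way small --- having no constant term says nothing about the size of a seminorm --- so $\kappa<1$ is not available from a one-step estimate; the factorial gain $1/r!$ that the vanishing constant term buys only appears for high powers $\ti G_1^r$, which a one-step contraction never sees.

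The paper's proof avoids iteration altogether. After normalizing $\pa_yF(0,0)=-1$ and writing $F=-y+f(x)+R(x,y)$ with $R(x,y)=\sum_{n\ge1}R_n(x)y^n$, $R_1(0)=0$, it inverts $y\mapsto y-R(x,y)$ \emph{explicitly} by the Lagrange reversion formula, getting $\ph=H(x,f(x))$ with $H(x,y)=\sum H_m(x)y^m$ and each $H_m$ written as an explicit sum over tuples $(r;j_1,\ldots,j_s)$ of monomials $R_1^r R_{j_1}\cdots R_{j_s}$. Each such monomial is then estimated by a \emph{single} application of Theorem~\ref{thmFrechetAlg}, so the compact is enlarged only once ($K\to K'$), and the factor $\frac{1}{r!}$ attached to the $r$ constant-term-free factors $\ti R_1$ makes the $r$-summation converge, yielding $\norm{\ti H_m}_K\le\al\be^{m-1}$, i.e.\ $\ti H\in\ti\gR_\Om\{y\}$; Theorem~\ref{thmSubst} then finishes the proof. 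If you wish to keep your fixed-point framing you would have to unroll the iteration completely and estimate the resulting explicit series in one shot --- which is precisely the Lagrange-formula computation.
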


%%%%%%%%%%%%%%%%%%%%%%%%%%%%%%%%%%%%%%%%%%%%%%%%%%%%%

\begin{proof}
Without loss of generality we can assume $\pa_y F(0,0) = -1$ and write
\[
F(x,y) = -y + f(x) + R(x,y)
\]
with $f(x) = F(x,0) \in x\C[[x]]$ and a quadratic remainder
\[
R(x,y) =\sum_{n\ge1} R_n(x) y^n, \qquad
R_n(x) \in \C[[x]], \quad R_1(0) = 0.
\]

When viewed as formal transformation in~$y$, the formal series 
$\th(x,y) \defeq y - R(x,y)$ is invertible, with inverse given by the Lagrange
reversion formula: the series
\[
H(x,y) \defeq y + 
\sum_{k\ge1} \frac{1}{k!} \pa_y^{k-1}(R^k)(x,y)
\]
is formally convergent (the order of $\pa_y^{k-1}(R^k)$ is at least $k+1$
because the order of~$R$ is at least~$2$) and
satisfies $\th\big( x, H(x,y) \big) = y$.
Rewriting~\eqref{eqimplicitph} as $\th\big( x,\ph(x) \big) = f(x)$, we get
$\ph(x) = H\big( x,f(x) \big)$.

Now, the $y$-expansion of~$H$ can be written
$H(x,y) = \sum_{m\ge1} H_m(x) y^m$ with
\[
H_1 = (1-R_1)\ii  \quad\text{and}\quad
H_m = \sum_{k\ge1} \frac{(m+k-1)!}{m!\,k!} \sum_{\bn} R_{n_1} \cdots R_{n_k}
\ens\text{for $m\ge2$,}
\]
where the last summation is over all $k$-tuples of integers $\bn = (n_1,\ldots,n_k)$ such that
$n_1,\ldots,n_k \ge 1$ and $n_1 + \cdots + n_k = m+k-1$.
For $m\ge2$, grouping together the indices~$i$ such that $n_i = 1$, we get an expression
of~$H_m$ as a formally convergent series in $\C[[x]]$:
\beglabel{eqdefHm}
H_m = \sum_{r\ge0} \; \sum_{s\ge1}
\frac{(m+r+s-1)!}{m!\,r! \, s!} \sum_{\bj} R_1^r R_{j_1} \cdots R_{j_s},
\elabel
where the last summation is over all $s$-tuples of integers $\bj = (j_1,\ldots,j_s)$ such that
$j_1,\ldots,j_s \ge 2$ and $j_1 + \cdots + j_s = m+s-1$.
Observe that one must restrict oneself to $s \le m-1$ and that there
are $\binom{m-2}{s-1} \le 2^{m-2}$ summands in the $\bj$-summation.

Replacing $x$ by~$z\ii$, we get
\[
\ti\ph(z) = \ti H\big( z,\ti f(z) \big)
\]
with
$\ti f(z) \defeq f(z\ii) \in \ti\gR_\Om$ without constant term and
\[ \ti H(z,y) \defeq \sum_{m\ge1} \ti H_m(z) y^n, \qquad
\ti H_m(z) \defeq H_m(z\ii) \ens\text{for $m\ge1$.} \]
In view of Theorem~\ref{thmSubst} it is thus sufficient to check that $\ti H \in \ti\gR_\Om\{y\}$.

Let $K\subset\gS_\Om$ be compact.
Setting $\ti R_n(z) \defeq R_n(z\ii)$ for all $n\ge1$,
by Theorem~\ref{thmFrechetAlg} we can find $K'\supset K$ compact and $C>0$ such
that
$\norm*{\ti R_1^r \ti R_{j_1} \cdots \ti R_{j_s}}_K \le \frac{C^{r+s}}{r!}
\norm*{\ti R_1}_{K'}^r \norm*{\ti R_{j_1}}_{K'} \cdots \norm*{\ti R_{j_s}}_{K'}$.
Assuming $\ti F(z,y) \in \ti\gR_\Om\{y\}$, we can find $A,B>0$ such that
$\norm*{\ti R_n}_{K'} \le A B^n$ for all $n\ge1$.
Enlarging~$A$ if necessary, we can assume $3ABC \ge 1$.
We then see that the series~\eqref{eqdefHm} is convergent
in~$\ti\gR_\Om$:
for $m\ge2$,
\begin{align*}
\norm*{\ti H_m}_K & \le \sum_{r\ge0} \; \sum_{s=1}^{m-1}
\frac{(m+r+s-1)!}{m!\,r! \, s!} 
\frac{2^{m-2}C^{r+s}}{r!} A^{r+s} B^{m+r+s-1} \\[1ex]
& \le \frac{2^{m-2}}{m} \sum_{r\ge0} \frac{1}{r!} \sum_{s=1}^{m-1}
3^{m+r+s-1} (CA)^{r+s} B^{m+r+s-1} 
\le \dem (6B)^{m-1} \sum_{r\ge0} \frac{1}{r!} (3ABC)^{r+m-1},
\end{align*}
which is $\le \al \be^{m-1}$ with $\al = \dem\exp(3ABC)$ and $\be = 18 A B^2 C$. 
On the other hand, $\ti H_1 \in \ti\gR_\Om$ by Theorem~\ref{thmSubst}.
\end{proof}

%%%%%%%%%%%%%%%%%%%%%%%%%%%%%%%%%%%%%%%%%%%%%%%%%%%%%
%%%%%%%%%%%%%%%%%%%%%%%%%%%%%%%%%%%%%%%%%%%%%%%%%%%%%

\subsection{The group of resurgent tangent-to-identity diffeomorphisms}
\label{secgroupresurdiffeos}

%%%%%%%%%%%%%%%%%%%%%%%%%%%%%%%%%%%%%%%%%%%%%%%%%%%%%
%%%%%%%%%%%%%%%%%%%%%%%%%%%%%%%%%%%%%%%%%%%%%%%%%%%%%

One of the first applications by J.~\'Ecalle of his resurgence theory was
the iteration theory for tangent-to-identity local analytic diffeomorphisms
\cite[Vol.~2]{Eca81}.
In the language of holomorphic dynamics, this corresponds to a parabolic fixed
point in one complex variable, for which, classically, one introduces the Fatou
coordinates to describe the dynamics and to define the ``horn map''
\cite{Milnor}.
In the resurgent approach, one places the variable at infinity and deals with
formal diffeomorphisms:
starting from $F(w) = w + O(w^2) \in \C\{w\}$ or $\C[[w]]$, one gets 
$f(z) \defeq 1/F(1/z) = z + \sum_{m=0}^\infty a_m z^{-m} \in z + \C\{z\ii\}$ or
$z + \C[[z\ii]]$.
The set
\[
\ti\gG \defeq z + \C[[z\ii]]
\]
is a group for the composition law:
this is the group of formal tangent-to-identity diffeomorphisms.

Convergent diffeomorphisms form a subgroup $z + \C\{z\ii\}$.
In the simplest case, one is given a specific dynamical system 
$z \mapsto f(z) = z + \al + O(z\ii) \in z + \C\{z\ii\}$ with $\al\in\C^*$
and there is a formal conjugacy between~$f$ and the trivial dynamics $z \mapsto z+\al$, 
\ie the equation $\ti v\circ f = \ti v + \al$ admits a solution $\ti v\in \ti\gG$
(strictly speaking, an assumption is needed for this to be true, without which
one must enlarge slightly the theory to accept a logarithmic term in~$\ti v(z)$; we
omit the details here---see \cite{Eca81}, \cite{kokyu}).
One can give a direct proof \cite{EVinva} that $\ti v(z)-z$ is $\Om$-resurgent with $\Om =
{2\pi\I}{\al\ii}\Z$.
The inverse of~$\ti v$ is a solution~$\ti u$ of the difference equation $\ti u(z+\al) =
f\big(\ti u(z)\big)$
and the exponential of~$\ti v$ plays a role in \'Ecalle's ``bridge equation'' \cite{EVinvb}, which is
related to the \'Ecalle-Voronin classification theorem and to the horn map
(again, we refrain from giving more details here).

This may serve as a motivation for the following 
%
%%%%%%%%%%%%%%%%%%%%%%%%%%%%%%%%%%%%%%%%%%%%%%%%%%%%%

\begin{thm}	\label{thmGroup}
Assume that $\Om$ is a closed discrete subset of~$\C$ which contains~$0$ and is
stable under addition.
Then the $\Om$-resurgent tangent-to-identity diffeomorphisms
make up a subgroup 
\[
\ti\gG_\Om \defeq z + \ti\gR_\Om \subset \ti\gG,
\]
which contains $z+\C\{z\ii\}$.
\end{thm}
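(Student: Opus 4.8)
The plan is to show $\ti\gG_\Om$ is closed under composition and inversion. Write $f = z + \ti\ph$, $g = z + \ti\psi$ with $\ti\ph,\ti\psi \in \ti\gR_\Om$. The first step is to make sense of $g\circ f = z + \ti\ph + \ti\psi(z+\ti\ph)$ and check $\ti\psi(z+\ti\ph) \in \ti\gR_\Om$. Writing $\ti\psi = \sum_{m\ge 0} b_m z^{-m}$, we have formally $\ti\psi(z+\ti\ph) = \sum_{m\ge 0} b_m (z+\ti\ph)^{-m}$; since $\ti\ph \in \C[[z\ii]]$ has no constant term only if $f$ is tangent-to-identity at $\infty$ in the strong sense—but here $\ti\ph = a_0 + a_1 z\ii + \cdots$ may have a constant term $a_0$. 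The correct substitution is $\ti\psi(z + a_0 + \hat{\ti\ph})$ where $\hat{\ti\ph} = \ti\ph - a_0$ has no constant term; expanding each $(z+a_0)^{-m}(1 + \hat{\ti\ph}/(z+a_0))^{-m}$ is not directly of the form covered by Theorem~\ref{thmSubst} because $1/(z+a_0) \notin z\ii\C[[z\ii]]$ in general. So the cleaner route is to absorb the translation: set $\ti\psi_{a_0}(z) \defeq \ti\psi(z+a_0)$, observe $\ti\psi_{a_0} \in \C[[z\ii]]$ with the same constant term $b_0$ as $\ti\psi$, and—crucially—that $\ti\psi_{a_0} \in \ti\gR_\Om$, because translation by a constant $a_0$ corresponds on the Borel side to multiplication by $\ee^{-a_0\ze}$, which preserves $\hat\gR_\Om$ (entire multiplier, no new singularities). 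Then $g\circ f = z + a_0 + \ti\psi_{a_0}(z + \hat{\ti\ph})$ reduces the problem to substituting a resurgent series \emph{without constant term} into $\ti H(y) \defeq \ti\psi_{a_0}(z + y)$, viewed as an element of $\ti\gR_\Om\{y\}$; Taylor-expanding in $y$ gives $\ti H = \sum_{k\ge 0} \frac{1}{k!}(\tfrac{\dd}{\dd z})^k\ti\psi_{a_0}\cdot y^k$, and the coefficients satisfy the bound~\eqref{ineqHCVAB} for every compact $K$ by Remark~\ref{remactionddzcB} (the derivation $\frac{\dd}{\dd z}$ multiplies the norm by $D(K) = \max_K\abs{\ze}$, so $\norm*{\frac1{k!}(\tfrac{\dd}{\dd z})^k\ti\psi_{a_0}}_K \le \frac{D(K)^k}{k!}\norm*{\ti\psi_{a_0}}_K$, which is $\le A_K B_K^k$). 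Theorem~\ref{thmSubst}(ii) then applies to $\ti H(\hat{\ti\ph})$ and yields $g\circ f \in \ti\gG_\Om$.

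For inversion, given $f = z + \ti\ph \in \ti\gG_\Om$ we must find $g = z + \ti\psi \in \ti\gG_\Om$ with $g\circ f = z$, i.e. $\ti\psi(z+\ti\ph) = -\ti\ph$. As above, after absorbing the constant term of $\ti\ph$ this becomes a fixed-point problem of the form handled by the implicit function theorem. Concretely, one reduces to the equation $\ti\psi = -\ti\ph - \big(\ti\psi(z+\ti\ph) - \ti\psi(z)\big)$, where the correction term, Taylor-expanded in the increment, is a resurgent series depending on $\ti\psi$ and its $z$-derivatives in a way governed by the Lagrange-type reversion already carried out in the proof of Theorem~\ref{thmImplicit}; alternatively, and more simply, one applies Theorem~\ref{thmImplicit} directly to $F(x,y)$ built so that $\ph(x)$ encodes the reciprocal series—this is the standard observation that functional inversion of tangent-to-identity series is a special case of the implicit function theorem in the formal category, and the resurgence is then inherited. (Uniqueness of the formal inverse in $\ti\gG$ guarantees the two-sided inverse coincides, so one need only check $g\circ f = z$.)

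Finally, the inclusion $z + \C\{z\ii\} \subset \ti\gG_\Om$ is immediate since $\C\{z\ii\} \subset \ti\gR_\Om$ (convergent germs extend everywhere), and $z + \C\{z\ii\}$ is already known to be a subgroup of $\ti\gG$; alternatively it follows a posteriori from the closure of $\ti\gG_\Om$ under the group operations together with $z + \C\{z\ii\} \subset \ti\gG_\Om$ at the level of sets.

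\emph{Main obstacle.} The one genuine subtlety is the reduction step: handling the constant term $a_0$ of $\ti\ph$, since a naive substitution $\ti\psi(z+\ti\ph)$ falls outside the hypotheses of Theorem~\ref{thmSubst} (the increment $\ti\ph$ is not in $z\ii\C[[z\ii]]$, and $1/(z+a_0)$ is not a resurgent series with no constant term). The key trick is that \emph{finite translation $z \mapsto z + a_0$ preserves $\hat\gR_\Om$} because on the Borel plane it is just multiplication by the entire function $\ee^{-a_0\ze}$, which does not move singularities; one then substitutes only the part $\hat{\ti\ph} = \ti\ph - a_0$, which does lie in $\cB\ii(\hat\gR_\Om)$, into the translated series. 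Once this reduction is in place, everything else is a direct application of Theorems~\ref{thmFrechetAlg}, \ref{thmSubst}, and \ref{thmImplicit} together with Remark~\ref{remactionddzcB}.
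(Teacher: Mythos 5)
Your composition argument is correct, but it takes a detour that the paper's proof avoids. You reduce to Theorem~\ref{thmSubst} by first translating ($\ti\psi \mapsto \ti\psi(\cdot+a_0)$, which on the Borel side is multiplication by the entire function $\ee^{-a_0\ze}$ and so indeed preserves $\hat\gR_\Om$) and then substituting the constant-term-free part $\ti\ph-a_0$ of the increment into $\ti H(y)=\ti\psi_{a_0}(z+y)$, whose coefficients $\frac{1}{k!}(\frac{\dd\,}{\dd z})^k\ti\psi_{a_0}$ satisfy~\eqref{ineqHCVAB} by Remark~\ref{remactionddzcB}. That is all valid. The paper instead writes the composed increment directly as the formal Taylor series $\ti\phi\circ\ti g=\ti\phi+\sum_{n\ge1}\frac{1}{n!}\ti\psi^n(\frac{\dd\,}{\dd z})^n\ti\phi$ and bounds each term by~\eqref{ineqGenProdrwct} with $r=0$ together with Remark~\ref{remactionddzcB}; the $1/n!$ coming from the Taylor expansion already forces convergence in every seminorm, with no hypothesis whatsoever on constant terms. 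So the ``main obstacle'' you single out is an artifact of insisting on routing everything through Theorem~\ref{thmSubst}; your translation lemma is a correct and useful observation, but it is not needed.

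The inversion half has a genuine gap: neither of the two formulations you propose is actually an instance of Theorem~\ref{thmImplicit}. The equation $\ti\psi=-\ti\ph-\bigl(\ti\psi(z+\ti\ph)-\ti\psi(z)\bigr)$ contains the \emph{unknown} composed with $z+\ti\ph$ (equivalently, all derivatives $(\frac{\dd\,}{\dd z})^n\ti\psi$ of the unknown), so it is not of the form $\ti F\bigl(z,\ti\psi(z)\bigr)=0$. And the ``standard observation'' that reversion is a special case of the implicit function theorem --- solve $F(y)-x=0$ --- fails here: the $y$-coefficients of that $F$ are built from the divergent series $\ti\ph$, so they need not satisfy the geometric bound required for $\ti F\in\ti\gR_\Om\{y\}$. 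What does work is the fixed-point equation for the increment $\ti\chi$ of $\ti f\ic$: writing $\ti f\ic=z+\ti\chi$, the relation $\ti f\circ\ti f\ic=z$ reads $\ti\chi=-\ti\phi(z+\ti\chi)$, which after Taylor expansion becomes $\ti F(z,\ti\chi)=0$ with $\ti F(z,y)\defeq -y-\sum_{k\ge0}\frac{1}{k!}(\frac{\dd\,}{\dd z})^k\ti\phi\cdot y^k$; its coefficients satisfy~\eqref{ineqHCVAB} by Remark~\ref{remactionddzcB}, and $F(0,0)=0$, $\pa_yF(0,0)=-1$ hold once the constant term of $\ti\phi$ has been absorbed by your translation trick. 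You need to write this reduction down explicitly --- as it stands, the step is an appeal to a theorem whose hypotheses your stated formulations do not meet. For comparison, the paper bypasses Theorem~\ref{thmImplicit} entirely and produces the inverse in closed form by Lagrange reversion, $\ti h=z+\sum_{k\ge1}\frac{(-1)^k}{k!}(\frac{\dd\,}{\dd z})^{k-1}(\ti\phi^k)$, bounding each term exactly as in the composition step; again no constant-term issue arises there.
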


%%%%%%%%%%%%%%%%%%%%%%%%%%%%%%%%%%%%%%%%%%%%%%%%%%%%%

\begin{proof}
We must prove that, for arbitrary $\ti f(z) = z + \ti\phi(z), 
\ti g(z) = z + \ti\psi(z)\in \ti\gG_\Om$,
both $\ti f\circ\ti g$ and $\ti h \defeq \ti f\ic$ belong to~$\ti\gR_\Om$.

We have $\ti f\circ\ti g = \ti g + \ti\phi\circ\ti g$, where the last term can
be defined by the formally convergent series
\beglabel{eqdefcompos}
\ti\phi\circ\ti g = \ti\phi + \sum_{n\ge1}\frac{1}{n!}
\ti\psi^n \Big(\frac{\dd\,}{\dd z}\Big)^n \ti\phi.
\elabel
Let $K\subset \gS_\Om$ be compact, and let $K'\supset K$ and $C>0$ be as in
Theorem~\ref{thmFrechetAlg}. 
We have
\[
\norm*{\ti\psi^n \Big(\frac{\dd\,}{\dd z}\Big)^n \ti\phi}_K \le
C^{n+1} \norm*{\ti\psi}_{K'}^n \,
\norm*{\Big(\frac{\dd\,}{\dd z}\Big)^n \ti\phi}_{K'}
\le
C^{n+1} D(K')^n \norm*{\ti\psi}_{K'}^n \, \norm*{\ti\phi}_{K'},
\]
where $D(K') \defeq \max_{\ze\in K'}\abs{\ze}$ (by Remark~\ref{remactionddzcB}),
hence the series~\eqref{eqdefcompos} is convergent in~$\ti\gR_\Om$, and
$\norm*{\ti\phi\circ\ti g}_K \le C \norm*{\ti\phi}_{K'}
\exp\big( C D(K') \norm*{\ti\psi}_{K'} \big)$.

As for $\ti h$, the Lagrange reversion formula yields it in the form of a formally convergent series
\beglabel{eqdeftihinverstif}
\ti h = z + \sum_{k=1}^\infty \frac{(-1)^k}{k!} 
\Big(\frac{\dd\,}{\dd z}\Big)^{k-1}(\ti\phi^k).
\elabel
We have
\[
\norm*{\Big(\frac{\dd\,}{\dd z}\Big)^{k-1}(\ti\phi^k)}_K \le
D(K)^{k-1} \norm*{\ti\phi^k}_K \le
D(K)^{k-1} C^k \norm*{\ti\phi}_{K'}^k
\]
(again by Remark~\ref{remactionddzcB} and Theorem~\ref{thmFrechetAlg}),
hence the series~\eqref{eqdeftihinverstif} is convergent in~$\ti\gR_\Om$, and
$\norm*{\ti h-z}_K \le C \norm*{\ti\phi}_{K'}
\exp\big( C D(K) \norm*{\ti\phi}_{K'} \big)$.
\end{proof}

%%%%%%%%%%%%%%%%%%%%%%%%%%%%%%%%%%%%%%%%%%%%%%%%%%%%%

\begin{rem}
One can easily deduce from the estimates obtained in the above proof that
$\ti\gG_\Om$ is a topological group: composition and inversion are continuous if
we transport the topology of~$\ti\gR_\Om$ onto~$\ti\gG_\Om$ by the bijection
$\ti\phi \mapsto z + \ti\phi$.
\end{rem}

%%%%%%%%%%%%%%%%%%%%%%%%%%%%%%%%%%%%%%%%%%%%%%%%%%%%%
%%%%%%%%%%%%%%%%%%%%%%%%%%%%%%%%%%%%%%%%%%%%%%%%%%%%%

\subsection{Other applications}	\label{secPossAppl}

%%%%%%%%%%%%%%%%%%%%%%%%%%%%%%%%%%%%%%%%%%%%%%%%%%%%%
%%%%%%%%%%%%%%%%%%%%%%%%%%%%%%%%%%%%%%%%%%%%%%%%%%%%%

In this article, we stick to the simplest case which presents itself in
resurgence theory: 
formal expansions in negative integer powers of~$z$, whose Borel transforms
converge and extend analytically outside a set~$\Om$ fixed in advance, but 
\begin{enumerate}[--]
\item
the condition of $\Om$-continuability can be substituted with ``continuability
without a cut'' or ``endless continuability'' which allow for Riemann surfaces
much more general than~$\gS_\Om$ \cite[Vol.~3]{Eca81}, \cite{CNP};
\item
the theory of ``resurgent singularities'' was developed by J.~\'Ecalle to deal
with much more general formal objects than power series.
\end{enumerate}

%%%%%%%%%%%%%%%%%%%%%%%%%%%%%%%%%%%%%%%%%%%%%%%%%%%%%

The extension to more general Rieman surfaces is necessary in certain problems,
particularly those involving parametric resurgence or quantum resurgence (in
relation with semi-classical asymptotics).
To make our method accomodate the notion of continuability without a cut, one
could for instance imitate the way \cite{thesis_Y_Ou} deals with ``discrete
filtered sets''.
The point is that, when convolving germs in the $\ze$-plane, the singular points
of the analytic continuation of each factor may produce a singularity located at
the sum of these singular points,
but being continuable without a cut means that the set of singular points is
locally finite, thus one can explore sequentially the Riemann surface of the
convolution product, considering longer and longer paths of analytic
continuation and saturating the corresponding Riemann surface by removing at
each step the (finitely many) sums of singular points already encountered.

%%%%%%%%%%%%%%%%%%%%%%%%%%%%%%%%%%%%%%%%%%%%%%%%%%%%%

The formalism of general resurgent singularities also can be accomodated. 
The reader is referred to \cite{Eca81} and \cite{kokyu} for the corresponding
extension of the definition of convolution (see also \cite{EVinvb} and
\cite{introsummaresur}).
In short, the formal Borel transform~\eqref{eqdefcB}, which must be considered
as a termwise inverse Laplace transform, can be generalized by considering the
action of the Laplace transform on monomials like $\ze^\al (\log\ze)^m$ with
$m\in\N$ and $\al\in\C$ for instance.
One is thus led to deal with holomorphic functions of~$\ze$ defined for
arbitrarily small nonzero $\abs{\ze}$ but not holomorphic at the origin: one
must rather work in subsets of the Riemann surface of the logarithm 
(without even assuming the existence of any kind of expansion for small~$\abs{\ze}$)
before considering their analytic continuation for large values of~$\abs{\ze}$.
If one restricts oneself to functions which are integrable at~$0$, like the
convergent expansions involving monomials $\ze^\al (\log\ze)^m$ with
$\RE\al>-1$, then formula~\eqref{eqdefconvoldeux} may still be used to define
the convolution.
To deal with general resurgent singularities, one must replace it with the
so-called convolution of majors.
This should be the subject of another article,
but we can already mention that it is in the context of resurgent
singularities that the alien operators $\De_\om$ associated with non-zero
complex numbers~$\om$ are defined in the most efficient way.

These operators can be proved to be derivations (they satisfy the
Leibniz rule \wrt\ the convolution law) independent between them  and
independent of the natural derivation $\frac{\dd\,}{\dd z}$ except
for the relations
$\big[ \De_\om, \frac{\dd\,}{\dd z} \big] = - \om \De_\om$
(this is why they were called ``alien derivatives'' by \'Ecalle).
They annihilate the convergent series (because $\De_\om$ measures the
singularity at~$\om$ of a combination of branches of the Borel
transform and the Borel transform of a convergent series has no
singularity at all) and a suitable adaptation of
Theorem~\ref{thmboundconvga} allows one to check the rules of ``alien
calculus'', \eg
\begin{align*}
\De_\om\big( \ti H(\ti\ph_1,\ldots,\ti\ph_r) \big) &=
(\De_\om\ti H)(\ti\ph_1,\ldots,\ti\ph_r) +
\sum_{j=1}^r 
(\De_\om \ti\ph_j) \cdot
\frac{\pa\ti H}{\pa w_j}(\ti\ph_1,\ldots,\ti\ph_r) \\[1ex]
\De_\om(\ti f\circ \ti g) &=
\ee^{-\om(\ti g-z)} \cdot (\De_\om\ti f)\circ\ti g +
\Big(  \frac{\dd\ti f}{\dd z} \circ \ti g \Big) \cdot \De_\om\ti g
\end{align*}
in the situations of Theorems~\ref{thmSubst} and~\ref{thmGroup}
(where $\De_\om\ti H$ is defined,
with the notation of Theorem~\ref{thmSubst}, as the formal series 
$\sum (\De_\om \ti H_{\bk})(z) \, w_1^{k_1} \cdots w_r^{k_r}$,
and $(\De_\om\ti H)(\ti\ph_1,\ldots,\ti\ph_r)$
and $(\De_\om\ti f)\circ\ti g$
must be defined properly; see Theorem~30.9 of \cite{introsummaresur}
for an example).

%%%%%%%%%%%%%%%%%%%%%%%%%%%%%%%%%%%%%%%%%%%%%%%%%%%%%

As another possible application, it would be worth trying to adapt our method to
the weighted convolution products which appear in \cite{Eca94}. 
Their definition is as follows:
given a sequence of pairs $B_1 = (\om_1,b_1)$, $B_2 = (\om_2,b_2)$, etc.\ 
with $\om_n \in \C$ and $b_n \in \C\{\ze\}$ and assuming that
\[
\check\om_n = \om_1 + \cdots + \om_n \neq 0, \qquad n\in\N^*,
\]
one defines a sequence $\hat S^{B_1}, \hat S^{B_1,B_2}, \ldots \in \C\{\ze\}$ by
the formulas
\begin{multline*}
\hat S^{B_1}(\ze) \defeq \frac{1}{\om_1} b_1\Big( \frac{\ze}{\om_1}\Big),
\qquad
\hat S^{B_1,B_2}(\ze) \defeq \frac{1}{\om_1} \int_0^{\ze/\check\om_2}
b_1\Big( \frac{\ze-\om_2 \xi_2}{\om_1} \Big) b_2(\xi_2) \,\dd\xi_2, 
\\[1ex]
\hat S^{B_1,B_2,B_3}(\ze) \defeq \frac{1}{\om_1} \int_0^{\ze/\check\om_3} \dd\xi_3
\int_{\xi_3}^{(\ze-\om_3\xi_3)/\check\om_2} \dd\xi_2 \,
b_1\Big( \frac{\ze-\om_2 \xi_2 - \om_3 \xi_3}{\om_1} \Big) 
b_2(\xi_2) b_3(\xi_3),
\qquad \text{etc.}
\end{multline*}
The general formula is
$\dst \hat S^{B_1,\ldots,B_n}(\ze) \defeq \frac{1}{\om_1} \dst\int
\dd\xi_n \cdots \dd\xi_2 \, b_1(\xi_1) b_2(\xi_2) \cdots b_n(\xi_n)$,
where the integral is taken over 
\[
\xi_n \in \Big[ 0, \frac{\ze}{\check\om_n} \Big],
\qquad
\xi_i \in \Big[ \xi_{i+1}, 
\frac{\ze - (\om_{i+1}\xi_{i+1} + \cdots + \om_n\xi_n)}{\check\om_i} \Big]
\ens \text{for $i = n-1, n-2, \ldots, 2$}
\]
and $\dst \xi_1 \defeq \frac{\ze - (\om_2\xi_2 + \cdots + \om_n\xi_n)}{\check\om_1}$.
There is a relation with the ordinary convolution called symmetrality: if
$\bB' = B^{i_1}\cdots B^{i_n}$ and
$\bB'' = B^{j_1}\cdots B^{j_m}$, then 
$\hat S^{\bB'} * \hat S^{\bB''}$ is the sum $\sum \hat S^{\bB}$ over all
words~$\bB$ belonging to the shuffle of~$\bB'$ and~$\bB''$, \eg
\[
\hat S^{B_1} * \hat S^{B_2} = \hat S^{B_1,B_2} + \hat S^{B_2,B_1},
\qquad
\hat S^{B_1,B_2} * \hat S^{B_3} = \hat S^{B_1,B_2,B_3} + \hat S^{B_1,B_3,B_2}
+ \hat S^{B_3,B_1,B_2},
\qquad \text{etc.}
\]
It is argued in \cite{Eca94} that the weighted convolutions $\hat S^{B_1,\ldots,B_n}$
associated with endlessly continuable germs $b_1, b_2, \ldots$ are themselves
endlessly continuable and constitute the ``building blocks'' of the resurgent
functions which appear in parametric resurgence or quantum resurgence problems
(see \cite{AnnF} for an example with $\om_i = 1$ for all~$i$).
It would thus be interesting and natural (because the weighted convolution
products present themselves as multiple integrals not so different from the
$n$-fold integrals~\eqref{eqdefconvmultint} below) to try to deform the
integration simplex, in a manner similar to the one that will be employed for
convolution products in Sections~\ref{secstartpf}--\ref{secestimisot}, in order to
control the analytic continuation of $\hat S^{B_1,\ldots,B_r}$.

%%%%%%%%%%%%%%%%%%%%%%%%%%%%%%%%%%%%%%%%%%%%%%%%%%%%%
%%%%%%%%%%%%%%%%%%%%%%%%%%%%%%%%%%%%%%%%%%%%%%%%%%%%%

\subsection{Nonlinear analysis with $1$-summable series}	\label{secNLsumma}

%%%%%%%%%%%%%%%%%%%%%%%%%%%%%%%%%%%%%%%%%%%%%%%%%%%%%
%%%%%%%%%%%%%%%%%%%%%%%%%%%%%%%%%%%%%%%%%%%%%%%%%%%%%

For the resurgent series encountered in practice, one is often
interested in applying Borel-Laplace summation. It is thus important
to notice that the property of $1$-summability too is compatible with
the nonlinear operations described in the previous sections.

We recall that, given a non-trivial interval~$\cA$ of~$\R$, a formal series
$\ti\phi(z) \in \C[[z\ii]]$ is said to be \emph{$1$-summable in the
  directions of~$\cA$} if 
it can be written $\ti\phi = c + \cB\ii\hat\ph$ with $c\in\C$ and
$\hat\ph(\ze) \in \C\{\ze\}$,
there exists $\rho>0$ such that~$\hat\ph$ extends analytically to
\[
S(\rho,\cA) \defeq \D_\rho \cup \big\{ r\,\eith \mid r>0,\;
\th\in\cA \big\}
\]
and there exist $\tau \in \R$ and $C>0$ such that 
$\abs{\hat\ph(\ze)} \le C\, \ee^{\tau\abs{\ze}}$ for all $\ze \in S(\rho,\cA)$.
In such a case, the \emph{Borel sum of~$\ti\phi$} is the function
obtained by glueing the Laplace transforms of~$\hat\ph$ associated with
the directions of~$\cA$ (the Cauchy Theorem entails that they match)
and adding the constant term~$c$,
\ie the function $\SUM^{\cA}\ti\phi$ holomorphic in the union of half-planes%
\footnote{viewed as a subset of the Riemann surface of the logarithm if
$\tau\ge0$ and $\cA$ has length $\ge\pi$}
$\Sig^\cA_\tau \defeq \dst \bigcup_{\th\in\cA} \{ z \mid \RE(z\,\eith) > \tau \}$
defined by
$\dst
\big( \SUM^{\cA}\ti\phi \big)(z) \defeq c + \int_0^{\eith\infty} \hat\ph(\ze) \,\ee^{-z\ze}\,\dd\ze$
% \quad \text{
%
with any $\th\in\cA$ such that $\RE(z\,\eith) > \tau$.
This function admits~$\ti\phi(z)$ as Gevrey asymptotic expansion and
is the only one with this property---see \eg \cite{Bal94}, \cite{gazRamis}.

%%%%%%%%%%%%%%%%%%%%%%%%%%%%%%%%%%%%%%%%%%%%%%%%%%%%%

Let us denote by $\ti\gS^\cA$ the subspace of all $1$-summable
series, so that
\[\C\{z\ii\} \subset \ti\gS^\cA \subset \C[[z\ii]] \]
($\SUM^\cA$ coincides with ordinary summation in restriction to $\C\{z\ii\}$).
%
%%%%%%%%%%%%%%%%%%%%%%%%%%%%%%%%%%%%%%%%%%%%%%%%%%%%%
%
The open sets $S(\rho,\cA)$ being star-shaped \wrt~$0$, we can
use~\eqref{eqiterconvol} and check that the properties imposed to the
$\hat\ph$'s to define $1$-summability (analytic continuation to
$S(\rho,\cA)$ and exponential bound) are stable under convolution.
More precisely, we get
\begin{multline}
\text{$\abs{\hat\ph_j(\ze)} \le C_j \, \ee^{\tau\abs{\ze}}$ 
for $\ze \in S(\rho,\cA)$ and $j=1,\ldots,n$} \\[1ex]
\label{ineqexpbound}
\Longrightarrow\quad
\abs{ \hat\ph_1*\cdots*\hat\ph_n(\ze) } \le 
\frac{\abs{\ze}^{n-1}}{(n-1)!} 
C_1 \cdots C_n \, \ee^{\tau\abs{\ze}}
\ens\text{for $\ze \in S(\rho,\cA)$.}
\end{multline}
It follows that $\ti\gS^\cA$ is a subalgebra of $\C[[z\ii]]$ and, since the Laplace transform maps
the convolution product onto the multiplication of functions,
$\SUM^{\cA}(\ti\phi_1 \ti\phi_2) = ( \SUM^{\cA}\ti\phi_1 ) (
\SUM^{\cA}\ti\phi_2 )$.
%
%%%%%%%%%%%%%%%%%%%%%%%%%%%%%%%%%%%%%%%%%%%%%%%%%%%%%
%
In view of Remark~\ref{remactionddzcB}, it is even a differential
subalgebra and $\SUM^{\cA}\frac{\dd\ti\phi}{\dd z} = 
\frac{\dd\;}{\dd z} \SUM^{\cA}\ti\phi$.

%%%%%%%%%%%%%%%%%%%%%%%%%%%%%%%%%%%%%%%%%%%%%%%%%%%%%

To go farther, we fix a non-trivial interval~$\cA$ of~$\R$ and set
\[
\norm*{\ti\phi}_{\rho,\tau} \defeq 
\abs{c} + \sup_{S(\rho,\cA)} \ee^{-\tau\abs{\ze}} \abs{\hat\ph(\ze)}
\]
for any $\rho>0$, $\tau \in \R$ and
$\ti\phi = c + \cB\ii\hat\ph \in \C \oplus \cB\ii\big(
\gO(S(\rho,\cA)) \big)$,
so that a formal series~$\ti\phi(z)$ belongs to~$\ti\gS^\cA$ if and
only if there exist~$\rho$ and~$\tau$ such that
$\norm*{\ti\phi}_{\rho,\tau} < \infty$,
and $\SUM^\cA\ti\phi$ is then holomorphic at least in~$\Sig^\cA_\tau$.
The results of the previous sections can be complemented with the
following four theorems, the proof of which will be outlined at the
end of this section:

%%%%%%%%%%%%%%%%%%%%%%%%%%%%%%%%%%%%%%%%%%%%%%%%%%%%%
%
\begin{thmPdt}
Suppose $n\ge1$, $\ti\phi_1,\ldots,\ti\phi_n \in \ti\gS^\cA$ and $N\in\N$. Then
\[
\norm*{\ti\phi_1 \cdots \ti\phi_n}_{\rho,\tau+\eps} \le
\max(1,\tfrac{1}{\eps^{n-1}})
\norm*{\ti\phi_1}_{\rho,\tau} \cdots \norm*{\ti\phi_n}_{\rho,\tau},
\qquad
\norm{ \vphantom{\ti\phi} \smash{\frac{\dd^N\ti\phi_1}{\dd z^N}}}_{\rho,\tau+\eps} \le
\frac{N!}{\eps^N} \norm*{\ti\phi_1}_{\rho,\tau}
\]
for every $\rho>0$, $\tau\in\R$ and $\eps>0$.
\end{thmPdt}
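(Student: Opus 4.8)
The plan is to establish the two inequalities directly from the definition of the seminorm $\norm*{\cdot}_{\rho,\tau}$ together with the iterated-integral representation~\eqref{eqiterconvol} and the elementary estimate~\eqref{ineqexpbound}. Write $\ti\phi_j = c_j + \cB\ii\hat\ph_j$ with $\hat\ph_j \in \gO(S(\rho,\cA))$, and set $M_j \defeq \norm*{\ti\phi_j}_{\rho,\tau} = \abs{c_j} + \sup_{S(\rho,\cA)} \ee^{-\tau\abs{\ze}}\abs{\hat\ph_j(\ze)}$; in particular $\abs{c_j}\le M_j$ and $\abs{\hat\ph_j(\ze)} \le M_j\,\ee^{\tau\abs{\ze}}$ on $S(\rho,\cA)$. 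The product $\ti\phi_1\cdots\ti\phi_n$ has constant term $c_1\cdots c_n$ and its Borel transform is a sum over subsets $J\subset\{1,\ldots,n\}$ (of cardinality $k\ge1$) of $\big(\prod_{j\notin J}c_j\big)\cdot(\hat\ph_{j_1}*\cdots*\hat\ph_{j_k})$. For each such term, \eqref{ineqexpbound} applied to the $k$ factors indexed by $J$ gives a pointwise bound
\[
\Big(\prod_{j\notin J}\abs{c_j}\Big)\cdot\frac{\abs{\ze}^{k-1}}{(k-1)!}\prod_{j\in J}\big(\sup\ee^{-\tau\abs{\cdot}}\abs{\hat\ph_j}\big)\cdot\ee^{\tau\abs{\ze}}
\le\frac{\abs{\ze}^{k-1}}{(k-1)!}\,\ee^{\tau\abs{\ze}}\prod_{j=1}^n M_j.
\]

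First I would handle the $\eps$-shift: multiply the previous display by $\ee^{-(\tau+\eps)\abs{\ze}}$ and use $\sup_{t\ge0}\frac{t^{k-1}}{(k-1)!}\ee^{-\eps t}=\big(\frac{k-1}{\eps}\big)^{k-1}\frac{\ee^{-(k-1)}}{(k-1)!}$, which is crudely $\le\max(1,\eps^{-(k-1)})$ since $\frac{(k-1)^{k-1}\ee^{-(k-1)}}{(k-1)!}\le1$ (for $k=1$ the factor is $1$). Hence each $J$-term contributes at most $\max(1,\eps^{-(k-1)})\prod_j M_j\le\max(1,\eps^{-(n-1)})\prod_j M_j$ to $\sup_{S(\rho,\cA)}\ee^{-(\tau+\eps)\abs{\ze}}\abs{\cdot}$, and the constant term contributes $\abs{c_1\cdots c_n}\le\prod_j M_j$. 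A naive sum over all $2^n$ subsets would insert a spurious $2^n$; to avoid it one reorganises the sum exactly as in the proof of Theorem~\ref{thmFrechetAlg}, grouping the $c_j$'s out front, so that what multiplies the constants is itself a seminorm estimate. In fact the cleanest route is to cite Theorem~\ref{thmFrechetAlg}'s combinatorial identity: $\norm*{\ti\phi_1\cdots\ti\phi_n}_{\rho,\tau+\eps}\le\sum_{k}\binom{n}{k}^{-1}\!\cdots$ — better, I would simply mimic that argument verbatim, replacing $\frac{C^m}{m!}$ by $\max(1,\eps^{-(m-1)})$ and $\norm*{\cdot}_{K},\norm*{\cdot}_{K'}$ by $\norm*{\cdot}_{\rho,\tau+\eps},\norm*{\cdot}_{\rho,\tau}$; the telescoping there is designed precisely so that no combinatorial blow-up survives, yielding $\norm*{\ti\phi_1\cdots\ti\phi_n}_{\rho,\tau+\eps}\le\max(1,\eps^{-(n-1)})\prod_j\norm*{\ti\phi_j}_{\rho,\tau}$.

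For the derivative bound, recall from Remark~\ref{remactionddzcB} that $\frac{\dd\,}{\dd z}$ acts on $\ti\phi=c+\cB\ii\hat\ph$ by $\hat\ph(\ze)\mapsto-\ze\hat\ph(\ze)$, so $\frac{\dd^N\ti\phi_1}{\dd z^N}$ has Borel transform $(-\ze)^N\hat\ph_1(\ze)$ and no constant term. Thus $\norm*{\frac{\dd^N\ti\phi_1}{\dd z^N}}_{\rho,\tau+\eps}=\sup_{S(\rho,\cA)}\abs{\ze}^N\ee^{-(\tau+\eps)\abs{\ze}}\abs{\hat\ph_1(\ze)}\le\big(\sup_{t\ge0}t^N\ee^{-\eps t}\big)\sup_{S(\rho,\cA)}\ee^{-\tau\abs{\ze}}\abs{\hat\ph_1(\ze)}$, and $\sup_{t\ge0}t^N\ee^{-\eps t}=\big(\frac{N}{\eps}\big)^N\ee^{-N}\le\frac{N!}{\eps^N}$ by $N^N\le N!\,\ee^N$. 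This gives the second inequality. The only real subtlety is the first one: getting the constant exactly $\max(1,\eps^{-(n-1)})$ rather than something like $2^{n-1}\eps^{-(n-1)}$, which is why the bookkeeping of the constant terms must follow the pattern of Theorem~\ref{thmFrechetAlg} rather than a term-by-term triangle inequality; everything else is a one-line optimisation of $t\mapsto t^{k}\ee^{-\eps t}$.
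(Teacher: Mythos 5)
Your proof is correct and follows essentially the same route as the paper's: decompose $\ti\phi_1\cdots\ti\phi_n$ into its constant term plus a sum over subsets of convolution products of the constant-term-free parts, apply~\eqref{ineqexpbound}, absorb $\abs{\ze}^{q-1}/(q-1)!$ into $\max(1,\eps^{-(n-1)})\,\ee^{\eps\abs{\ze}}$, and get the derivative bound from the fact that $\frac{\dd^N\,}{\dd z^N}$ acts on the Borel side as multiplication by $(-\ze)^N$ (the paper uses $\frac{(\eps\abs{\ze})^N}{N!}\le\ee^{\eps\abs{\ze}}$ where you optimize $t^N\ee^{-\eps t}$, but these give the same constant). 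One clarification on the step you flag as the ``only real subtlety'': the mechanism that prevents the $2^n$ blow-up is not a telescoping but simply the multinomial identity $\sum_{I}\prod_{i\in I}\abs{c_i}\prod_{j\notin I}\norm{\ti\ph_j}_{\rho,\tau}=\prod_{j}\norm{\ti\phi_j}_{\rho,\tau}$, so one must keep the individual factors $\abs{c_i}$ and $\norm{\ti\ph_j}_{\rho,\tau}$ in each summand rather than crushing every summand to $\prod_j M_j$ as in your first display.
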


%%%%%%%%%%%%%%%%%%%%%%%%%%%%%%%%%%%%%%%%%%%%%%%%%%%%%
%
\begin{thmSubs}
Suppose that $\ti H
= \sum_{\bk = (k_1,\ldots,k_r) \in \N^r} \ti H_{\bk}(z) \, w_1^{k_1} \cdots w_r^{k_r}
\in \C[[z\ii,w_1,\ldots,w_r]]$ has its coefficients $1$-summable in
the directions of~$\cA$ and
$\norm*{\ti H_{\bk}}_{\rho,\tau} \le A \, B^{\abs{\bk}}$ for all $\bk\in\N^r$,
with some $\rho,A,B>0$ and $\tau\in\R$ independent of~$\bk$.
Then 
\[
\cH^\cA(z,w_1,\ldots,w_r) \defeq 
\sum_{\bk = (k_1,\ldots,k_r) \in \N^r} 
(\SUM^\cA\ti H_{\bk})(z) \, w_1^{k_1} \cdots w_r^{k_r}
\]
is holomorphic in $\Sig^\cA_\tau \times \D_{1/B} \cdots \times \D_{1/B}$
and, for all $\ti\ph_1,\ldots,\ti\ph_r \in \ti\gS^\cA$ without
constant term, 
\[
\ti H(\ti\ph_1,\ldots,\ti\ph_r) \in \ti\gS^\cA 
\ens \text{and} \ens
\SUM^\cA \big( \ti H(\ti\ph_1,\ldots,\ti\ph_r) \big)(z) =
\cH^\cA \big( z, \SUM^\cA\ti\ph_1(z), \ldots, \SUM^\cA\ti\ph_r(z) \big)
\]
for $z \in \Sig^\cA_{\tau'}$
as soon as~$\tau'$ is large enough.
One can take
$\tau' = \tau + B\big(
\norm*{\ti\ph_1}_{\rho,\tau} + \cdots + \norm*{\ti\ph_r}_{\rho,\tau}
\big)$
if this number is finite
(if not, take~$\tau$ larger and~$\rho$ smaller), in which case
\[
\norm*{\ti H(\ti\ph_1,\ldots,\ti\ph_r)}_{\rho,\tau'}
\le A\Big( 2 + B\big(
\norm*{\ti\ph_1}_{\rho,\tau} + \cdots + \norm*{\ti\ph_r}_{\rho,\tau}
\big) \Big).
\]
\end{thmSubs}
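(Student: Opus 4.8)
\emph{Strategy.} The plan is to run the proof of Theorem~\ref{thmSubst}, with the bound coming from Theorem~\ref{thmFrechetAlg} replaced by the elementary convolution estimate~\eqref{ineqexpbound}, and to sum over the multi-index~$\bk$ \emph{before} estimating, which is what yields the sharp exponential type~$\tau'$. Write each coefficient as $\ti H_\bk = c_\bk + \cB\ii\hat h_\bk$ with $\hat h_\bk\in\gO(S(\rho,\cA))$, so that $\abs{c_\bk} + \sup_{S(\rho,\cA)}\ee^{-\tau\abs{\ze}}\abs{\hat h_\bk(\ze)} = \norm*{\ti H_\bk}_{\rho,\tau}\le AB^{\abs{\bk}}$. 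For the holomorphy of~$\cH^\cA$: given a compact $K\subset\Sig^\cA_\tau$, choose $\tau_1=\tau_1(K)>\tau$ such that every $z\in K$ admits $\th\in\cA$ with $\RE(z\,\eith)\ge\tau_1$; bounding the Laplace integral of~$\hat h_\bk$ along~$\R^+\eith$ then gives $\abs{\SUM^\cA\ti H_\bk(z)}\le\bigl(1+(\tau_1-\tau)\ii\bigr)AB^{\abs{\bk}}$, uniformly on~$K$, so by the Weierstrass test the defining series of~$\cH^\cA$ converges locally uniformly, whence $\cH^\cA$ is holomorphic, on $\Sig^\cA_\tau\times\D_{1/B}\times\cdots\times\D_{1/B}$.

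\emph{Membership in $\ti\gS^\cA$ and the bound.} Write $\ti\ph_j=\cB\ii\hat\ph_j$ and set $M_j\defeq\norm*{\ti\ph_j}_{\rho,\tau}$; after possibly decreasing~$\rho$ and increasing~$\tau$ (which keeps $\norm*{\ti H_\bk}_{\rho,\tau}\le AB^{\abs{\bk}}$ valid) we may assume all the~$M_j$ finite. Put $S\defeq M_1+\cdots+M_r$ and $\tau'\defeq\tau+BS$. The family $\bigl(\ti H_\bk\ti\ph_1^{k_1}\cdots\ti\ph_r^{k_r}\bigr)_{\bk\in\N^r}$ is formally summable (its $\bk$-term has order $\ge\abs{\bk}$); its sum has constant term $c_{(0,\ldots,0)}$, and $\cB$ of its non-constant part is $\sum_\bk\hat\psi_\bk$, where
\[
\hat\psi_\bk \defeq c_\bk\,\hat\ph_1^{*k_1}*\cdots*\hat\ph_r^{*k_r} + \hat h_\bk*\hat\ph_1^{*k_1}*\cdots*\hat\ph_r^{*k_r}
\]
is holomorphic on the star-shaped set $S(\rho,\cA)$ (the first summand dropped for $\bk=(0,\ldots,0)$). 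Feeding the bounds $\abs{c_\bk}\le AB^{\abs{\bk}}$, $\sup_{S(\rho,\cA)}\ee^{-\tau\abs{\ze}}\abs{\hat h_\bk(\ze)}\le AB^{\abs{\bk}}$, $\sup_{S(\rho,\cA)}\ee^{-\tau\abs{\ze}}\abs{\hat\ph_j(\ze)}=M_j$ into~\eqref{ineqexpbound} (applied to convolutions of $\abs{\bk}$, resp.\ $\abs{\bk}+1$, factors) gives, for $k=\abs{\bk}\ge1$,
\[
\abs{\hat\psi_\bk(\ze)}\le AB^{k}M_1^{k_1}\cdots M_r^{k_r}\,\ee^{\tau\abs{\ze}}\Bigl(\frac{\abs{\ze}^{k-1}}{(k-1)!}+\frac{\abs{\ze}^{k}}{k!}\Bigr),\qquad\ze\in S(\rho,\cA),
\]
while $\abs{\hat\psi_{(0,\ldots,0)}(\ze)}=\abs{\hat h_{(0,\ldots,0)}(\ze)}\le A\,\ee^{\tau\abs{\ze}}$. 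Summing over~$\bk$, using $\sum_{\abs{\bk}=k}M_1^{k_1}\cdots M_r^{k_r}=S^{k}$ and recognising the exponential series,
\[
\sum_\bk\abs{\hat\psi_\bk(\ze)}\le A\,\ee^{\tau\abs{\ze}}\bigl(1+BS\,\ee^{BS\abs{\ze}}+\ee^{BS\abs{\ze}}-1\bigr)=A(1+BS)\,\ee^{\tau'\abs{\ze}},\qquad\ze\in S(\rho,\cA).
\]
Hence $\hat\psi\defeq\sum_\bk\hat\psi_\bk$ converges locally uniformly on $S(\rho,\cA)$ to a holomorphic function with $\abs{\hat\psi(\ze)}\le A(1+BS)\ee^{\tau'\abs{\ze}}$; comparing Taylor coefficients at~$0$ (finitely many~$\bk$ affecting each) identifies $\hat\psi$ with the Borel transform of the non-constant part of $\ti H(\ti\ph_1,\ldots,\ti\ph_r)$, so $\ti H(\ti\ph_1,\ldots,\ti\ph_r)=c_{(0,\ldots,0)}+\cB\ii\hat\psi\in\ti\gS^\cA$ with $\norm*{\ti H(\ti\ph_1,\ldots,\ti\ph_r)}_{\rho,\tau'}\le\abs{c_{(0,\ldots,0)}}+A(1+BS)\le A\bigl(2+B(M_1+\cdots+M_r)\bigr)$.

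\emph{The Borel sum identity.} Fix $z\in\Sig^\cA_{\tau'}$ and $\th\in\cA$ with $\RE(z\,\eith)>\tau'$. Along $\R^+\eith$, $\sum_\bk\abs{\hat\psi_\bk(\ze)}\ee^{-\RE(z\,\eith)\abs{\ze}}$ is dominated by the integrable function $A(1+BS)\ee^{-(\RE(z\,\eith)-\tau')\abs{\ze}}$, so dominated convergence lets one commute $\sum_\bk$ with the Laplace integral: $\SUM^\cA\bigl(\ti H(\ti\ph_1,\ldots,\ti\ph_r)\bigr)(z)=\sum_\bk\SUM^\cA\bigl(\ti H_\bk\ti\ph_1^{k_1}\cdots\ti\ph_r^{k_r}\bigr)(z)$. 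Since $\SUM^\cA$ is an algebra homomorphism on $\ti\gS^\cA$, each summand equals $\SUM^\cA\ti H_\bk(z)\,\bigl(\SUM^\cA\ti\ph_1(z)\bigr)^{k_1}\cdots\bigl(\SUM^\cA\ti\ph_r(z)\bigr)^{k_r}$; and $\abs{\SUM^\cA\ti\ph_j(z)}\le M_j/\bigl(\RE(z\,\eith)-\tau\bigr)<M_j/(BS)\le 1/B$, so the point $\bigl(\SUM^\cA\ti\ph_1(z),\ldots,\SUM^\cA\ti\ph_r(z)\bigr)$ lies in $\D_{1/B}\times\cdots\times\D_{1/B}$ and the sum is exactly $\cH^\cA\bigl(z,\SUM^\cA\ti\ph_1(z),\ldots,\SUM^\cA\ti\ph_r(z)\bigr)$.

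\emph{Main obstacle.} The delicate point is to reach the \emph{sharp} exponential type $\tau'=\tau+B(M_1+\cdots+M_r)$ rather than merely some strictly larger value: estimating $\norm*{\ti H_\bk\ti\ph_1^{k_1}\cdots\ti\ph_r^{k_r}}_{\rho,\tau'}$ term by term does not give a summable series at this critical type, so one must estimate $\sum_\bk\abs{\hat\psi_\bk(\ze)}$ all at once and fold the geometric-type sums back into an exponential. This very value of~$\tau'$ is moreover exactly what keeps each $\SUM^\cA\ti\ph_j(z)$ inside the disc of convergence of~$\cH^\cA$ in the variable~$w_j$ over $\Sig^\cA_{\tau'}$, so the two halves of the argument are forced to be calibrated to the same constant.
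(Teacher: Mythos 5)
Your proof is correct and follows essentially the same route as the paper's outline: decompose each $\ti H_{\bk}$ into its constant term plus a remainder without constant term, apply the star-shaped convolution bound~\eqref{ineqexpbound} termwise, and sum the whole series into $A(1+BS)\,\ee^{(\tau+BS)\abs{\ze}}$ with $S=M_1+\cdots+M_r$ before comparing exponential types (the paper merely shifts indices so that both families of terms carry the common weight $\abs{\ze}^{\abs{\bk}}/\abs{\bk}!$, whereas you keep the two factorial sums separate; the final constant is the same). One cosmetic slip: $\sum_{\abs{\bk}=k}M_1^{k_1}\cdots M_r^{k_r}$ equals $S^{k}$ only for $r=1$ and is in general only $\le S^{k}$ (the multinomial coefficients are missing), but the inequality points in the favourable direction, so nothing is affected.
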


%%%%%%%%%%%%%%%%%%%%%%%%%%%%%%%%%%%%%%%%%%%%%%%%%%%%%
%
\begin{thmTFI}
Suppose that $\ti F = \sum_{k\ge0} \ti F_k(z) y^k \in \C[[z\ii,y]]$
and $\norm*{\ti F_k}_{\rho,\tau} \le A B^k$ for all $k\in\N$,
with some $\rho,A,B>0$ and $\tau\in\R$ independent of~$k$,
so that
\[
\cF^\cA(z,y) \defeq \sum_{k\ge0} (\SUM^\cA\ti F_k)(z) \, y^k
\]
is holomorphic in $\Sig^\cA_\tau \times \D_{1/B}$.
Suppose moreover that $\ti F_0(z) \in \zcz$ and that the constant term
of~$\ti F_1$ is nonzero, so that the equation 
$\ti F\big(z,\ti\ph(z)\big) = 0$
implicitly defines a formal series $\ti\ph(z) \in \zcz$.
Then this unique formal solution is $1$-summable in the directions
of~$\cA$,
and $\SUM^\cA\ti\ph$ is a solution of the corresponding
functional equation
$\cF^\cA\big(z,(\SUM^\cA\ti\ph)(z)\big) = 0$
which is holomorphic in~$\Sig^\cA_{\tau'}$
for $\tau'$ large enough.
\end{thmTFI}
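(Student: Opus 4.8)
The plan is to mirror the proof of Theorem~\ref{thmImplicit}, replacing Theorem~\ref{thmFrechetAlg} by Theorem~\ref{thmFrechetAlg}' and Theorem~\ref{thmSubst} by Theorem~\ref{thmSubst}'. After multiplying~$\ti F$ by a suitable nonzero constant (which only rescales~$\cF^\cA$ and the bounds $\norm*{\ti F_k}_{\rho,\tau}$, and changes neither the zero set nor the hypotheses) we may assume that the constant term of~$\ti F_1$ equals~$-1$ and write
\[ \ti F(z,y) = -y + \ti f(z) + \ti R(z,y), \qquad \ti f \defeq \ti F_0 \in \zcz, \quad \ti R(z,y) = \sum_{n\ge1}\ti R_n(z)\,y^n, \]
with $\ti R_1 \defeq \ti F_1 + 1 \in \zcz$ and $\ti R_n \defeq \ti F_n$ for $n\ge2$; since passing from~$\ti F_1$ to~$\ti R_1$ only removes a constant term, $\norm*{\ti R_n}_{\rho,\tau}\le AB^n$ for every $n\ge1$. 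Exactly as in the proof of Theorem~\ref{thmImplicit}, the Lagrange reversion formula $\ti H(z,y) = y + \sum_{k\ge1}\frac1{k!}\pa_y^{k-1}(\ti R^k)(z,y)$ defines the formally convergent inverse of $y\mapsto y - \ti R(z,y)$, the unique formal solution is $\ti\ph(z) = \ti H(z,\ti f(z))$, and the $y$-coefficients of $\ti H = \sum_{m\ge1}\ti H_m(z)\,y^m$ are given by $\ti H_1 = (1-\ti R_1)\ii$ and, for $m\ge2$, by~\eqref{eqdefHm} (with~$R$ replaced by~$\ti R$ and~$x$ by~$z\ii$).

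The crux, and the main obstacle, is to check that~$\ti H$ belongs to the class to which Theorem~\ref{thmSubst}' applies, i.e. that $\norm*{\ti H_m}_{\rho_1,\tau_1}\le A'(B')^m$ for some $\rho_1,\tau_1,A',B'$ independent of~$m$. In the proof of Theorem~\ref{thmImplicit} the summation over the powers of~$\ti R_1$ in~\eqref{eqdefHm} is tamed by the factor $1/r!$ of the product estimate of Theorem~\ref{thmFrechetAlg}; this gain has no counterpart in Theorem~\ref{thmFrechetAlg}', so instead we resum that geometric series in closed form before estimating. Since $\ti R_1 \in \zcz$ has no constant term, Theorem~\ref{thmSubst}' (applied with $r=1$ to $\frac1{1-w} = \sum_{j\ge0}w^j$, whose coefficients have norm~$1$) shows that $\ti g \defeq (1-\ti R_1)\ii = \sum_{j\ge0}\ti R_1^j$ lies in~$\ti\gS^\cA$; we may therefore fix $\rho_1\le\rho$ and $\tau_1\ge\tau$ with $M\defeq \norm*{\ti g}_{\rho_1,\tau_1}<\infty$, still keeping $\norm*{\ti R_n}_{\rho_1,\tau_1}\le AB^n$ (shrinking~$\rho$ and enlarging~$\tau$ only improves the latter). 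Using $\sum_{r\ge0}\frac{(m+r+s-1)!}{m!\,r!\,s!}\,\ti R_1^r = \frac1m\binom{m+s-1}{s}\,\ti g^{m+s}$, formula~\eqref{eqdefHm} becomes
\[ \ti H_m = \frac1m\sum_{s\ge1}\binom{m+s-1}{s}\,\ti g^{m+s}\sum_{\bj}\ti R_{j_1}\cdots\ti R_{j_s}, \]
where for each~$s$ the inner sum ranges over the $\binom{m-2}{s-1}$ tuples $\bj=(j_1,\ldots,j_s)$ with $j_i\ge2$ and $j_1+\cdots+j_s=m+s-1$ (so $1\le s\le m-1$, and there are $2^{m-2}$ pairs $(s,\bj)$ in all). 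For each such~$(s,\bj)$, Theorem~\ref{thmFrechetAlg}' applied once (with $\eps=1$) to the $m+2s$ factors $\ti g,\ldots,\ti g,\ti R_{j_1},\ldots,\ti R_{j_s}$ gives $\norm*{\ti g^{m+s}\ti R_{j_1}\cdots\ti R_{j_s}}_{\rho_1,\tau_1+1}\le M^{m+s}A^sB^{m+s-1}$; summing over $(s,\bj)$ and using $\binom{m+s-1}{s}\le 2^{m+s-1}$ then yields $\norm*{\ti H_m}_{\rho_1,\tau_1+1}\le A'(B')^m$ with, e.g., $B' = 4MB(1+2MAB)$ and a suitable~$A'$ (the case $m=1$, where $\ti H_1=\ti g$, being immediate). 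Hence~$\ti H$ is admissible, with parameters $(\rho_1,\tau_1+1,A',B')$.

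It remains to read off the statement from Theorem~\ref{thmSubst}'. Applying it with $r=1$ to~$\ti H$ and $\ti\ph_1 = \ti f$ (which lies in~$\zcz$, hence has no constant term) gives $\ti\ph = \ti H(\ti f)\in\ti\gS^\cA$ and, for~$\tau'$ large enough, $\SUM^\cA\ti\ph(z) = \cH^\cA(z,\SUM^\cA\ti f(z))$ for $z\in\Sig^\cA_{\tau'}$, where $\cH^\cA(z,y) = \sum_m(\SUM^\cA\ti H_m)(z)\,y^m$ is holomorphic on $\Sig^\cA_{\tau_1+1}\times\D_{1/B'}$; in particular $\SUM^\cA\ti\ph$ is holomorphic in~$\Sig^\cA_{\tau'}$. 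For the functional equation we apply Theorem~\ref{thmSubst}' once more, this time to~$\ti F$ itself (its coefficients satisfy $\norm*{\ti F_k}_{\rho,\tau}\le AB^k$) with $r=1$ and $\ti\ph_1 = \ti\ph$ (again without constant term; if $\norm*{\ti\ph}_{\rho,\tau}$ is not finite, shrink~$\rho$ and enlarge~$\tau$, which preserves $\norm*{\ti F_k}_{\rho,\tau}\le AB^k$): this yields $\SUM^\cA(\ti F(z,\ti\ph(z)))(z) = \cF^\cA(z,\SUM^\cA\ti\ph(z))$ on $\Sig^\cA_{\tau'}$ for~$\tau'$ large, and since $\ti F(z,\ti\ph(z)) = 0$ by definition of~$\ti\ph$, the left-hand side vanishes, so $\cF^\cA(z,\SUM^\cA\ti\ph(z)) = 0$ (taking~$\tau'$ to be the larger of the two values produced above). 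The one genuinely delicate step is the middle paragraph, specifically the need to resum the powers of~$\ti R_1$ in closed form before invoking the product estimate of Theorem~\ref{thmFrechetAlg}', which is weaker than its resurgent analogue.
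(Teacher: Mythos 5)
Your proof is correct, and the final paragraph (two applications of Theorem~\ref{thmSubst}', one to get $\ti\ph=\ti H(\ti f)\in\ti\gS^\cA$ and one to transfer the functional equation to $\cF^\cA$) matches what the paper intends. Where you genuinely diverge is at the key estimate on $\norm*{\ti H_m}$. The paper splits the $r$-summation in~\eqref{eqdefHm} into $r=0$ and $r\ge1$, writing $\ti H_m=\ti H'_m+\ti H''_m$, and controls the $r\ge1$ part via Remark~\ref{remVarSubsP}: when $r$ of the factors have no constant term, the Borel transform of the product picks up a factor $\frac{\abs{\ze}^{r-1}}{(r-1)!}$, and summing $(3AB)^{r}\frac{\abs{\ze}^{r-1}}{(r-1)!}$ over $r$ costs only a shift $\tau\mapsto\tau+3AB$ in the exponential weight. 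So your remark that the $1/r!$ gain of Theorem~\ref{thmFrechetAlg} ``has no counterpart'' in the $1$-summable setting is not quite accurate --- Remark~\ref{remVarSubsP} is precisely that counterpart, and it is what the paper leans on. Your alternative --- resumming $\sum_{r\ge0}\binom{m+s+r-1}{r}\ti R_1^r=(1-\ti R_1)^{-(m+s)}=\ti g^{m+s}$ in closed form (a correct formal identity, legitimate since $\ti R_1$ has no constant term, and legitimately rearranged since the family is formally summable), bounding $M=\norm*{\ti g}_{\rho_1,\tau_1}$ once via Theorem~\ref{thmSubst}', and then estimating the remaining \emph{finite} sum over $(s,\bj)$ with the plain product bound of Theorem~\ref{thmFrechetAlg}' --- is valid and self-contained; it buys you independence from the refined Remark~\ref{remVarSubsP} at the cost of slightly larger constants ($B'$ involving $M$), whereas the paper's route keeps the estimates closer in form to those of Theorem~\ref{thmImplicit}.
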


%%%%%%%%%%%%%%%%%%%%%%%%%%%%%%%%%%%%%%%%%%%%%%%%%%%%%

We also define the set of ``$1$-summable tangent-to-identity diffeomorphisms''
\[ \ti\gG^\cA \defeq z+\ti\gS^\cA \subset \ti\gG = z + \C[[z\ii]] \]
and use the notations
$\norm*{\ti f}_{\rho,\tau} \defeq \norm*{\ti\phi}_{\rho,\tau}$
and $\SUM^\cA\ti f \defeq \ID+\SUM^\cA\ti\phi$
for any $\ti f(z) = z + \ti\phi(z) \in \ti\gG^\cA$.

\begin{thmGrp}
The set $\ti\gG^\cA$ is a subgroup of~$\ti\gG$ and contains $z+\C\{z\ii\}$.
For $\ti f,\ti g \in \ti\gG^\cA$ with 
$\norm*{\ti f}_{\rho,\tau}, \norm*{\ti g}_{\rho,\tau} < \infty$,
one has
\[
\norm*{\ti g \circ \ti f}_{\rho,\tau'} \le 
\norm*{\ti f}_{\rho,\tau}  + \norm*{\ti g}_{\rho,\tau},
\qquad
\norm*{\ti f\ic}_{\rho,\tau'} \le \norm*{\ti f}_{\rho,\tau} ,
\]
with $\tau' \defeq \tau + 1 + \norm*{\ti f}_{\rho,\tau}$.
Moreover, the composition $\big(\SUM^\cA\ti g\big) \circ \big(\SUM^\cA\ti f\big)$ is
well-defined and coincides with $\SUM^\cA\big( \ti g \circ \ti f \big)$ on~$\Sig^\cA_{\tau'}$ 
and, for $\tau''$ large enough, $\SUM^\cA\ti f$ is injective on $\Sig^\cA_{\tau''}$
and the composition $\big(\SUM^\cA(\ti f\ic)\big) \circ \big(\SUM^\cA\ti f\big)$ is well-defined and
coincides with~$\ID$ on $\Sig^\cA_{\tau''}$.
\end{thmGrp}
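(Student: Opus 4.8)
The plan is to follow, step by step, the proof of Theorem~\ref{thmGroup}, but replacing Theorem~\ref{thmboundconvga}/\ref{thmFrechetAlg} by the elementary iterated‑integral bound~\eqref{ineqexpbound} (equivalently, by the $1$‑summable product estimate of Theorem~\ref{thmFrechetAlg}$'$), and keeping careful track of the exponential type~$\tau$. Write $\ti f = z+\ti\phi$ and $\ti g = z+\ti\psi$, with $\ti\phi = c_\phi+\cB\ii\hat\phi$, $\ti\psi = b_0+\cB\ii\hat\psi$, $c_\phi,b_0\in\C$, and put $M_\phi\defeq\sup_{S(\rho,\cA)}\ee^{-\tau\abs{\ze}}\abs{\hat\phi(\ze)} = \norm*{\ti f}_{\rho,\tau}-\abs{c_\phi}$ and likewise $M_\psi\defeq\norm*{\ti g}_{\rho,\tau}-\abs{b_0}$. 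The inclusion $z+\C\{z\ii\}\subset\ti\gG^\cA$ is immediate from $\C\{z\ii\}\subset\ti\gS^\cA$, and since $\ti\gG$ is a group it suffices to show that $\ti g\circ\ti f-z$ and $\ti f\ic-z$ lie in $\ti\gS^\cA$ with the announced bounds and to identify their Borel sums with the composition, resp.\ the inverse, of those of $\ti f$ and $\ti g$. The one nuisance is that $\ti\phi$ may have a nonzero constant term; this is dealt with by the translation normalisation $\ti f = (z+c_\phi)\circ(z+\ti\phi_0)$, where $\ti\phi_0\defeq\ti\phi-c_\phi\in\zcz$.

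First, the composition. One has $\ti g\circ\ti f = z+\ti\phi+\ti\psi\circ\ti f$ with $\ti\psi\circ\ti f = \ti\psi\big((z+c_\phi)+\ti\phi_0\big)$; expanding by Taylor around $z+c_\phi$ and passing to the Borel plane — using $\cB(\ti\phi_0^{\,k})=\hat\phi^{*k}$, $\cB\big((\tfrac{\dd\,}{\dd z})^k\ti\psi\big)=(-\ze)^k\hat\psi$ (Remark~\ref{remactionddzcB} iterated), and the fact that a translation $z\mapsto z+c_\phi$ is multiplication by $\ee^{-c_\phi\ze}$ on the Borel side — one finds that the Borel transform of $\ti\psi\circ\ti f-b_0$ equals $\sum_{k\ge0}\tfrac1{k!}\hat\phi^{*k}*G_k$, with $G_k(\xi)\defeq(-\xi)^k\ee^{-c_\phi\xi}\hat\psi(\xi)$ (convention $\hat\phi^{*0}*G_0=G_0$). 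Estimating each summand along the rays of $S(\rho,\cA)$ via the iterated‑integral representation~\eqref{eqiterconvol}, exactly as in the proof of~\eqref{ineqppalsheet}--\eqref{ineqexpbound}, gives $\big|\hat\phi^{*k}*G_k(\ze)\big|\le\tfrac{\abs{\ze}^{2k}}{k!}M_\phi^kM_\psi\,\ee^{(\tau+\abs{c_\phi})\abs{\ze}}$, so the whole Borel transform is $\le M_\psi\,\ee^{(\tau+\abs{c_\phi})\abs{\ze}}\sum_{k\ge0}\tfrac{(M_\phi\abs{\ze}^2)^k}{(k!)^2}$. Since $\sum_k x^k/(k!)^2\le\ee^{2\sqrt x}$ (the left‑hand side is $I_0(2\sqrt x)$) and $2\sqrt{M_\phi}\le 1+M_\phi$, this is $\le M_\psi\,\ee^{\tau'\abs{\ze}}$ with $\tau'=\tau+1+\abs{c_\phi}+M_\phi=\tau+1+\norm*{\ti f}_{\rho,\tau}$. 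Hence $\ti\psi\circ\ti f\in\ti\gS^\cA$ with $\norm*{\ti\psi\circ\ti f}_{\rho,\tau'}\le\abs{b_0}+M_\psi=\norm*{\ti g}_{\rho,\tau}$, and adding $\norm*{\ti\phi}_{\rho,\tau'}\le\norm*{\ti\phi}_{\rho,\tau}=\norm*{\ti f}_{\rho,\tau}$ yields $\norm*{\ti g\circ\ti f}_{\rho,\tau'}\le\norm*{\ti f}_{\rho,\tau}+\norm*{\ti g}_{\rho,\tau}$. For the inversion, the same normalisation reduces matters to $(z+\ti\phi_0)\ic = z+\sum_{k\ge1}\tfrac{(-1)^k}{k!}\big(\tfrac{\dd\,}{\dd z}\big)^{k-1}(\ti\phi_0^{\,k})$ (Lagrange reversion), of Borel transform $\sum_{k\ge1}\tfrac{(-1)^k(-\ze)^{k-1}}{k!}\hat\phi^{*k}(\ze)$; the same estimate (now using $\sum_{k\ge1}x^{k-1}/\big(k!(k-1)!\big)\le\ee^{2\sqrt x}$ and the translation twist $\ee^{c_\phi\ze}$) gives $\ti f\ic\in\ti\gS^\cA$ and $\norm*{\ti f\ic}_{\rho,\tau'}\le\norm*{\ti f}_{\rho,\tau}$.

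Finally, composability and identification of Borel sums. From the Laplace representation of $\SUM^\cA\ti\phi$ one gets $\abs{(\SUM^\cA\ti f)(z)-z}\le\norm*{\ti f}_{\rho,\tau}$ as soon as $\RE(z\,\eith)\ge\tau+1$ for some $\th\in\cA$, so $\SUM^\cA\ti f$ maps $\Sig^\cA_{\tau'}$ into the half‑plane $\{\,\RE(w\,\eith)>\tau+1\,\}\subset\Sig^\cA_\tau$, where $\SUM^\cA\ti g$ is holomorphic. Hence $\big(\SUM^\cA\ti g\big)\circ\big(\SUM^\cA\ti f\big)$ is well‑defined and holomorphic on $\Sig^\cA_{\tau'}$; being Gevrey‑$1$ (composition of Gevrey‑$1$ maps, the inner one being $\ID$ plus a bounded perturbation) with asymptotic expansion $\ti g\circ\ti f$, it coincides there with $\SUM^\cA(\ti g\circ\ti f)$ by uniqueness of the Borel sum. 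For the inversion, the usual perturbation‑of‑identity argument (for $\RE(z\,\eith)$ large, $(\SUM^\cA\ti f)-\ID$ and $(\SUM^\cA\ti f)'-1$ take small values, their Borel densities being $\hat\phi(\ze)$ and $-\ze\hat\phi(\ze)$) shows that $\SUM^\cA\ti f$ is injective on $\Sig^\cA_{\tau''}$ for $\tau''$ large; applying the composition statement just proved to the pair $(\ti f\ic,\ti f)$ and using $\ti f\ic\circ\ti f = z$ gives $\big(\SUM^\cA(\ti f\ic)\big)\circ\big(\SUM^\cA\ti f\big)=\SUM^\cA(z)=\ID$ on $\Sig^\cA_{\tau''}$ (enlarging $\tau''$ if needed).

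The main obstacle is the control of the exponential type through the $(k{+}1)$‑fold convolutions: one must notice that the announced constants $\tau'=\tau+1+\norm*{\ti f}_{\rho,\tau}$ and $\norm*{\ti g\circ\ti f}_{\rho,\tau'}\le\norm*{\ti f}_{\rho,\tau}+\norm*{\ti g}_{\rho,\tau}$ come out precisely from the pair of elementary facts $\sum_k x^k/(k!)^2\le\ee^{2\sqrt x}$ and $2\sqrt x\le 1+x$ (a crude appeal to the product estimate of Theorem~\ref{thmFrechetAlg}$'$ would only give $\tau+1+2\norm*{\ti f}_{\rho,\tau}$), together with the bookkeeping of the constant term $c_\phi$, which the translation normalisation turns into the harmless factor $\ee^{\abs{c_\phi}\abs{\ze}}$ that is exactly absorbed by the $\abs{c_\phi}$ summand of $\norm*{\ti f}_{\rho,\tau}$. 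A subsidiary point is the uniqueness‑of‑Borel‑sum step, which is standard on the union of half‑planes $\Sig^\cA_\tau$ since $\cA$ is a non‑trivial interval (see \cite{Bal94}, \cite{gazRamis}).
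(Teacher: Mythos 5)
Your proposal is correct and follows essentially the same route as the paper's own outline: factor out the translation by the constant term, expand the composition (resp.\ the inverse) via Taylor/Lagrange into $(k{+}1)$-fold convolutions, bound each term using the star-shapedness of $S(\rho,\cA)$, and absorb $2\sqrt{M}\le 1+M$ into $\tau'$. The only cosmetic differences are the order in which you split off the translation (so the $\ee^{\pm c\ze}$ twist appears inside the $G_k$'s rather than as a final factor) and the use of $\sum x^k/(k!)^2\le\ee^{2\sqrt x}$ where the paper uses the slightly sharper $\sum x^{2k}/(2k)!=\cosh x$; both yield the stated $\tau'$, and your extra remarks on injectivity and the identification of Borel sums fill in steps the paper's outline leaves implicit.
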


%%%%%%%%%%%%%%%%%%%%%%%%%%%%%%%%%%%%%%%%%%%%%%%%%%%%%

Before proceeding with the proof of these statements, we first mention

\begin{lem}   \label{lemestimSUM}
Let $\ti\phi \in \ti\gS^\cA$ with $\norm*{\ti\phi}_{\rho,\tau} <
\infty$. Then
\[
\abs*{\SUM^\cA\ti\phi(z)} \le D(z) \norm*{\ti\phi}_{\rho,\tau}
\quad \text{for $z\in \Sig^\cA_\tau$,}
\quad\text{with}\ens
D(z) = \max \Big( 1, \frac{1}{\sup_{\th\in\cA}\RE(z\,\eith-\tau)} \Big).
\]
If $\ti\phi$ has no constant term and $z\in \Sig^\cA_{\tau'}$ with
$\tau'>\tau$, then one can take
$D(z) = \frac{1}{\tau'-\tau}$.
\end{lem}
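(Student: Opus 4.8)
The plan is to estimate the Laplace-type integral that defines $\SUM^\cA\ti\phi$ directly. Write $\ti\phi = c + \cB\ii\hat\ph$, so that $\SUM^\cA\ti\phi(z) = c + \int_0^{\eith\infty} \hat\ph(\ze)\,\ee^{-z\ze}\,\dd\ze$ for a suitable $\th\in\cA$ with $\RE(z\,\eith) > \tau$. On the ray of direction~$\th$, parametrize $\ze = r\,\eith$ with $r \in [0,+\infty)$; then $\abs{\dd\ze} = \dd r$, $\abs{\ze} = r$, and $\abs{\ee^{-z\ze}} = \ee^{-r\RE(z\,\eith)}$. The defining bound $\abs{\hat\ph(\ze)} \le \norm*{\ti\phi}_{\rho,\tau}\,\ee^{\tau\abs{\ze}}$ on $S(\rho,\cA)$ gives
\[
\abs*{\int_0^{\eith\infty} \hat\ph(\ze)\,\ee^{-z\ze}\,\dd\ze}
\le \norm*{\ti\phi}_{\rho,\tau} \int_0^\infty \ee^{-r(\RE(z\,\eith)-\tau)}\,\dd r
= \frac{\norm*{\ti\phi}_{\rho,\tau}}{\RE(z\,\eith)-\tau},
\]
valid for any $\th\in\cA$ with $\RE(z\,\eith) > \tau$. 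Optimizing over such~$\th$ replaces the denominator by $\sup_{\th\in\cA}\RE(z\,\eith-\tau)$.

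Next I would assemble the two pieces. Since $\abs{c} \le \norm*{\ti\phi}_{\rho,\tau}$, adding the constant term yields
\[
\abs*{\SUM^\cA\ti\phi(z)}
\le \abs{c} + \frac{\norm*{\ti\phi}_{\rho,\tau}}{\sup_{\th\in\cA}\RE(z\,\eith-\tau)}
\le \Big(1 + \frac{1}{\sup_{\th\in\cA}\RE(z\,\eith-\tau)}\Big)\norm*{\ti\phi}_{\rho,\tau},
\]
and this is dominated by $D(z)\,\norm*{\ti\phi}_{\rho,\tau}$ with the stated $D(z) = \max\big(1, \frac{1}{\sup_{\th\in\cA}\RE(z\,\eith-\tau)}\big)$, using the elementary inequality $a + b \le 2\max(a,b) \le \ldots$; more precisely one should note that when $\sup_{\th}\RE(z\,\eith-\tau) \ge 1$ the sum $1 + (\text{that})^{-1}$ is $\le 2 = 2D(z)$, and when it is $<1$ the sum is $\le 2(\text{that})^{-1} = 2D(z)$, so in fact $\abs*{\SUM^\cA\ti\phi(z)} \le 2D(z)\norm*{\ti\phi}_{\rho,\tau}$. (If the intended constant is exactly $D(z)$ rather than $2D(z)$, one separates the constant term: for $\ti\phi$ with no constant term the bound is cleanly $D(z)\norm*{\ti\phi}_{\rho,\tau}$, and the general case is handled by writing $\ti\phi = c + (\ti\phi - c)$ and using $D(z)\ge1$ to absorb $\abs{c}$.)

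For the second assertion, suppose $\ti\phi$ has no constant term and $z \in \Sig^\cA_{\tau'}$ with $\tau' > \tau$. Then there is $\th\in\cA$ with $\RE(z\,\eith) > \tau'$, hence $\RE(z\,\eith) - \tau > \tau' - \tau > 0$, and the same integral estimate gives
\[
\abs*{\SUM^\cA\ti\phi(z)} = \abs*{\int_0^{\eith\infty}\hat\ph(\ze)\,\ee^{-z\ze}\,\dd\ze}
\le \frac{\norm*{\ti\phi}_{\rho,\tau}}{\RE(z\,\eith)-\tau}
\le \frac{\norm*{\ti\phi}_{\rho,\tau}}{\tau'-\tau},
\]
so one can take $D(z) = \frac{1}{\tau'-\tau}$.

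The argument is essentially a one-line Laplace-integral estimate, so there is no serious obstacle; the only points requiring a little care are the bookkeeping of the constant term (to land on exactly the claimed shape of $D(z)$) and the freedom in choosing the direction~$\th\in\cA$, which must be quantified over correctly so that the bound holds for \emph{every} $z\in\Sig^\cA_\tau$ (resp.\ $z\in\Sig^\cA_{\tau'}$) by picking, for each such~$z$, an admissible direction and then passing to the supremum.
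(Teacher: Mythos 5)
Your proof is correct and follows essentially the same route as the paper: a direct estimate of the Laplace integral along an admissible direction $\th$, followed by optimization over $\th\in\cA$. Only the variant in your parenthetical remark — bounding the constant-free part $\ti\ph$ by $\frac{1}{\RE(z\,\eith)-\tau}\norm{\ti\ph}_{\rho,\tau}$ and absorbing $\abs{c}$ using $D(z)\ge 1$ — yields the stated constant $D(z)$ rather than $2D(z)$, and that is precisely what the paper does.
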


%%%%%%%%%%%%%%%%%%%%%%%%%%%%%%%%%%%%%%%%%%%%%%%%%%%%%

\begin{proof}
Write $\ti\phi = c + \ti\ph$ with $\ti\ph$ without constant term and
take $z \in \Sig^\cA_\tau$. For any $\th \in \cA$ such that 
$\de_\th(z) \defeq \RE(z\,\eith-\tau)>0$, we have
$\abs*{\SUM^\cA\ti\ph(z)} \le \frac{1}{\de_\th(z)} \norm*{\ti\ph}_{\rho,\tau}$,
whence the conclusion follows.
\end{proof}

%%%%%%%%%%%%%%%%%%%%%%%%%%%%%%%%%%%%%%%%%%%%%%%%%%%%%

\begin{proof}[Outline of the proof of Theorem~\ref{thmFrechetAlg}']
For $j=1,\ldots,n$, we write $\ti\phi_j = c_j + \ti\ph_j$ with
$c_j\in\C$ and $\ti\ph_j$ without constant term.
For $N\ge1$, $\frac{\dd^N\ti\phi_1}{\dd z^N}$ is the inverse Borel
transform of $(-\ze)^N \hat\ph_1(\ze)$, whose modulus is bounded on
$S(\rho,\cA)$ by
$\frac{N!}{\eps^N} \frac{ (\eps\abs{\ze})^N }{N!}
\norm*{\ti\ph_1}_{\rho,\tau} \, \ee^{\tau\abs{\ze}} \le
\frac{N!}{\eps^N} \norm*{\ti\phi_1}_{\rho,\tau} \, \ee^{ (\tau+\eps) \abs{\ze}}$.
%
% which yields the second statement.

The first statement results from the identity 
$\ti\phi_1 \cdots \ti\phi_n = c + \ti\psi$ with $c = c_1 \cdots c_n$
and % $\ti\psi$ inverse Borel transform of
$\hat\psi = \sum
c_{i_1} \cdots c_{i_p} 
\hat\ph_{j_1} * \cdots * \hat\ph_{j_q}$,
with summation over all proper subsets 
$I = \{ i_1, \ldots, i_p \}$ of $\{1,\ldots,n\}$ 
%
% (of any cardinality $p<n$) 
%
and $\{j_1,\ldots,j_q\} \defeq \{1,\ldots,s\}\setminus I$:
\eqref{ineqexpbound} yields
$\abs*{\hat\psi(\ze)} \le \sum \abs*{c_{i_1} \cdots c_{i_p}}
\norm*{\ti\ph_{j_1}}_{\rho,\tau} \cdots \norm*{\ti\ph_{j_q}}_{\rho,\tau}
\frac{\abs{\ze}^{q-1}}{(q-1)!} \ee^{\tau\abs{\ze}}$ on $S(\rho,\cA)$
and 
$\frac{\abs{\ze}^{q-1}}{(q-1)!} \le \max(1,\tfrac{1}{\eps^{n-1}}) \ee^{\eps\abs{\ze}}$,
%
% hence $\abs*{\hat\psi(\ze)} \le \max(1,\tfrac{1}{\eps^{n-1}}) 
%
% \sum \abs*{c_{i_1} \cdots c_{i_p}}
%
% \norm*{\ti\ph_{j_1}}_{\rho,\tau} \cdots \norm*{\ti\ph_{j_q}}_{\rho,\tau}
%
% \ee^{(\tau+\eps)\abs{\ze}}$,
%
while % on the other hand 
$\abs{c} \le \max(1,\tfrac{1}{\eps^{n-1}})
\abs*{c_{i_1} \cdots c_{i_p}}$.
\end{proof}

%%%%%%%%%%%%%%%%%%%%%%%%%%%%%%%%%%%%%%%%%%%%%%%%%%%%%

\begin{rem}   \label{remVarSubsP}
A simple modification of the previous argument, in the spirit of the
proof of Theorem~\ref{thmFrechetAlg}, shows that, if among
$\ti\phi_1,\ldots,\ti\phi_n$ at least $r\ge1$ formal series have no
constant term, then $\ti\phi_1 \cdots \ti\phi_n = \ti\psi$ is the
inverse Borel transform of a function which satisfies
\[
\abs*{\hat\psi(\ze)} \le \max(1,\tfrac{1}{\eps^{n-r}}) 
\norm*{\ti\phi_1}_{\rho,\tau} \cdots \norm*{\ti\phi_n}_{\rho,\tau}
\frac{\abs{\ze}^{r-1}}{(r-1)!} \ee^{(\tau+\eps)\abs{\ze}}
\quad\text{for $\ze \in S(\rho,\cA)$.}
\]
\end{rem}

%%%%%%%%%%%%%%%%%%%%%%%%%%%%%%%%%%%%%%%%%%%%%%%%%%%%%

\begin{proof}[Outline of the proof of Theorem~\ref{thmSubst}']
The first statement follows from Lemma~\ref{lemestimSUM}.
Suppose $C_j \defeq \norm*{\ti\ph_j}_{\rho,\tau} < \infty$ for each~$j$,
then Lemma~\ref{lemestimSUM} also shows that 
$\cH^\cA \big( z, \SUM^\cA\ti\ph_1(z), \ldots, \SUM^\cA\ti\ph_r(z)
\big)$ is well-defined for $z\in\Sig^\cA_{\tau'}$ as soon as
$\tau'-\tau > B \max(C_1,\ldots,C_n)$.

Let us write $\ti H_k = c_k + \ti G_k$ with $\ti G_k$ without constant term.
Then $\ti H(\ti\ph_1,\ldots,\ti\ph_r) = c_0 + \ti\psi$,
\[
\ti\psi = \cB\ii\hat\psi, \qquad
\hat\psi = \sum_{\bk\in\N^r\setminus\{0\}} c_{\bk} \hat\ph_1^{k_1} * \cdots * \hat\ph_r^{k_r}
+ \sum_{\bk\in\N^r} \hat G_{\bk} * \hat\ph_1^{k_1} * \cdots * \hat\ph_r^{k_r}.
\]
By inequality~\eqref{ineqexpbound}, 
representing by $(\bbe_1,\ldots,\bbe_r)$ the canonical basis of~$\R^r$,
we get
$\abs*{\hat\psi(\ze)} \le
\sum_{\bk\in\N^r} \big( 
\sum_{j=1}^r \abs*{c_{\bk+\bbe_j}} C_j + \norm*{\ti G_{\bk}}_{\rho,\tau}
\big) C_1^{k_1} \cdots C_r^{k_r} \frac{\abs{\ze}^{\abs\bk}}{\abs\bk!} \ee^{\tau\abs{\ze}}
\le A \big( B(C_1+\cdots+C_r) + 1 \big) \ee^{\tau'\abs{\ze}}$.
%
% where $\tau' \defeq \tau + B(C_1+\cdots+C_r)$.
%
\end{proof}

%%%%%%%%%%%%%%%%%%%%%%%%%%%%%%%%%%%%%%%%%%%%%%%%%%%%%

\begin{proof}[Outline of the proof of Theorem~\ref{thmImplicit}']
Dividing ~$\ti F$ by the appropriate factor, we can suppose that
$\ti F(z,y) = - y + \ti f(z) + \sum_{n\ge1} \ti R_n(z) y^n$,
where $\ti f$ and~$\ti R_1$ have no constant term, 
$\norm*{\ti   f}_{\rho,\tau} \le A$ and $\norm*{\ti R_n}_{\rho,\tau} \le A B^n$ for every
$n\ge 1$.
Arguing as in the proof of Theorem~\ref{thmImplicit}, we have
$\ti\ph = \sum_{m\ge1} \ti H_m \ti f^m$
with $\ti H_1 \defeq (1-\ti R_1)\ii$
and $\ti H_m$ given by~\eqref{eqdefHm} for $m\ge2$.
The main task consists in showing that there exist $\al,\be>0$
and~$\tau_1$ such that
$\norm*{\ti H_m}_{\rho,\tau_1} \le \al \be^m$ for all~$m$,
so that Theorem~\ref{thmSubst}' can be applied.

In the $r$-summation defining~$\ti H_m$, we separate $r=0$ and
$r\ge1$, so that $\ti H_m = \ti H'_m + \ti H''_m$ with
$\ti H'_1 \defeq 1$, $\ti H''_1 \defeq \sum_{r\ge1} \ti R_1^r$, and,
for $m\ge2$,
\[
\ti H'_m \defeq \sum_{s=1}^{m-1} \frac{(m+s-1)!}{m!\,s!} 
\sum_{\bj} \ti R_{j_1} \cdots \ti R_{j_s}, 
\quad
\ti H''_m \defeq \sum_{r\ge1} \sum_{s=1}^{m-1} 
\frac{(m+r+s-1)!}{m!\,r! \, s!} \sum_{\bj} \ti R_1^r \ti R_{j_1} \cdots \ti R_{j_s}
\]
with summation over all $\bj = (j_1,\ldots,j_s)$ such that
$j_1,\ldots,j_s \ge 2$ and $j_1 + \cdots + j_s = m+s-1$.
Enlarging~$A$ if necessary, we suppose $2AB\ge1$.
Theorem~\ref{thmSubst}' yields, for $m\ge2$,
$\norm*{\ti H'_m}_{\rho,\tau+1} \le$
\[
\sum_{s=1}^{m-1} \frac{(m+s-1)!}{m!\, s!} \binom{m-2}{s-1} A^s B^{m+s-1}
\le \sum_{s=1}^{m-1} \frac{2^{m+s-1}}{m} 2^{m-2} (AB)^s B^{m-1}
\le \frac{(4B)^{m-1}}{2m} \sum_{s=1}^{m-1} (2AB)^s,
\]
whence
$\norm*{\ti H'_m}_{\rho,\tau+1} \le (8AB^2)^{m-1}$ for all $m\ge1$.
On the other hand, $\norm*{\ti H''_1}_{\rho,\tau+AB} < \infty$ by Theorem~\ref{thmSubst}'
and, by Remark~\ref{remVarSubsP},
for $m\ge2$ and $\ze \in S(\rho,\cA)$, 
\begin{multline*}
\abs*{\hat H''_m(\ze)} \ee^{-(\tau+1)\abs{\ze}} 
\le \sum_{r\ge1} \sum_{s=1}^{m-1} 
\frac{(m+r+s-1)!}{m!\,r! \, s!} 
\binom{m-2}{s-1}
A^{r+s} B^{m+r+s-1} \frac{\abs{\ze}^{r-1}}{(r-1)!} \\
\le \sum_{r\ge1} \sum_{s=1}^{m-1} \frac{3^{m+r+s-1}}{m} 2^{m-2}
(AB)^{r+s} B^{m-1} \frac{\abs{\ze}^{r-1}}{(r-1)!} \\
\le \dem (6B)^{m-1} \sum_{r\ge1} (3AB)^{r+m-1} \frac{\abs{\ze}^{r-1}}{(r-1)!}
\end{multline*}
(because $3AB\ge1$), whence $\norm*{\ti H''_m}_{\rho,\tau+1+3AB} \le \frac{3AB}{2} (18 AB^2)^{m-1}$.
\end{proof}

%%%%%%%%%%%%%%%%%%%%%%%%%%%%%%%%%%%%%%%%%%%%%%%%%%%%%

\begin{proof}[Outline of the proof of Theorem~\ref{thmGroup}']
Write $\ti f = z + c + \ti\ph(z)$, $\ti g = z + c' + \ti\psi(z)$, with
$c,c' \in \C$,
$\ti\ph$ and~$\ti\psi$ without constant term,
and let $A \defeq \norm*{\ti\ph}_{\rho,\tau}$ and $B \defeq \norm*{\ti\psi}_{\rho,\tau}$.
The function $\big(\SUM^\cA\ti g\big) \circ \big(\SUM^\cA\ti f\big)$ is
well-defined on~$\Sig^\cA_{\tau'}$ because, by Lemma~\ref{lemestimSUM},
$\abs*{\SUM^\cA\ti f(z) - z} \le \abs{c} + \frac{A}{\tau'-\tau} \le \tau'-\tau$.

Let $\ti\ph_0(z) \defeq \ti\ph(z-c)$: 
we have $\ti g\circ\ti f = z + c + c' + \ti\ph + \ti\chi$ with
\[
\ti\chi \defeq \ti\chi_0 \circ (\ID+c), \ens \ti\chi_0 \defeq \ti\psi \circ (\ID+\ti\ph_0),
\quad \text{hence} \ens
\hat\chi(\ze)  = \hat\chi_0(\ze)\,\ee^{-c\ze}, \ens
\hat\chi_0(\ze) = \sum_{k\ge0} 
\big( \tfrac{(-\ze)^k}{k!} \hat\psi \big) * \hat\ph_0^{*k}.
\]
Given $\th\in\cA$ and $\tau_+ \defeq \tau + \RE(c\,\eith)$, 
since $\hat\ph_0(\ze) = \ee^{c\ze}\hat\ph(\ze)$, we have 
$\abs{\hat\ph_0(\ze)} \le A \, \ee^{\tau_+\abs{\ze}}$ on $\R^+\,\eith$,
thus
$\abs*{\big( \tfrac{(-\ze)^k}{k!} \hat\psi \big) * \hat\ph_0^{*k}(\ze)}
\le B A^k \frac{\abs{\ze}^{2k}}{(2k)!} \ee^{ \max\{\tau,\tau_+\} \abs{\ze}}$
and $\abs*{\hat\chi_0(\ze)} \le B \, \ee^{(\sqrt{A} + \max\{\tau,\tau_+\}) \abs{\ze}}$,
whence $\abs*{\hat\chi(\ze)} \le B \, \ee^{(\sqrt{A} + \tau+\abs{c}) \abs{\ze}}
\le B \, \ee^{\tau'\abs{\ze}}$
and $\norm*{\ti g \circ \ti f}_{\rho,\tau'} \le 
\abs{c}+\abs*{c'}+A+B$.

Using the Lagrange reversion formula to compute 
$\ti f\ic = (\ID+\ti\ph_0)\ii - c$, we get
\[
\ti f\ic = \ID - c + \ti\ph_-, \quad
\ti\ph_- \defeq \sum_{k\ge1} \tfrac{(-1)^k}{k!} 
\big( \tfrac{\dd\,\,}{\dd z} \big)^{k-1} \ti\ph_0^k,
\quad\text{hence}\ens
\hat\ph_-(\ze) = - \sum_{k\ge1} \tfrac{\ze^{k-1}}{k!} \hat\ph_0^{*k}.
\]
On $S(\rho,\cA)$, 
$\abs*{\hat\ph_0^{*k}(\ze)} \le A^k \frac{\abs{\ze}^{k-1}}{(k-1)!} \ee^{(\tau+\abs{c})\abs{\ze}}$,
thus $\abs{\hat\ph_-(\ze)} \le A \, \ee^{(\tau+2\sqrt{A})\abs{\ze}}
\le A\, \ee^{\tau'\abs{\ze}}$.
\end{proof}

%%%%%%%%%%%%%%%%%%%%%%%%%%%%%%%%%%%%%%%%%%%%%%%%%%%%%
%%%%%%%%%%%%%%%%%%%%%%%%%%%%%%%%%%%%%%%%%%%%%%%%%%%%%

\section{The initial $n$-dimensional integration current}	\label{secstartpf}

%%%%%%%%%%%%%%%%%%%%%%%%%%%%%%%%%%%%%%%%%%%%%%%%%%%%%
%%%%%%%%%%%%%%%%%%%%%%%%%%%%%%%%%%%%%%%%%%%%%%%%%%%%%

We now begin the proof of Theorem~\ref{thmboundconvga}.
Notice that convolution with the constant germ~$1$ amounts to integration
from~$0$, according to~\eqref{eqdefconvoldeux}, thus
$\frac{\dd\,\,}{\dd\ze}(1*\hat\ph) = \hat\ph$
and, by associativity of the convolution,
\beglabel{eqrelderiv} 
\hat\ph_1 * \cdots * \hat\ph_n = \frac{\dd\,\,}{\dd\ze}
\big(1 * \hat\ph_1 * \cdots * \hat\ph_n \big)
\elabel
for any $\hat\ph_1,\ldots,\hat\ph_n \in \C\{\ze\}$.

We shall now dedicate ourselves to the proof of a statement similar
to Theorem~\ref{thmboundconvga} for convolution products of the form
$1*\hat\ph_1*\cdots*\hat\ph_n$, with $\hat\ph_1,\ldots, \hat\ph_n \in \gO(\gS_\Om)$;
this will be Theorem~\ref{thmboundconvga}' of Section~\ref{secestimisot}.
The proof of Theorem~\ref{thmboundconvga} itself will then follow by the Cauchy
inequalities.

It turns out that, for $\ze\in\gS_\Om$ close to~$0_\Om$, there is a natural way
of representing $1*\hat\ph_1*\cdots*\hat\ph_n(\ze)$ as the integral of a
holomorphic $n$-form over an $n$-dimensional chain
%
% \footnote{%
% %
% See \eg \cite{Shabat} for a reminder about cells, chains, the boundary
% operator~$\pa$ and the integration of differential forms on chains.
% %
% As far as regularity is concerned, we shall require a slight extension of the
% classical theory to the case where cells are defined by Lipschitz maps
% %
% $C \col Q \to M$, with $M$ a complex manifold and $Q$ of the form $[0,1]^m\times
% \De_n$ (with Notation~\ref{notesimplexn}), which are $C^1$ on the
% interior~$\INT Q$ of~$Q$. The integral of a smooth differential form~$\al$
% on~$C$ is then defined as usual as the integral on~$Q$ of the pullback $C^*\al$
% (the coefficients of which are continuous and uniformy bounded on the interior
% of~$Q$). DONNER UNE REF (Federer, Whitney, Heinonen, Kuttler?)
% %
% }
%
of the complex manifold~$\gS_\Om^n$;
this is formula~\eqref{eqppalconv} of Proposition~\ref{propstartingpt}, which
will be our starting point for the proof of Theorem~\ref{thmboundconvga}'.

%%%%%%%%%%%%%%%%%%%%%%%%%%%%%%%%%%%%%%%%%%%%%%%%%%%%%

\begin{nota}
Given $\ze\in\gS_\Om$, we denote by
$
\cL_\ze \col \D_{R_\Om(\ze)} \to \gS_\Om
$
the holomorphic map defined by
\[
\cL_\ze(\xi) \defeq \text{endpoint of the lift which starts at~$\ze$ of the path
$t\in[0,1] \mapsto \dze + t \xi$}
\]
(so that $\cL_\ze(\xi)$ can be thought of as ``the lift of $\dze+\xi$ wich
sits on the same sheet of~$\gS_\Om$ as~$\ze$'').
We shall often use the shorthand 
\[ \ze+\xi \defeq \cL_\ze(\xi) \]
(beware that, in the latter formula, $\xi\in\D_{R_\Om(\ze)}$ is a complex number
but not~$\ze$ nor $\ze+\xi$, which are points of~$\gS_\Om$).
If $n\ge1$ and $\un\ze = (\ze_1,\ldots,\ze_n) \in \gS_\Om^n$, we also set
\begin{align*}
S_n(\un\ze) &\defeq \dze_1 + \cdots + \dze_n \in\C, \\[1ex]
\cL_{\un\ze}(\un\xi) &\defeq \un\ze + \un\xi \defeq
\big( \cL_{\ze_1}(\xi_1), \ldots, \cL_{\ze_n}(\xi_n) \big)
\in \gS_\Om^n
\end{align*}
for $\un\xi = (\xi_1,\ldots,\xi_n) \in \C^n$ close enough to~$0$
(it suffices that $|\xi_j| < R_\Om(\ze_j)$;
observe that $S_n(\un\ze+\un\xi) = S_n(\un\ze)+\xi_1+\cdots+\xi_n$).
\end{nota}

%%%%%%%%%%%%%%%%%%%%%%%%%%%%%%%%%%%%%%%%%%%%%%%%%%%%%

\begin{nota}	\label{notesimplexn}
For any $n\ge1$, we denote by~$\De_n$ the $n$-dimensional
simplex\footnote{
This $\De_n$ has nothing to do with \'Ecalle's alien derivatives~$\De_\om$ mentioned earlier.
}
\[
\De_n \defeq \{\, (s_1,\ldots,s_n)\in\R^n \mid s_1,\ldots,s_n\ge0
\;\text{and}\;
s_1 +\cdots + s_n \le 1 \,\}
\]
with the standard orientation, and by $[\De_n]\in\gE_n(\R^n)$ the corresponding integration
current:
\[
[\De_n] \col \text{$\al$ complex-valued smooth $n$-form on $\R^n$} \mapsto \int_{\De_n} \al \in \C.
\]
For every $\ze\in\D_{\rho(\Om)}$, we consider the map
\[
\un\gD(\ze) \col
\un s = (s_1,\ldots,s_n) \mapsto 
\un\gD(\ze,\un s) \defeq {0_\Om}+(s_1\ze,\ldots,s_n\ze) \in \gS_\Om^n,
\]
defined in a neighbourhood of~$\De_n$ in~$\R^n$,
and denote by $\un\gD(\ze)_\# [\De_n] \in \gE_n(\gS_\Om^n)$ the push-forward
of~$[\De_n]$ by~$\un\gD(\ze)$:
\[
\un\gD(\ze)_\# [\De_n] \col \text{$\be$ smooth $n$-form on $\gS_\Om^n$} \mapsto
[\De_n]\big( \un\gD(\ze)^\#\be \big).
\]
\end{nota}

%%%%%%%%%%%%%%%%%%%%%%%%%%%%%%%%%%%%%%%%%%%%%%%%%%%%%

See Appendix~\ref{appNorCur} for our notations in relation with currents.
Notice that the last formula makes sense because $\un\gD(\ze)$ is a smooth map,
thus the pullback form $\un\gD(\ze)^\#\be$ is well-defined in a neighbourhood of~$\De_n$.
The reason for using the language of currents and Geometric Measure Theory is
that later we shall require the push-forward of integration currents by Lipschitz
maps which are not smooth everywhere.
The reader is referred to Appendix~\ref{appNorCur} for a survey of a few facts
of the theory which will be useful for us.

%%%%%%%%%%%%%%%%%%%%%%%%%%%%%%%%%%%%%%%%%%%%%%%%%%%%%

\begin{prop}	\label{propstartingpt}
For $\hat\ph_1,\ldots,\hat\ph_n \in \hat\gR_\Om$ and $\ze \in \D_{\rho(\Om)}$, one has
\beglabel{eqppalconv}
1 * \hat\ph_1 * \cdots * \hat\ph_n(\ze) = 
\un\gD(\ze)_\# [\De_n](\be)
\quad \text{with $\be =
\hat\ph_1(\ze_1) \cdots \hat\ph_n(\ze_n) 
\, \dd\ze_1 \wedge \cdots \wedge \dd\ze_n$},
\elabel
where we denote by $\dd\ze_1 \wedge \cdots \wedge \dd\ze_n$ the pullback
in $\gS_\Om^n$ by 
$\pi_\Om^{\otimes n} \col \un\ze \in \gS_\Om^n \mapsto
\un\xi = \big( \dze_1,\ldots,\dze_n \big)$
of the $n$-form $\dd\xi_1 \wedge \cdots \wedge \dd\xi_n$ of~$\C^n$. 
\end{prop}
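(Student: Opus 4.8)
The statement to be proved, formula~\eqref{eqppalconv}, simply asserts that for $\ze$ in the small disc $\D_{\rho(\Om)}$ the iterated-integral expression~\eqref{eqiterconvol} for $1*\hat\ph_1*\cdots*\hat\ph_n(\ze)$ equals the pairing of the push-forward current $\un\gD(\ze)_\#[\De_n]$ with the holomorphic $n$-form $\be$. The whole point is that near $0_\Om$ everything lives in the principal sheet, so $\gS_\Om$ may be replaced by the disc and the map $\cL_\ze$ by ordinary addition in~$\C$; the identity is then a pure change-of-variables computation. The plan is therefore: (i) reduce to an elementary statement in $\C^n$ via $\pi_\Om$; (ii) unwind the definition of the push-forward current as a pullback-and-integrate over $\De_n$; (iii) perform the affine change of variables from simplex coordinates $\un s$ to the original integration variables $\un\ze$; (iv) recognize the result as the iterated integral~\eqref{eqiterconvol} applied to $1*\hat\ph_1*\cdots*\hat\ph_n$, using that convolution with the constant germ~$1$ is integration from~$0$.

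\textbf{Step (i)--(ii).} First I would observe that for $\ze\in\D_{\rho(\Om)}$ the path $t\mapsto t\dze$ lies in the principal sheet, so the point $\un\gD(\ze,\un s) = 0_\Om + (s_1\ze,\ldots,s_n\ze)\in\gS_\Om^n$ projects under $\pi_\Om^{\otimes n}$ to $(s_1\dze,\ldots,s_n\dze)\in\C^n$, and the segment stays in the principal sheet for $\un s$ in a neighbourhood of $\De_n$ (here one uses $\abs{\dze}<\rho(\Om)$ and the definition of $\cL_{\ze_j}$, together with $S_n(\un s\cdot\ze)$ remaining inside $U_\Om$ for the integrand $\hat\ph_n$ at the last coordinate). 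Consequently $\be$ restricted along the image of $\un\gD(\ze)$ is exactly the pullback under $\pi_\Om^{\otimes n}$ of the smooth form $\hat\ph_1(\xi_1)\cdots\hat\ph_n(\xi_n)\,\dd\xi_1\wedge\cdots\wedge\dd\xi_n$, where the $\hat\ph_j$'s now denote the principal branches on $U_\Om$. By definition of the push-forward of an integration current (Appendix~\ref{appNorCur}), $\un\gD(\ze)_\#[\De_n](\be) = \int_{\De_n} \un\gD(\ze)^\#\be$, and since $\pi_\Om^{\otimes n}\circ\un\gD(\ze)$ is the affine map $\un s\mapsto (s_1\dze,\ldots,s_n\dze)$, the pullback $\un\gD(\ze)^\#\be$ is
\[
\hat\ph_1(s_1\dze)\cdots\hat\ph_n(s_n\dze)\,\dze^{\,n}\,\dd s_1\wedge\cdots\wedge\dd s_n.
\]

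\textbf{Step (iii)--(iv).} It then remains to show
\[
\dze^{\,n}\int_{\De_n}\hat\ph_1(s_1\dze)\cdots\hat\ph_n(s_n\dze)\,\dd s_1\cdots\dd s_n
= 1*\hat\ph_1*\cdots*\hat\ph_n(\ze).
\]
I would prove this by the substitution $\ze_j = s_j\dze$, which maps $\De_n$ onto the simplex $\{\,\ze_1,\ldots,\ze_n\ge 0$ along $[0,\dze]$, $\ze_1+\cdots+\ze_n\in[0,\dze]\,\}$ with Jacobian $\dze^{\,n}$, turning the right-hand side into exactly the iterated integral~\eqref{eqiterconvol} for the $(n{+}1)$-fold convolution with first factor~$1$ (the ``missing'' last variable $\ze-(\ze_1+\cdots+\ze_n)$ being integrated against the constant~$1$, which reproduces the outer integration from~$0$ in the standard inductive description of~\eqref{eqdefconvoldeux}). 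Alternatively, and perhaps more cleanly, I would argue by induction on~$n$: for $n=0$ formula~\eqref{eqppalconv} reads $1*\!\ (\ze)=\int_0^\ze\dd\ze_1$ which is the definition of convolution with~$1$; the inductive step peels off $\hat\ph_n$, using $\un\gD(\ze,\un s)$'s last coordinate and Fubini to split the simplex integral as an integral over $s_n\in[0,1]$ of the integral over the $(n{-}1)$-simplex scaled by $(1-s_n)$, matched against the recursive definition of the $(n{+}1)$-fold convolution.

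\textbf{Main obstacle.} There is no deep difficulty here — this proposition is merely the base case that sets up the real work of the paper (the deformation of the integration simplex in Sections~\ref{secstartpf}--\ref{secestimisot}). The only points requiring care are bookkeeping ones: making sure that for $\ze\in\D_{\rho(\Om)}$ all the relevant segments genuinely stay in the principal sheet so that the identification of branches via $\pi_\Om$ is legitimate on a full neighbourhood of $\De_n$ (not just on $\De_n$ itself, since the current is defined by pulling back a form defined on a neighbourhood), and correctly tracking orientations and the Jacobian factor $\dze^{\,n}$ through the affine change of variables. I would expect the write-up to be half a page at most.
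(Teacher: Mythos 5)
Your proposal is correct and follows essentially the same route as the paper: identify a chart on the principal sheet covering a neighbourhood of $\un\gD(\ze)(\De_n)$, compute the pullback $\un\gD(\ze)^\#\be = \ze^n\,\hat\ph_1(s_1\ze)\cdots\hat\ph_n(s_n\ze)\,\dd s_1\wedge\cdots\wedge\dd s_n$, rewrite the simplex integral as the iterated integral~\eqref{eqdefconvmultint}, and conclude by induction on~$n$ (the paper's base case is $n=1$, where the formula is the definition of $1*\hat\ph_1$, rather than your $n=0$, but this is immaterial). No gaps.
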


%%%%%%%%%%%%%%%%%%%%%%%%%%%%%%%%%%%%%%%%%%%%%%%%%%%%%

\begin{proof}
Since $(\ze_1,\ldots,\ze_n)\in\D_{\rho(\Om)}^n \mapsto {0_\Om}+(\ze_1,\ldots,\ze_n)
\in \gS_\Om^n$ is an analytic chart which covers a neighbourhood of $\un\gD(\ze)(\De_n)$,
we can write $\un\gD(\ze)^\#\be = \hat\ph_1(s_1\ze) \cdots \hat\ph_n(s_n\ze) \dd
s_1  \wedge \cdots \wedge \dd s_n$. Since
\[
\De_n = \{\, (s_1,\ldots,s_n)\in\R^n \mid s_1\in[0,1],
s_2 \in [0,1-s_1], \ldots,
s_n \in [0,1-(s_1+\cdots+s_{n-1})] \,\}
\]
with the standard orientation,
the \rhs\ of the identity stated in~\eqref{eqppalconv} can be rewritten
\begin{gather}
\notag
\ze^n \int_0^1 \dd s_1\int_0^{1-s_1} \dd s_2 \cdots
\int_0^{1-(s_1+\cdots+s_{n-1})} \dd s_n \,
\hat\ph_1(s_1\ze) \cdots \hat\ph_n(s_n\ze) \\[-2ex]
\intertext{or}
\label{eqdefconvmultint}
\int_0^\ze \dd \ze_1\int_0^{\ze-\ze_1} \dd \ze_2 \cdots
\int_0^{\ze-(\ze_1+\cdots+\ze_{n-1})} \dd \ze_n \,
\hat\ph_1(\ze_1) \cdots \hat\ph_n(\ze_n).
\end{gather}
When $n=1$, formula~\eqref{eqppalconv} is thus the very definition of $1*\hat\ph_1(\ze)$. Writing
\[
1 * \hat\ph_1 * \cdots * \hat\ph_n(\ze) =
\int_0^\ze \dd \ze_1 \, \hat\ph_1(\ze_1) \big(1*\hat\ph_2*\cdots*\hat\ph_n\big)(\ze-\ze_1),
\]
we get the general case by induction.
\end{proof}

\newpage

%%%%%%%%%%%%%%%%%%%%%%%%%%%%%%%%%%%%%%%%%%%%%%%%%%%%%
%%%%%%%%%%%%%%%%%%%%%%%%%%%%%%%%%%%%%%%%%%%%%%%%%%%%%

\section{Deformation of the $n$-dimensional integration current in $\gS_\Om^n$}
\label{secdefor}

%%%%%%%%%%%%%%%%%%%%%%%%%%%%%%%%%%%%%%%%%%%%%%%%%%%%%
%%%%%%%%%%%%%%%%%%%%%%%%%%%%%%%%%%%%%%%%%%%%%%%%%%%%%

% \emph{In this section, we fix an interval $J = [a,b]$ and a path $\ga \col J
% \mapsto \C\setminus\Om$ which is $C^1$ on subintervals $J_k$,
% $k=0,\ldots,N-1$, forming a subdivision of~$J$ (\ie $a=t_0 < t_1 < \cdots < t_N
% = b$) and such that $\ga(a) \in \D_{\rho(\Om)}^*$.}

In this section, we fix an interval $J = [a,b]$ and a path $\ga \col J
\mapsto \C\setminus\Om$ such that $\ga(a) \in \D_{\rho(\Om)}^*$;
we denote by~$\ti\ga$ the lift of~$\ga$ which starts in the principal sheet of~$\gS_\Om$.
In order to obtain the analytic continuation of formula~\eqref{eqppalconv}, we
shall deform the $n$-dimensional integration current $\un\gD(\ze)_\# [\De_n]$ as indicated in
Proposition~\ref{propcontgaconvolPsi} below.

\begin{Def}
Given $n\ge1$, for $\ze \in \C$ and $j = 1,\ldots,n$, we set
\[
\cN(\ze) \defeq \{\, \un\ze \in \gS_\Om^n \mid S_n(\un\ze) = \ze \,\},
\qquad
\cN_j \defeq \{\, \un\ze=(\ze_1,\ldots,\ze_n) \in \gS_\Om^n \mid \ze_j = 0_\Om \,\}.
\]
We call \emph{$\ga$-adapted origin-fixing isotopy in $\gS_\Om^n$}
any family $(\Psi_t)_{t\in J}$ of homeomorphisms of $\gS_\Om^n$ such that 
$\Psi_a = \ID$, 
the map
$(t,\un\ze) \in J \times \gS_\Om^n \mapsto
\Psi_t(\un\ze) \in \gS_\Om^n$
is locally Lipschitz,\footnote{%
By that, we mean that each point of $\gS_\Om^n$ admits an open
neighbourhood~$\gU$ on which 
$\pi_\Om^{\otimes n} \col \gS_\Om^n \to \C^n$
induces a biholomorphism and such that the map
$(t,\un\xi) \in J \times \pi_\Om^{\otimes n}(\gU) \mapsto
\pi_\Om^{\otimes n}\circ \Psi_t\circ 
\big( (\pi_\Om^{\otimes n})_{|\gU} \big)\ii(\un\xi) \in\C^n$
is Lipschitz.
}
and for any $t\in J$ and $j=1,\ldots,n$,
\begin{align*}
\un\ze \in \cN\big(\ga(a)\big) \quad & \Rightarrow \quad
\Psi_t(\un\ze) \in \cN\big(\ga(t)\big), \\
\un\ze \in \cN_j \quad & \Rightarrow \quad
\Psi_t(\un\ze) \in \cN_j.
\end{align*}
%
% and its restriction to
%
% $\{\, (t,\un\ze) \in J_k\times \gS_\Om^n \mid
%
% \un\ze \notin \cN\big(\ga(t)\big) \cup \cN_1 \cup \ldots \cup \cN_n \,\}$ 
%
% is $C^1$ for each~$k$.
%
\end{Def}

%%%%%%%%%%%%%%%%%%%%%%%%%%%%%%%%%%%%%%%%%%%%%%%%%%%%%

\begin{prop}	\label{propcontgaconvolPsi}
Suppose that $(\Psi_t)_{t\in J}$ is a $\ga$-adapted origin-fixing isotopy in
$\gS_\Om^n$.
Then, for any $\hat\ph_1,\ldots,\hat\ph_n \in \hat\gR_\Om$,
the analytic continuation of
$1 * \hat\ph_1 * \cdots * \hat\ph_n$ along~$\ga$ is given by
\beglabel{eqcontgaconvolPsi}
%
% \cont_\ga|t}
%
\big(1 * \hat\ph_1 * \cdots * \hat\ph_n\big)\big(\ti\ga(t)\big) 
= \big(\Psi_t \circ \un\gD(\ga(a)) \big)_\#[\De_n] (\be),
\qquad t \in J,
\elabel
with $\be =
\hat\ph_1(\ze_1) \cdots \hat\ph_n(\ze_n) 
\, \dd\ze_1 \wedge \cdots \wedge \dd\ze_n$.
\end{prop}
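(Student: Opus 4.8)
The plan is to show that the \rhs\ of~\eqref{eqcontgaconvolPsi}, viewed as a function of~$t$, is continuous, agrees with the \lhs\ for $t$ near~$a$, and takes values that are consistent with analytic continuation along~$\ga$. First I would check the initial condition: since $\Psi_a = \ID$ and $\ga(a)\in\D_{\rho(\Om)}^*$, the current $\big(\Psi_a\circ\un\gD(\ga(a))\big)_\#[\De_n]$ is exactly $\un\gD(\ga(a))_\#[\De_n]$, so by Proposition~\ref{propstartingpt} the right-hand side equals $1*\hat\ph_1*\cdots*\hat\ph_n(\ga(a))$, which is the value of the principal branch at $\ga(a)$; this pins down the correct branch. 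Moreover, because $\un\gD(\ga(a))(\De_n) \subset \cN\big(\ga(a)\big)$ (indeed $S_n\big(0_\Om+(s_1\ga(a),\ldots,s_n\ga(a))\big) = (s_1+\cdots+s_n)\ga(a)$) and $(\Psi_t)$ preserves the fibres $\cN\big(\ga(\cdot)\big)$, the support of $\big(\Psi_t\circ\un\gD(\ga(a))\big)_\#[\De_n]$ sits inside $\cN\big(\ga(t)\big)$ for every~$t$, which is what makes the pairing with~$\be$ ``live over'' the point $\ga(t)$.

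Next I would establish continuity in~$t$ of the pairing $\big(\Psi_t\circ\un\gD(\ga(a))\big)_\#[\De_n](\be)$. This is where the language of currents pays off: the map $(t,\un s)\mapsto \Psi_t\big(\un\gD(\ga(a),\un s)\big)$ is locally Lipschitz on $J\times(\text{nbhd of }\De_n)$ by the hypotheses on $(\Psi_t)$ and the smoothness of $\un\gD$, hence we have a Lipschitz homotopy of singular $n$-chains; pairing a fixed smooth (here holomorphic) $n$-form $\be$ against a continuously-varying family of normal currents with uniformly compact support is continuous in~$t$ (this is a standard fact recalled in Appendix~\ref{appNorCur}; concretely one can pull $\be$ back under the Lipschitz map and use dominated convergence on $\De_n$). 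At this point we know the right-hand side is a continuous $\C$-valued function of $t\in J$ that agrees near $t=a$ with the germ of the analytic continuation.

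It then remains to identify this continuous function with the analytic continuation of $1*\hat\ph_1*\cdots*\hat\ph_n$ along the whole of~$\ga$. The key mechanism is that $\be$ is \emph{closed} (it is a holomorphic $n$-form on the $n$-dimensional complex manifold $\gS_\Om^n$, so $\dd\be = 0$ for dimension reasons), so Stokes' theorem for currents shows that if two $\ga$-adapted isotopies agree up to a fixed time then the pairings coincide, and more usefully that the pairing is \emph{locally} given by the convolution integral: for $t$ in a small subinterval where $\ga$ stays in a star-shaped subset of $\C\setminus\Om$, one can deform $\Psi_t\circ\un\gD(\ga(a))$, rel boundary, to the ``straight'' simplex $\un\gD'$ based at the current value $\ga(t)$ lifted to the appropriate sheet — the two differ by the boundary of an $(n{+}1)$-chain swept out by the isotopy, on which $\be$ integrates to zero since $\dd\be=0$ and since the faces $\cN_j$, which are preserved by $(\Psi_t)$, contribute nothing (on $\cN_j$ the form $\be$ contains the factor $\dd\ze_j$ which is the pullback of $\dd\xi_j$, and $\ze_j \equiv 0_\Om$ there kills it, exactly as in the proof of Proposition~\ref{propstartingpt}). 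Hence locally in~$t$ the right-hand side of~\eqref{eqcontgaconvolPsi} equals the iterated convolution integral~\eqref{eqdefconvmultint} evaluated on the current sheet, which is precisely the analytic continuation; a connectedness argument on~$J$ (the set of~$t$ where the two continuous functions agree is open, closed, and nonempty) finishes the proof.

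The main obstacle I expect is the bookkeeping in the Stokes/boundary step: one must check carefully that the $(n{+}1)$-dimensional chain swept out by the isotopy has boundary equal to (current simplex at time $t$) $-$ (straight simplex at time $t$) plus the contributions of the lateral faces $\bigcup_j\cN_j$ and the ``top'' slice, and that each unwanted piece is annihilated — either because $\be$ is closed, or because $\be$ vanishes on $\cN_j$, or because the lateral face is contained in $\cN\big(\ga(t)\big)$ which, being codimension~$1$ and complex-analytic with $\be$ of top degree restricting to zero there, contributes nothing. Making this rigorous for merely Lipschitz isotopies (so that the chains are Lipschitz singular chains and Stokes must be invoked in the sense of currents) is the technical heart, and is exactly what Appendix~\ref{appNorCur} is there to support.
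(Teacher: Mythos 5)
Your overall framework---representing the continuation by currents, using that $\be$ is a holomorphic $n$-form of top degree (hence closed, and vanishing in restriction to complex hypersurfaces), and invoking a Stokes/Cauchy--Poincar\'e argument on the $(n{+}1)$-chain swept out by a Lipschitz homotopy whose lateral faces stay in $\cN\big(\ga(t)\big)$ and in the $\cN_j$---is exactly the mechanism of the paper (Lemma~\ref{lemconsCP} and Proposition~\ref{propdeformalgD}). But the step where you identify the right-hand side with the analytic continuation has a genuine gap. You propose to deform $\Psi_t\circ\un\gD(\ga(a))$ rel boundary to ``the straight simplex based at the current value $\ga(t)$ lifted to the appropriate sheet''. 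No such straight simplex exists in general: $\un\gD(\ze)(\un s)=0_\Om+(s_1\ze,\ldots,s_n\ze)$ is only defined for $\ze\in\D_{\rho(\Om)}$, and once $\ti\ga(t)$ has left the principal sheet the iterated integral~\eqref{eqdefconvmultint} simply does not represent the convolution product there---this is precisely why the isotopy is needed in the first place. Relatedly, continuity of the pairing in $t$ together with agreement near $t=a$ does not by itself show that the right-hand side \emph{is} the analytic continuation: continuing analytically along $\ga$ means exhibiting a holomorphic germ at each point $\ga(t)$, and your argument never produces a holomorphic function of a complex variable $\ze$ near $\ga(t)$, only a function of the real parameter $t$.

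The missing construction (Proposition~\ref{propdeformalgD} of the paper) is: for each $t$ and $\ze$ in a disc $D\big(\ga(t),R^*\big)$ of \emph{uniform} radius, set $\un\gD_t(\ze)(\un s)\defeq \un C(t,\un s)+\big(\ze-\ga(t)\big)\un s$ with $\un C_t=\Psi_t\circ\un\gD\big(\ga(a)\big)$, and $G_t(\ze)\defeq\un\gD_t(\ze)_\#[\De_n](\be)$. One checks that $G_t$ is holomorphic in $\ze$ (via Rademacher's theorem and a decomposition of $\De_n$ into pieces small enough to fit in charts), that $G_a=F$ on $\D_{\rho(\Om)}$, that $G_t\big(\ga(t)\big)=(\un C_t)_\#[\De_n](\be)$, and---by your Cauchy--Poincar\'e argument, applied now to the explicit homotopy $\un H(\tau,\un s)=\un C(t,\un s)+(1-\tau)\big(\ze-\ga(t)\big)\un s+\tau\big(\un\de(\un s)+(\ze-\ga(t'))\un s\big)$ interpolating between $\un\gD_t(\ze)$ and $\un\gD_{t'}(\ze)$---that $G_t\equiv G_{t'}$ near $\ga(t')$ for $t'$ close to $t$. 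This chain of holomorphic germs is, by definition, the analytic continuation along $\ga$. So your Stokes machinery is the right tool, but it must be applied to this two-parameter family of ``translated'' deformed simplices rather than to a comparison with a straight simplex that is unavailable off the principal sheet.
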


%%%%%%%%%%%%%%%%%%%%%%%%%%%%%%%%%%%%%%%%%%%%%%%%%%%%%

\begin{figure}
\begin{center}
\includegraphics[height=2.9in]{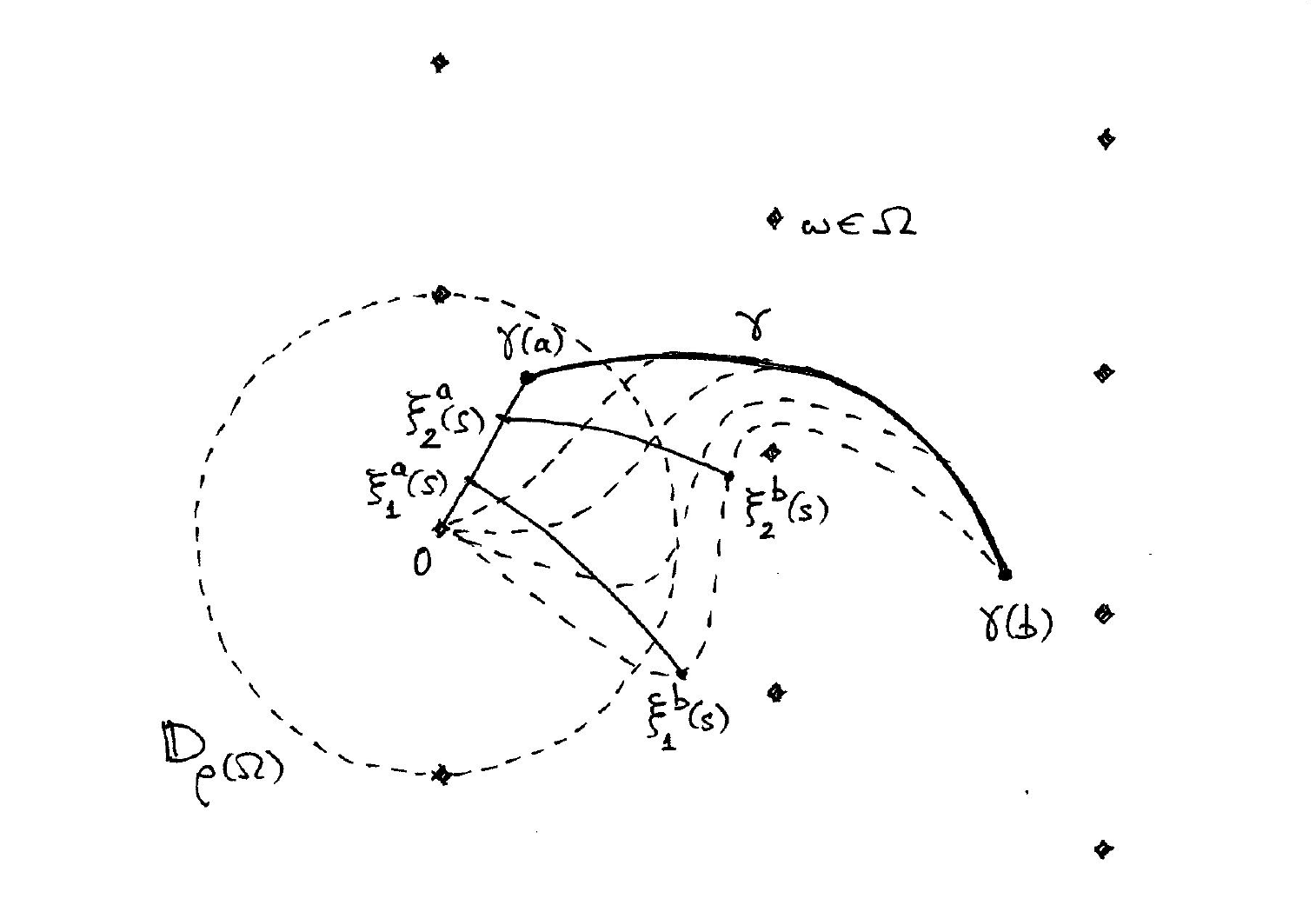}

\caption{Projections of
$\big(\xi^t_1(\un s), \ldots, \xi^t_n(\un s)\big) 
\defeq \Psi_t\big( s_1\ga(a), \ldots, s_n\ga(a) \big) 
= \Psi_t\circ\un\gD\big(\ga(a)\big)(\un s)$.}

\label{figProjDefCell}

\end{center}
\end{figure}

%%%%%%%%%%%%%%%%%%%%%%%%%%%%%%%%%%%%%%%%%%%%%%%%%%%%%

See Figure~\ref{figProjDefCell}.
Observe that, for each $t\in J$, the map $\Psi_t\circ\un\gD\big(\ga(a)\big)
\col \De_n \to \gS_\Om^n$ is Lipschitz, so that the push-forward 
$\big(\Psi_t \circ \un\gD(\ga(a)) \big)_\#[\De_n]$ is a well-defined
$n$-dimensional current of~$\gS_\Om^n$ (see Appendix~\ref{appNorCur}).
The proof of Proposition~\ref{propcontgaconvolPsi} relies on the following more
general statement:
\begin{nota}
Given a map
$\un C=(C_1,\ldots,C_n) \col J\times\De_n \to \gS_\Om^n$,
for each $t\in J$ we denote by $\un C_t \col \De_n \to \gS_\Om^n$ the partial
map defined by
\[
\un s\in\De_n \mapsto \un C_t(\un s) \defeq \un C(t,\un s)
\]
(not to be confused with the components $C_j \col J\times\De_n \to \gS_\Om$, $j=1,\ldots,n$).
\end{nota}

%%%%%%%%%%%%%%%%%%%%%%%%%%%%%%%%%%%%%%%%%%%%%%%%%%%%%

\begin{prop}	\label{propdeformalgD}
Let $\be$ be a holomorphic $n$-form on~$\gS_\Om^n$ and
\[
F \col \ze\in\D_{\rho(\Om)} \mapsto F(\ze) \defeq \un\gD(\ze)_\#[\De_n](\be).
\]
Then $F$ is a holomorphic function in~$\D_{\rho(\Om)}$.

Let $\un C \col J\times\De_n \to\gS_\Om^n$ be a Lipschitz map%
\footnote{%
in the sense that $\pi_\Om^{\otimes n}\circ \un C \col J\times\De_n \to\C^n$ is Lipschitz
}
%
%$C^1$ in restriction to $J_k\times\INT\De_n$ for each~$k$
%
such that the partial map corresponding to $t=a$ satisfies
\[
\un C_a = \un\gD\big(\ga(a)\big)
\]
and that, for every $t\in J$, $\un s = (s_1,\ldots, s_n)\in\De_n$ and $j=1,\ldots,n$,
\begin{align*}
s_1 + \cdots + s_n = 1 \quad & \Rightarrow \quad
\un C(t,\un s) \in \cN\big(\ga(t)\big) \\
% S_n\circ\un C(t,\un s) = \ga(t) \\
%
s_j = 0 \quad \quad \quad & \Rightarrow \quad \quad
\un C(t,\un s) \in \cN_j.
% C_j(t,\un s) = 0_\Om.
%
\end{align*}
Then $F$ admits analytic continuation along~$\ga$ and, for each $t\in J$,
\beglabel{eqcontgaF}
%
% \cont_{\ga|t} 
%
F\big(\ti\ga(t)\big) = (\un C_t)_\#[\De_n](\be).
\elabel
%
% where $\un C_t$ is the partial map viewed as an $n$-dimensional cell.
%
\end{prop}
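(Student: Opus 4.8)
The strategy is to reduce the statement to a routine application of Stokes' theorem for currents combined with the monodromy theorem for analytic continuation. First I would establish the holomorphy of $F$ on $\D_{\rho(\Om)}$: since $\un\gD(\ze)^\#\be$ is, in the chart $(\ze_1,\ldots,\ze_n)\mapsto 0_\Om+(\ze_1,\ldots,\ze_n)$, of the form $g(\ze,\un s)\,\dd s_1\wedge\cdots\wedge\dd s_n$ with $g$ holomorphic in $\ze$ and continuous in $\un s$, differentiation under the integral sign over the fixed compact $\De_n$ gives holomorphy of $\ze\mapsto\int_{\De_n}\un\gD(\ze)^\#\be$.

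Next, the core of the argument. Fix $t_0\in J$ and let $\ze_0\defeq\ti\ga(t_0)$. I want to show that near $\ze_0$ the analytic continuation of $F$ coincides with the function $\ze\mapsto (\un C_{t_0})_\#[\De_n](\be)$ suitably ``recentered''. The key geometric point is that the boundary conditions on $\un C$ mean exactly that the $n$-current $T_t\defeq (\un C_t)_\#[\De_n]$ has a boundary $\partial T_t$ supported in $\bigcup_j\cN_j\,\cup\,\cN(\ga(t))$. Since $\be$ is a \emph{holomorphic} $n$-form on the $n$-complex-dimensional manifold $\gS_\Om^n$, it is closed ($\dd\be=0$ for dimension reasons, being an $(n,0)$-form), and moreover its restriction to each $\cN_j$ vanishes: on $\cN_j$ the coordinate $\ze_j$ is locally constant, so $\dd\ze_j=0$ and hence $\dd\ze_1\wedge\cdots\wedge\dd\ze_n$ pulls back to zero. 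Therefore, if $(\Psi_t)$ or $\un C$ deforms the chain while keeping the relevant faces inside the $\cN_j$'s and the ``top face'' $\{s_1+\cdots+s_n=1\}$ inside the level sets $\cN(\ga(t))$, Stokes' theorem applied to the Lipschitz homotopy $\un C\col[a,t]\times\De_n\to\gS_\Om^n$ gives, for a \emph{closed} holomorphic form, that $(\un C_t)_\#[\De_n](\be)$ is locally constant in $t$ \emph{once we also let the value of $S_n$ vary}; more precisely, one lets $\be$ be replaced by $\be_\ze\defeq$ the pullback of $\be$ by the translation that shifts $S_n$, and checks the two integrals agree on the overlap.

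Concretely I would proceed as follows. Cover $J$ by finitely many subintervals $[t_i,t_{i+1}]$ small enough that $\ga$ restricted to each stays in a disc on which a determination of the analytic continuation of $F$ is defined by a convergent-integral formula of the shape $\ze\mapsto\un\gD'(\ze)_\#[\De_n](\be)$, where $\un\gD'$ is a translate of $\un\gD$ adapted to the basepoint $\ti\ga(t_i)$ (using the local biholomorphism $\pi_\Om$ and star-shapedness on small discs, exactly as in the classical computation leading to \eqref{eqiterconvol}). On each such subinterval, apply the Lipschitz Stokes formula to the homotopy $\un s\mapsto \un C(t,\un s)$ joining $\un C_{t_i}$ to $\un C_t$: the boundary contributions from the faces $\{s_j=0\}$ vanish because $\be|_{\cN_j}=0$, the contribution from $\{s_1+\cdots+s_n=1\}$ is absorbed by the variation of $\ga(t)$ through the identity $S_n(\un C(t,\cdot))=\ga(t)$ on that face (this is where one must be slightly careful and possibly replace $\be$ by a family $\be_t$ differing by an exact form, or equivalently integrate a primitive along the top face), and $\dd\be=0$ kills the interior term. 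This yields $(\un C_t)_\#[\De_n](\be)=F\big(\ti\ga(t)\big)$ on $[t_i,t_{i+1}]$ provided it holds at $t_i$; starting from $t=a$, where $\un C_a=\un\gD(\ga(a))$ gives the identity by definition of $F$, one propagates along all of $J$.

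**Main obstacle.** The delicate point is handling the ``top face'' $\{s_1+\cdots+s_n=1\}$: unlike the side faces, $\be$ does \emph{not} restrict to zero there, and as $t$ varies this face sweeps across different level sets $\cN(\ga(t))$. One must argue that the flux of $\be$ through this moving face accounts precisely for the $\ze$-derivative of $F$ evaluated along $\ga$ — i.e. one is really proving $\frac{\dd}{\dd t}(\un C_t)_\#[\De_n](\be)=\ga'(t)\,F'(\ti\ga(t))$ by an exact Stokes computation — rather than claiming outright $t$-invariance. Making this bookkeeping rigorous with only Lipschitz regularity (so that $\partial(\un C)_\#=(\un C)_\#\partial$ and the homotopy formula for currents are available) is the technical heart; the appendix on currents is presumably invoked precisely here, to justify $\dd$ and $\partial$ commuting with Lipschitz pushforward and the validity of Stokes on the Lipschitz chain $[a,b]\times\De_n$. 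Once that machinery is in place, Proposition~\ref{propcontgaconvolPsi} follows by taking $\un C(t,\un s)\defeq\Psi_t\big(\un\gD(\ga(a))(\un s)\big)$ and checking the hypotheses of Proposition~\ref{propdeformalgD} against the definition of a $\ga$-adapted origin-fixing isotopy, using \eqref{eqppalconv} to identify the $t=a$ value.
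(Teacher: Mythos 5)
You have the right architecture (Lipschitz Stokes, $\dd\be=0$, vanishing of $\be$ on the complex hypersurfaces $\cN_j$), and you have correctly located the crux in the top face $\{s_1+\cdots+s_n=1\}$ — but you leave that crux unresolved, and the two escape routes you sketch (computing the flux through the moving face and matching it with $\ga'(t)\,F'(\ti\ga(t))$, or correcting $\be$ by an exact form) are not what makes the argument work, and the first one is genuinely problematic: with $\un C$ only Lipschitz in $t$, the derivative $\frac{\dd}{\dd t}(\un C_t)_\#[\De_n](\be)$ is not available pointwise, and the bookkeeping you describe would force you to differentiate the pushforward of a current by a map that is merely a.e.\ differentiable. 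The missing idea is to arrange things so that there is \emph{no} flux through the top face at all. The paper does this by introducing, for each $t\in J$ and each $\ze$ in a small disc $D(\ga(t),R^*)$, the auxiliary deformation
\[
\un\gD_t(\ze)\col \un s\in\De_n\mapsto \un C(t,\un s)+\big(\ze-\ga(t)\big)\un s,
\qquad G_t(\ze)\defeq \un\gD_t(\ze)_\#[\De_n](\be).
\]
The correction term $(\ze-\ga(t))\un s$ is the whole point: on the top face it shifts $S_n$ from $\ga(t)$ to $\ze$, so that when one compares $G_t(\ze)$ and $G_{t'}(\ze)$ for nearby $t,t'$ \emph{at the same fixed $\ze$}, the interpolating Lipschitz homotopy $\un H(\tau,\un s)$ keeps the top face inside the single fixed complex hypersurface $\cN(\ze)$ (and the side faces inside the $\cN_j$). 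Since a holomorphic $n$-form on an $n$-dimensional complex manifold restricts to zero on every complex hypersurface, \emph{all} boundary terms vanish and the Cauchy--Poincar\'e/Stokes argument gives $G_t\equiv G_{t'}$ near $\ga(t)$ with no flux computation. One then checks separately that each $G_t$ is holomorphic on $D(\ga(t),R^*)$ (covering $\De_n$ by small simplices, using Rademacher to differentiate under the integral), that $G_a=F$, and that $G_t(\ga(t))=(\un C_t)_\#[\De_n](\be)$; the compatible family of germs $(G_t)$ is by definition the analytic continuation of $F$ along $\ga$.

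So the gap is concrete: without the two-parameter family $G_t(\ze)$ your homotopy in $t$ moves the top face across the varying level sets $\cN(\ga(t))$, on which $\be$ does not vanish, and you are left with exactly the boundary integral you admit you cannot easily control. Your remark about ``recentering'' and replacing $\be$ by a translate $\be_\ze$ points in the right direction, but the correct implementation is to translate the \emph{chain} (linearly in $\un s$, so the origin-fixing faces are preserved) rather than the form, and to do the Stokes comparison at fixed $\ze$ rather than along the path.
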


%%%%%%%%%%%%%%%%%%%%%%%%%%%%%%%%%%%%%%%%%%%%%%%%%%%%%

The proof of Proposition~\ref{propdeformalgD} requires the following consequence
of the Cauchy-Poincar\'e Theorem \cite{Shabat}:

%%%%%%%%%%%%%%%%%%%%%%%%%%%%%%%%%%%%%%%%%%%%%%%%%%%%%

\begin{lem}	\label{lemconsCP}
Let $M$ be a complex analytic manifold of dimension~$n$ and let
$N_0,N_1,\ldots,N_n$ be complex analytic hypersurfaces of~$M$.
Let ${\un H} \col [0,1]\times \De_n \to M$ be a Lipschitz map such that,
% $C^1$ on $[0,1]\times\INT\De_n$
%
for every $\tau\in[0,1]$, $\un s = (s_1,\ldots, s_n)\in\De_n$ and $j=1,\ldots,n$,
\begin{align*}
s_1 + \cdots + s_n = 1 \quad & \Rightarrow \quad
{\un H}(\tau,s) \in N_0 \\
s_j = 0 \quad \quad \quad & \Rightarrow \quad
{\un H}(\tau,s) \in N_j.
\end{align*}
Then the partial maps ${\un H}_0$ and ${\un H}_1$ corresponding to $\tau = 0$
and $\tau=1$ satisfy
\beglabel{eqhomotopformul} 
({\un H}_0)_\#[\De_n](\be) = ({\un H}_1)_\#[\De_n](\be)
\elabel
for any holomorphic $n$-form~$\be$ on~$M$.
\end{lem}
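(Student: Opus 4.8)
The plan is to deduce Lemma~\ref{lemconsCP} from the Cauchy--Poincar\'e theorem (Stokes' theorem for holomorphic forms), using the fact that a holomorphic $n$-form is closed in the relevant sense. First I would set up the homotopy as a single Lipschitz chain: consider the map $\un H \col [0,1]\times\De_n \to M$ and observe that $[0,1]\times\De_n$ is (up to subdivision into simplices, or working with Lipschitz singular chains directly) an $(n+1)$-chain whose boundary consists of the ``top'' face $\{1\}\times\De_n$, the ``bottom'' face $\{0\}\times\De_n$, and the ``lateral'' faces $[0,1]\times(\partial\De_n)$. Since $\be$ is a holomorphic $n$-form on the $n$-dimensional manifold~$M$, any holomorphic $(n+1)$-form vanishes identically, so in particular $d\be = 0$ where $d$ is the exterior derivative (the Cauchy--Poincar\'e theorem tells us that the integral of a holomorphic $n$-form over the boundary of an $(n+1)$-chain is zero, because the form is, in a suitable sense, exact/closed; concretely $\int_{\partial W}\be = \int_W d\be$ and $d\be$ is an $(n+1)$-form whose $(n+1,0)$-part is holomorphic hence zero, while we only integrate the holomorphic/relevant part). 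Applying this to $W = \un H([0,1]\times\De_n)$ — more precisely to the push-forward current $\un H_\#[[0,1]\times\De_n]$ — gives that the integral of $\be$ over $\partial(\un H_\#[[0,1]\times\De_n]) = \un H_\#(\partial[[0,1]\times\De_n])$ is zero.

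Next I would identify the boundary pieces. We have $\partial([0,1]\times\De_n) = (\{1\}\times\De_n) - (\{0\}\times\De_n) - [0,1]\times(\partial\De_n)$ with appropriate signs/orientations. Pushing forward by $\un H$, the top and bottom faces give exactly $(\un H_1)_\#[\De_n]$ and $(\un H_0)_\#[\De_n]$. The key point is that the lateral contribution $\un H_\#\big([0,1]\times\partial\De_n\big)$ integrates~$\be$ to zero. This is where the hypotheses on $\un H$ enter: the faces of $\partial\De_n$ are $\{s_j = 0\}$ for $j=1,\ldots,n$ and $\{s_1+\cdots+s_n = 1\}$. By hypothesis, $\un H$ maps $[0,1]\times\{s_j=0\}$ into the hypersurface $N_j$ and $[0,1]\times\{s_1+\cdots+s_n=1\}$ into $N_0$. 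On an $n$-dimensional complex manifold, a holomorphic $n$-form restricted (pulled back) to any complex hypersurface — which is an $(n-1)$-dimensional complex manifold, hence at most $(n-1)$-real-dimensional in its holomorphic directions... — actually the cleaner statement: the pullback of $\be$ (a holomorphic $n$-form) to any $N_k$ is a holomorphic $n$-form on an $(n-1)$-dimensional complex manifold, hence identically zero. Therefore the pullback of $\be$ by $\un H$ restricted to each lateral face factors through a map into $N_k$ and is zero; so the integral over $[0,1]\times(\partial\De_n)$ of $\un H^\#\be$ vanishes.

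Combining, $0 = \int_{\partial(\un H_\#[[0,1]\times\De_n])}\be = (\un H_1)_\#[\De_n](\be) - (\un H_0)_\#[\De_n](\be) - 0$, which is exactly~\eqref{eqhomotopformul}. To make this rigorous in the Lipschitz category, I would invoke the version of Stokes'/Cauchy--Poincar\'e theorem for Lipschitz chains and normal currents recalled in the appendix: push-forwards of the integration current $[[0,1]\times\De_n]$ by Lipschitz maps are well-defined normal currents, the boundary operator commutes with Lipschitz push-forward ($\partial(\un H_\# T) = \un H_\#(\partial T)$), and Stokes' theorem $T(d\be) = (\partial T)(\be)$ holds for normal currents and smooth forms. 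The triangulation/orientation bookkeeping of $\partial([0,1]\times\De_n)$ is routine.

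The main obstacle I anticipate is the regularity bookkeeping rather than the geometric idea: one must be careful that $\un H$ being only Lipschitz (not $C^1$) still allows the push-forward currents and Stokes' theorem to be applied, and that the lateral faces genuinely contribute zero. The latter needs the observation that a Lipschitz map into a complex hypersurface $N_k \subset M$ pulls back the holomorphic $n$-form $\be$ to zero; this should follow because $\be|_{N_k}$, being a holomorphic (hence in particular smooth) $n$-form on the $(n-1)$-complex-dimensional manifold $N_k$, vanishes identically, and pullback by a Lipschitz map of the zero form is zero almost everywhere, which suffices for the current-theoretic integral. Verifying that this ``almost everywhere zero'' statement is enough — i.e.\ that the relevant lateral-face current has its action on~$\be$ equal to $0$ — is precisely the sort of point the appendix on currents is designed to handle, so I would cite it rather than reprove it.
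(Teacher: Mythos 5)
Your argument is correct and is essentially the paper's own proof: both take $[0,1]\times\De_n$ as an $(n+1)$-current, apply the Cauchy--Poincar\'e theorem via $\dd\be=0$ together with the commutation of boundary and Lipschitz push-forward from the appendix, and kill the lateral boundary contributions by observing that a holomorphic $n$-form restricts to zero on the complex hypersurfaces $N_j$ into which the hypotheses force those faces. No substantive difference.
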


%%%%%%%%%%%%%%%%%%%%%%%%%%%%%%%%%%%%%%%%%%%%%%%%%%%%%

\begin{proof}[Proof of Lemma~\ref{lemconsCP}]
Let $\be$ be a holomorphic $n$-form on~$M$.
Let us consider $P \defeq [0,1]\times \De_n$ and the corresponding
$(n+1)$-dimensional integration current
$[P] \in \gE_{n+1}(\R^{n+1})$.
Its boundary can be written 
\[
\pa[P] = Q_1 - Q_0 + B_0 + \cdots + B_n,
\]
where each summand is an $n$-dimensional current with compact support:
\[
\spt Q_i = \{i\}\times\De_n, \qquad
\spt B_j = [0,1]\times F_j
\]
with $F_j \defeq$ the face of~$\pa\De_n$ defined by $s_j=0$ if $j\ge1$ or
$s_1+\cdots+s_n=1$ if $j=0$.
This is a simple adaptation of formula~\eqref{eqpaDenaffine} of
Appendix~\ref{appNorCur}; in fact, 
$Q_i = [A_i(\De_n)]$ with an affine map $A_i \col \un x \in \R^n \mapsto (i,\un x) \in
\R^{n+1}$ 
and $B_j = \pm [A_j^*(\De_n)]$ with some other injective affine maps
$A_j^* \col \R^n \to \R^{n+1}$ mapping~$\De_n$ to $[0,1]\times F_j$.
In this situation, according to Lemma~\ref{lempushcomm} and
formula~\eqref{eqdefphifiesiszeroDe}, we have
\[
\pa \un H_\# [P] = \un H_\# \pa [P],
\qquad
\un H_\# Q_i = (\un H\circ A_i)_\# [\De_n],
\qquad
\un H_\# B_j = (\un H\circ A_j^*)_\# [\De_n].
\]
On the one hand, the Cauchy-Poincar\'e Theorem tells us that $\pa\un
H_\#[P](\be)=0$ (because $\dd\be=0$), and $H\circ A_i = H_i$, 
thus 
\[
(\un H_0)_\# [\De_n](\be) - (\un H_1)_\# [\De_n](\be) =
\un H_\# B_0(\be) + \cdots + \un H_\# B_n(\be).
\]
On the other hand $\spt \un H_\# B_j \subset N_j$ and
the restriction of~$\be$ to any complex hypersurface vanishes identically
(because it is a holomorphic form of maximal degree), 
thus $\un H_\# B_j(\be) = 0$,
and \eqref{eqhomotopformul} is proved.
%
% We may consider~${\un H}$ as an $(n+1)$-dimensional cell, the boundary of which is of
% the form
% %
% \[
% %
% \pa {\un H} = {\un H}_1 - {\un H}_0 + {\un B}_0 + {\un B}_1 + \cdots + {\un B}_n,
% %
% \]
% %
% with $n$-dimensional cells ${\un B}_0,{\un B}_1,\ldots,{\un B}_n$ obtained by restriction of~${\un H}$
% to the faces $\{ s_1 + \cdots + s_n = 1 \}$ or $\{ s_j = 0 \}$ of~$\De_n$;
% %
% in particular, each cell~${\un B}_j$ has its image contained in~$N_j$.
%
%
% \[
%
% 0 = \int_{{\un H}_1} \be - \int_{{\un H}_0} \be + \sum_{j=0}^n \int_{{\un B}_j} \be.
%
% \]
%
% But the integral of a holomorphic $n$-form on a cell which is contained in a
% complex analytic hypersurface vanishes too, hence the conclusion follows.
%
\end{proof}

%%%%%%%%%%%%%%%%%%%%%%%%%%%%%%%%%%%%%%%%%%%%%%%%%%%%%

\begin{proof}[Proof of Proposition~\ref{propdeformalgD}]
Observe that the function~$R_\Om$ defined by~\eqref{eqdefRze} is continuous, thus we
can define a positive number
\[
R^* \defeq \min\big\{ R_\Om\big(C_j(t,\un s)\big) \mid 
t\in J, \; \un s\in \De_n, \; j=1,\ldots,n \big\}
\]
and, for each $t\in J$ and $\ze \in D\big(\ga(t),R^*\big)$, 
% an $n$-dimensional cell
%
a Lipschitz map and a complex number
\[
\un\gD_t(\ze) \col
\un s \in \De_n \mapsto
{\un C(t,\un s)} + \big(\ze-\ga(t)\big) \un s
\in \gS_\Om^n,
\qquad
G_t(\ze) \defeq \un\gD_t(\ze)_\#[\De_n](\be).
\]
For $\ze\in D\big(\ga(a),R^*\big)$, we have $\un\gD_a(\ze) = \un\gD(\ze)$, hence
$G_a(\ze) = F(\ze)$.
For $t\in J$, we have $\un\gD_t\big(\ga(t)\big) = \un C_t$, hence
\[
G_t\big(\ga(t)\big) = (\un C_t)_\#[\De_n](\be).
\]
Therefore it suffices to show that, for each $t\in J$, % for each $k$ and for each $t\in J_k$,
\begin{enumerate}[{i)}]
\item
the function $G_t$ is holomorphic in $D\big(\ga(t),R^*\big)$
(and $G_a=F$ is even holomorphic in $\D_{\rho(\Om)}$);
\item
for any $t'\in J$ close enough to~$t$, the functions~$G_t$ and~$G_{t'}$ coincide
in a neighbourhood of~$\ga(t)$.
\end{enumerate}

%%%%%%%%%%%%%%%%%%%%%%%%%%%%%%%%%%%%%%%%%%%%%%%%%%%%%

\medskip

\noindent \textbf{i)}
The case of $G_a=F$ is easier because, for $\ze\in\D_{\rho(\Om)} \cup
D\big(\ga(a),R^*\big)$, the range of $\un\gD_a(\ze) =
\un\gD(\ze)$ entirely lies in a domain $\gU = \gU_1\times\cdots\gU_n$, where
each $\gU_j$ is an open subset of~$\gS_\Om$ in restriction to which~$\pi_\Om$ is
injective, so that 
\beglabel{eqdefchigU}
\chi = \pi_\Om^{\otimes n} \col 
(\ze_1,\ldots,\ze_n) \in\gU \mapsto (\xi_1,\ldots,\xi_n) = 
\big(\dze_1,\ldots,\dze_n\big)
\elabel
is an analytic chart of~$\gS_\Om^n$;
we can write $\chi^\#\be = f(\xi_1,\ldots,\xi_n) \, \dd\xi_1 \wedge \cdots
\wedge \dd\xi_n$ with a holomorphic function~$f$ and
$\chi\circ \un\gD_a(\ze)(\un s) = (s_1\ze,\ldots,s_n\ze)$, therefore
\[
G_a(\ze) = F(\ze) = \ze^n \int_{\De_n} f(s_1\ze,\ldots,s_n\ze)
\, \dd s_1 \cdots \dd s_n
\]
is holomorphic.

Given $t\in J$, by compactness,
we can cover~$\De_n$ by simplices $Q\sst m$, $1\le m \le M$, so that any
intersection $Q\sst m\cap Q\sst{m'}$ is contained in an affine hyperplane
of~$\R^n$
and each~$Q\sst m$ is small enough for 
$\bigcup_{\ze \in D(\ga(t),R^*)}\un\gD_t(\ze)\big(Q\sst m\big)$ 
to be contained in the domain~$\gU\sst m$ of an analytic chart~$\chi\sst
m$ similar to~\eqref{eqdefchigU} (\ie $\gU\sst m$ is a product of factors on
which~$\pi_\Om$ is injective and~$\chi\sst m$ is defined by the same formula
as~$\chi$ but on~$\gU\sst m$).
For each~$m$, 
we can write $\big(\chi\sst m\big)^\#\be = f\sst m(\xi_1,\ldots,\xi_n) \, \dd\xi_1 \wedge \cdots
\wedge \dd\xi_n$ with a holomorphic function~$f\sst m$ and
$\chi\sst m\circ \un\gD_t(\ze) = 
\big(\xi_1\sst m(\ze,\,\cdot\,),\ldots,\xi_n\sst m(\ze,\,\cdot\,)\big)$
with, for each $j=1,\ldots,n$,
\[
(\ze,\un s) \in D\big(\ga(t),R^*\big) \times Q\sst m
\mapsto
\xi_j\sst m(\ze,\un s) = \pi_\Om\circ C_j(t,\un s) + s_j \big(\ze-\ga(t)\big).
\]
These functions~$\xi_j\sst m$ are holomorphic in~$\ze$;
applying Rademacher's theorem to $\un s \mapsto \pi_\Om\circ C_j(t,\un s)$
(recall that $t$ is fixed), we see that, for almost every $\un s$, the partial
derivatives of~$\xi_j\sst m$ exist and are holomorphic in~$\ze$, therefore
\[
G_t(\ze) = \sum_{m=1}^M \int_{Q\sst m} 
f\sst m\big(\xi_1\sst m(\ze,\un s),\ldots,\xi_n\sst m(\ze,\un s)\big)
\Det\bigg[ \frac{\pa\xi_i\sst m}{\pa s_j\,}(\ze,\un s)\bigg]_{1\le i,j\le n}
\, \dd s_1\cdots \dd s_n
\]
is holomorphic for $\ze \in D\big(\ga(t),R^*\big)$.

\medskip

\noindent \textbf{ii)}
We now fix $t\in J$. By compactness, for $t'\in J$ close enough to~$t$, we
can write
\[
\un C(t',\un s) = {\un C(t,\un s)} + \un\de(\un s)
\]
for all $\un s \in \De_n$, with
\[
\de_j(\un s) \defeq \pi_\Om \big( C_j(t',\un s) - C_j(t,\un s) \big) 
\in \D_{\frac{R^*}{2n}},
\qquad j = 1,\ldots, n.
\]
Then $\ga(t') \in D\big(\ga(t),R^*/2\big)$ (because $s_1+\cdots+s_n=1$ implies
$S_n\circ\de(\un s) = \ga(t')-\ga(t)$) and, 
for $\ze \in D\big(\ga(t'),R^*/2\big)$, we have
\[
G_t(\ze) \defeq \un\gD_t(\ze)_\#[\De_n](\be),
\qquad
G_{t'}(\ze) \defeq \un\gD_{t'}(\ze)_\#[\De_n](\be)
\]
with
\[
\un\gD_t(\ze)(\un s) =
{\un C(t,\un s)} + \big(\ze-\ga(t)\big) \un s,
\qquad
\un\gD_{t'}(\ze)(\un s) =
{\un C(t,\un s)} + \un\de(s) + \big(\ze-\ga({t'})\big) \un s.
\]
Let us define a Lipschitz map ${\un H} \col [0,1]\times \De_n \to \gS_\Om^n$ by
\[
\un H(\tau,\un s) \defeq
{\un C(t,\un s)} + (1-\tau) \big(\ze-\ga(t)\big) \un s +
\tau\big(\un\de(s) + \big(\ze-\ga({t'})\big) \un s\big).
\]
An easy computation yields
\begin{align*}
s_1 + \cdots + s_n = 1 \quad & \Rightarrow \quad
S_n\circ\un H(\tau,\un s) = \ze \\
s_j = 0 \quad \quad \quad & \Rightarrow \quad \quad
H_j(\tau,\un s) = 0_\Om.
\end{align*}
We can thus apply Lemma~\ref{lemconsCP} with 
$N_0 = \cN(\ze)$
and $N_j = \cN_j$,
and conclude that $G_t \equiv G_{t'}$ on $D\big(\ga(t'),R^*/2\big)$.
\end{proof}

%%%%%%%%%%%%%%%%%%%%%%%%%%%%%%%%%%%%%%%%%%%%%%%%%%%%%

\begin{proof}[Proof of Proposition~\ref{propdeformalgD}]
In view of Proposition~\ref{propstartingpt}, we can apply
Proposition~\ref{propdeformalgD} with 
$\be = \hat\ph_1(\ze_1) \cdots \hat\ph_n(\ze_n) 
\, \dd\ze_1 \wedge \cdots \wedge \dd\ze_n$
and $\un C_t = \Psi_t \circ \un\gD\big(\ga(a)\big)$.
\end{proof}

%%%%%%%%%%%%%%%%%%%%%%%%%%%%%%%%%%%%%%%%%%%%%%%%%%%%%
%%%%%%%%%%%%%%%%%%%%%%%%%%%%%%%%%%%%%%%%%%%%%%%%%%%%%

\section{Construction of an adapted origin-fixing isotopy in $\gS_\Om^n$}

\label{secConstrIsot}

%%%%%%%%%%%%%%%%%%%%%%%%%%%%%%%%%%%%%%%%%%%%%%%%%%%%%
%%%%%%%%%%%%%%%%%%%%%%%%%%%%%%%%%%%%%%%%%%%%%%%%%%%%%

To prove Theorem~\ref{thmboundconvga}, formula~\eqref{eqrelderiv} tells us that it
is sufficient to deal with the analytic continuation of products of the form
$1*\hat\ph_1*\cdots*\hat\ph_n$ instead of $\hat\ph_1*\cdots*\hat\ph_n$ itself,
and Proposition~\ref{propcontgaconvolPsi} tells us that, to do so, we only need to
construct explicit $\ga$-adapted origin-fixing isotopies $(\Psi_t)$ and to provide
estimates.

This section aims at constructing $(\Psi_t)$ for any given $C^1$
path~$\ga$ (estimates are postponed to Section~\ref{secestimisot}).
Our method is inspired by an appendix of \cite{CNP} and
is a generalization of Section~6.2 of~\cite{stabiconv}.

%%%%%%%%%%%%%%%%%%%%%%%%%%%%%%%%%%%%%%%%%%%%%%%%%%%%%

\begin{prop}	\label{propisotopyflow}
Let $\ga \col J = [a,b] \to \C\setminus\Om$ be a $C^1$ path such that
$\ga(a) \in \D_{\rho(\Om)}^*$,
and let $\eta \col \C \to [0,+\infty)$ be a locally Lipschitz function such that
\[
\{\, \xi\in \C \mid \eta(\xi) = 0 \,\} = \Om.
\]
Then the function
\beglabel{eqdefD}
(t,\un\ze) \in J \times \gS_\Om^n \mapsto D(t,\un\ze) \defeq
\eta(\dze_1) + \cdots + \eta(\dze_n) + 
\eta\big( \ga(t) - S_n(\un\ze) \big)
\elabel
is everywhere positive and the formula
\beglabel{eqdefX}
X(t,\un\ze) =  \left| \begin{aligned}
X_1 &\defeq \frac{\eta(\dze_1)}{D(t,\un\ze)} \ga'(t) \\
& \qquad \vdots \\[1ex]
X_n &\defeq \frac{\eta(\dze_n)}{D(t,\un\ze)} \ga'(t)
\end{aligned} \right.
\elabel
defines a non-autonomous vector field 
$X(t,\un\ze) \in T_{\un\ze} \big( \gS_\Om^n \big) \simeq \C^n$
(using the canonical identification between the tangent space of~$\gS_\Om$ at
any point and~$\C$ provided by the tangent map of the local biholomorphism~$\pi_\Om$)
which admits a flow map~$\Psi_t$ between time~$a$ and time~$t$
for every $t\in J$
and induces a $\ga$-adapted origin-fixing isotopy $(\Psi_t)_{t\in J}$ in $\gS_\Om^n$.
\end{prop}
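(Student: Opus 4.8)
The plan is to verify, in order, the four requirements in the definition of a $\ga$-adapted origin-fixing isotopy: positivity of $D$ (so that $X$ is well-defined), existence of the flow $\Psi_t$, the local Lipschitz regularity of $(t,\un\ze)\mapsto\Psi_t(\un\ze)$, and finally the two invariance properties ($\cN(\ga(a))\to\cN(\ga(t))$ and $\cN_j\to\cN_j$). I expect the existence and Lipschitz regularity of the flow to be the main technical obstacle, since $X$ is only a non-autonomous Lipschitz (not $C^1$) vector field and it lives on a Riemann surface spread over~$\C$, so one cannot simply quote the classical Picard--Lindelöf theorem in $\C^n$ without care about completeness of trajectories and about patching local charts.

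\textbf{Positivity of $D$.} First I would observe that $D(t,\un\ze)$ is a sum of values of the nonnegative function~$\eta$, hence $D(t,\un\ze)=0$ would force $\eta(\dze_j)=0$ for every~$j$ and $\eta(\ga(t)-S_n(\un\ze))=0$. By the hypothesis on~$\eta$, this means $\dze_1,\ldots,\dze_n\in\Om$ and $\ga(t)-S_n(\un\ze)\in\Om$; since $S_n(\un\ze)=\dze_1+\cdots+\dze_n$ and $\Om$ is stable under addition, this would give $\ga(t)\in\Om$, contradicting $\ga(J)\subset\C\setminus\Om$. Hence $D>0$ everywhere on $J\times\gS_\Om^n$, and the right-hand sides of~\eqref{eqdefX} are well-defined; moreover $X$ is locally Lipschitz in $\un\ze$ (uniformly in~$t$ on the compact~$J$) because $\eta$ is locally Lipschitz, $\pi_\Om^{\otimes n}$ is a local biholomorphism, $\ga'$ is continuous on the compact~$J$, and $D$ is locally bounded away from~$0$.

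\textbf{Existence and regularity of the flow.} Here I would first solve the ODE $\tfrac{\dd}{\dd t}\un\ze(t)=X(t,\un\ze(t))$ locally in charts of $\gS_\Om^n$ of the form $\gU=\gU_1\times\cdots\times\gU_n$ on which $\pi_\Om^{\otimes n}$ is a biholomorphism, using the Cauchy--Lipschitz theorem for Lipschitz non-autonomous fields; local solutions patch to a maximal solution on $\gS_\Om^n$ by uniqueness. The key point is that maximal solutions are actually defined on all of~$J$: I claim a trajectory cannot escape to the ``boundary'' of~$\gS_\Om$ in finite time, because the $j$-th component satisfies $|\dze_j{}'|\le|\ga'(t)|$ (as $\eta(\dze_j)/D\le 1$), so each $\dze_j(t)$ moves with speed at most $\|\ga'\|_\infty$ and stays in a fixed compact region of~$\C$; more precisely, a curve of bounded length in~$\gS_\Om$ starting from a point that projects into $\D_{\rho(\Om)}$ cannot leave every compact subset of $\gS_\Om$ in finite time (this uses that $\gS_\Om$ is a covering-type surface over $\C\setminus\Om$ with the lift of the disc $\D_{\rho(\Om)}$ as a relatively compact neighbourhood of~$0_\Om$, together with the fact that the projected curve, having bounded length, stays at positive distance from~$\Om$ unless it converges, in which case the lift converges too). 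Thus $\Psi_t$ is a well-defined homeomorphism of $\gS_\Om^n$ for each $t\in J$, with $\Psi_a=\ID$; joint local Lipschitz continuity of $(t,\un\ze)\mapsto\Psi_t(\un\ze)$ follows from the Lipschitz dependence of solutions of Lipschitz ODEs on time and initial conditions, read in the charts described in the footnote of the definition.

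\textbf{The two invariance properties.} For the first, suppose $\un\ze\in\cN(\ga(a))$, i.e. $S_n(\un\ze)=\ga(a)$; I would compute, along the trajectory through $\un\ze$,
\[
\frac{\dd}{\dd t}\,S_n\big(\Psi_t(\un\ze)\big)
=\sum_{j=1}^n \frac{\eta(\dze_j)}{D(t,\Psi_t(\un\ze))}\,\ga'(t),
\]
which is \emph{not} obviously $\ga'(t)$; so instead I would look at $t\mapsto S_n(\Psi_t(\un\ze))-\ga(t)$ and show it solves a linear scalar ODE with zero initial value. Writing $u(t):=S_n(\Psi_t(\un\ze))-\ga(t)$, from~\eqref{eqdefD}--\eqref{eqdefX} one gets $u'(t)=\big(\sum_j\eta(\dze_j)/D-1\big)\ga'(t)=-\big(\eta(\ga(t)-S_n(\Psi_t(\un\ze)))/D\big)\ga'(t)=-\big(\eta(-u(t))/D\big)\ga'(t)$; this is a Lipschitz ODE for~$u$ with $u(a)=0$, and $u\equiv0$ is a solution, hence the only one, so $S_n(\Psi_t(\un\ze))=\ga(t)$ for all~$t$, i.e. $\Psi_t(\un\ze)\in\cN(\ga(t))$. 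For the second, suppose $\un\ze\in\cN_j$, i.e. the $j$-th component is $0_\Om$; since $\pi_\Om(0_\Om)=0\in\Om$ we have $\eta(0)=0$, so the $j$-th component $X_j$ of the vector field vanishes identically on $\cN_j$, hence the $j$-th component of the trajectory is stationary and equal to $0_\Om$, giving $\Psi_t(\un\ze)\in\cN_j$ for all~$t$. This completes the verification that $(\Psi_t)_{t\in J}$ is a $\ga$-adapted origin-fixing isotopy.
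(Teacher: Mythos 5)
Your treatment of the positivity of $D$, of the invariance of~$\cN_j$, and of the first invariance property is sound; in fact your argument for $\Psi_t\big(\cN(\ga(a))\big)\subset\cN(\ga(t))$ (writing $u(t)=S_n(\Psi_t(\un\ze))-\ga(t)$, observing that $u'=-\big(\eta(-u)/d(t)\big)\ga'$ with $d(t)\defeq D\big(t,\Psi_t(\un\ze)\big)$ continuous and bounded below, and invoking uniqueness against the solution $u\equiv0$) is a clean alternative to the paper's Gronwall-type argument with $h=\eta\circ\xi_0$. The problem lies in your global-existence step, and it is a genuine gap. You solve the ODE directly on~$\gS_\Om^n$ and claim that a maximal solution cannot escape before time~$b$ because each projected component has speed $\le\abs{\ga'(t)}$ and "the projected curve, having bounded length, stays at positive distance from~$\Om$ unless it converges, in which case the lift converges too." A curve of bounded speed on a finite interval \emph{always} converges as $t\to t^*$, so your dichotomy is vacuous; and if the limit is a point $\om\in\Om\setminus\{0\}$, the lift does \emph{not} converge in~$\gS_\Om$ (there is no point of~$\gS_\Om$ over~$\om$) and the solution genuinely ceases to exist. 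Nothing in your bounded-length argument excludes this: a bounded-speed trajectory can perfectly well reach~$\Om$ in finite time. Excluding it is precisely the point where the hypothesis $\{\eta=0\}=\Om$ must be used in full (you only use $\eta(0)=0$, for the $\cN_j$-invariance).

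The correct mechanism is that the $j$-th component of the field vanishes identically on the fibre over \emph{each} $\om\in\Om$, so by (backward) uniqueness of the Cauchy problem a component that starts in $\C\setminus\Om$ can never reach~$\Om$. The paper packages this by working with the projected field $\ti X_j(t,\un\xi)=\frac{\eta(\xi_j)}{\ti D(t,\un\xi)}\ga'(t)$ on all of $J\times\C^n$: there the field is bounded and everywhere defined, so its flow $\ti\Phi^{t^*,t}$ exists globally with no escape issue; since $\ti X_j\equiv0$ on $\{\xi_j=\om\}$ for every $\om\in\Om$, each such hyperplane \emph{and its complement} are flow-invariant, hence each component of a projected trajectory has range either $\{0\}$ or contained in $\C\setminus\Om$, and the trajectory can then be \emph{lifted} to~$\gS_\Om^n$. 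If you insist on solving upstairs as you do, you must replace your length argument by a quantitative version of the same idea, e.g.\ a Gronwall estimate showing $\eta\big(\dxi^t_j\big)\ge \eta\big(\dxi^a_j\big)\,\ee^{-K(t-a)/\de}>0$ (which is exactly what the paper proves later, in part (c) of the proof of Proposition~\ref{propesimisot}); as written, your completeness claim is unsupported.
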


%%%%%%%%%%%%%%%%%%%%%%%%%%%%%%%%%%%%%%%%%%%%%%%%%%%%%

An example of function which satisfies the assumptions of
Proposition~\ref{propisotopyflow} is
\[
\eta(\xi) \defeq \dist(\xi,\Om), \qquad \xi\in\C.
\]

%%%%%%%%%%%%%%%%%%%%%%%%%%%%%%%%%%%%%%%%%%%%%%%%%%%%%

\begin{proof}[Proof of Proposition~\ref{propisotopyflow}]
%
% Let us denote by $J_k =[t_k,t_{k+1}]$, $k=0,\ldots,N-1$, subintervals forming a subdivision
% of~$J$ so that $\ga$ is $C^1$ on each~$J_k$.
%
% \medskip
%
%\noindent 
\textbf{(a)}
Observe that $D\big(t,(\ze_1,\ldots,\ze_n)\big) = 
\ti D\big(t,(\dze_1,\ldots,\dze_n)\big)$
with
\beglabel{eqdeftiDtunze}
(t,\un\xi) \in J \times \C^n \mapsto \ti D(t,\un\xi) \defeq
\eta(\xi_1) + \cdots + \eta(\xi_n) + 
\eta\big( \ga(t) - \ti S_n(\un\xi) \big)
\elabel
and $\ti S_n(\un\xi) \defeq \xi_1+\cdots+\xi_n$ for any $\un\xi\in\C^n$.
The function $\ti D$ is everywhere positive: suppose indeed $\ti D(t,\un\xi) = 0$
with $t\in J$ and $\un\xi \in \C^n$, we would have 
\[
\xi_1, \ldots, \xi_n, \ga(t) - \ti S(\un\xi) \in \Om,
\]
whence $\ga(t) \in \Om$ by the stability under addition of~$\Om$, but this is
contrary to the hypothesis on~$\ga$.

Therefore $D>0$, the vector field~$X$ is well-defined and in fact
\[
X\big(t,(\ze_1,\ldots,\ze_n)\big) = 
\ti X\big(t,(\dze_1,\ldots,\dze_n)\big)
\]
with a non-autonomous vector field~$\ti X$ defined in $J \times \C^n$, the
components of which are
\beglabel{eqdefXjtunze}
\ti X_j(t,\un\xi) \defeq \frac{\eta(\xi_j)}{\ti D(t,\un\xi)} \ga'(t),
\qquad j= 1,\ldots,n.
\elabel
These functions are locally Lipschitz on $J\times\C^n$,
thus we can apply the Cauchy-Lipschitz theorem on the existence and uniqueness of
solutions to differential equations to $\dd\un\xi / \dd t = \ti X(t,\un\xi)$:
for every $t^*\in J_k$ and $\un\xi\in\C^n$, there is a unique maximal solution
$t\mapsto \ti\Phi^{t^*,t}(\un\xi)$ such that 
$\ti\Phi^{t^*,t^*}(\un\xi) = \un\xi$.
The fact that the vector field~$\ti X$ is bounded implies that
$\ti\Phi^{t^*,t}(\un\xi)$ is defined for all $t\in J$ and the classical theory
guarantees that $(t^*,t,\un\xi) \mapsto \ti\Phi^{t^*,t}(\un\xi)$ is locally Lipschitz on
$J\times J\times\C^n$. % and $C^1$ on $J_k\times J_k\times(\C\setminus\Om)^n$.

\medskip

\noindent \textbf{(b)}
For each $\om\in\Om$ and $j=1,\ldots,n$, we set
\[
\ti\cN_j(\om) \defeq \{\, \un\xi=(\xi_1,\ldots,\xi_n) \in \C^n \mid \xi_j = \om \,\}.
\]
We have $\ti X_j \equiv 0$ on $J\times\ti\cN_j(\om)$, thus $\ti\Phi^{t^*,t}$ leaves
$\ti\cN_j(\om)$ invariant for every $(t^*,t)\in J\times J$.
In particular, since $0\in\Om$,
\beglabel{eqPhicNj}
\un\xi \in \ti\cN_j(0) \quad\Rightarrow\quad 
\ti\Phi^{t^*,t}(\un\xi) \in \ti\cN_j(0).
\elabel
The non-autonomous flow property 
$\ti\Phi^{t,t^*}\circ\ti\Phi^{t^*,t} = \ti\Phi^{t^*,t}\circ\ti\Phi^{t,t^*} = \ID$ 
implies that, for each $(t^*,t)\in J\times J$, $\ti\Phi^{t^*,t}$ is a
homeomorphism the inverse of which is $\ti\Phi^{t,t^*}$, which leaves
$\ti\cN_j(\om)$ invariant,
hence also 
\beglabel{eqPhicNjom}
\un\xi \in \C^n \setminus \ti\cN_j(\om) \quad\Rightarrow\quad 
\ti\Phi^{t^*,t}(\un\xi) \in \C^n \setminus \ti\cN_j(\om).
\elabel
Properties~\eqref{eqPhicNj} and~\eqref{eqPhicNjom} show that the flow map
between times~$t^*$ and~$t$ for~$X$ is well-defined in $\gS_\Om^n$:
for $\un\ze\in \gS_\Om^n$, the solution
$t \mapsto \Phi^{t^*,t}(\un\ze)$ can be obtained as
the lift starting at~$\un\ze$ of the path
$t \mapsto \ti\Phi^{t^*,t}\big( \dze_1, \ldots, \dze_n \big)$
(indeed, each component of this path has its range either reduced to~$\{0\}$ or
contained in $\C\setminus\Om$).

We thus define, for each $t\in J$, a homeomorphism of $\gS_\Om^n$ by
%
% \[
%
$\Psi_t \defeq \Phi^{a,t}$ %\Phi^{t_k,t} \circ \Phi^{t_{k-1},t_k} \circ \cdots \circ \Phi^{t_0,t_1},
%
% \]
%
and observe that % where $k$ is determined by the condition $t\in J_k$, we get
%
% \[
%
$\Psi_t(\cN_j) \subset \cN_j$,
%
% \]
%
$\Psi_a = \ID$ and
$(t,\un\ze)\mapsto\Psi_t(\un\ze)$ is locally Lipschitz on $J\times\gS_\Om^n$.
%
% its restriction to $J_k\times\big(\gS_\Om\setminus\{0_\Om\}\big)^n$ is $C^1$
% for each~$k$.

%%%%%%%%%%%%%%%%%%%%%%%%%%%%%%%%%%%%%%%%%%%%%%%%%%%%%

\medskip

\noindent \textbf{(c)}
It only remains to be proved that 
\beglabel{eqinclusPsit}
\Psi_t\big( \cN(\ga(a)) \big) \subset \Psi_t\big( \cN(\ga(t)) \big)
\elabel
for every $t\in J$. 

Given $\un\ze \in \gS_\Om^n$, the function defined by
\[
\xi_0 \col t \in J \mapsto \ga(t) - S_n\circ\Psi_t(\un\ze).
\]
is $C^1$ on~$J$ and an easy computation yields its derivative in
the form
$\xi_0'(t) = h(t)\ga'(t)/d(t)$, with Lipschitz functions
\[
h(t) \defeq \eta\big(\xi_0(t)\big), \qquad
d(t) \defeq D\big(t,\Psi_t(\un\ze)\big).
\]
Since $\eta$ is Lipschitz on the range of~$\xi_0$, say with Lipschitz
constant~$K$, the function $h=\eta\circ\xi_0$ is Lipschitz on~$J$,
hence its derivative~$h'$ exists almost everywhere on~$J$; 
%
% and $h(t) = \int_{t_k}^t h'$;
%
writing $\abs{h(t')-h(t)} \le K \abs{\xi_0(t')-\xi_0(t)}$, we see that
$\dst \abs{h'(t)} \le K \abs{\xi_0'(t)} \le K {h(t)} \max_{J} \abs*{\tfrac{\ga'}{d}}$
a.e., hence
\[
\text{$g(t) \defeq \frac{h'(t)}{h(t)}$ exists a.e.\ and defines $g \in L^\infty(J)$.}
\]
By the fundamental theorem of Lebesgue integral calculus, $t \mapsto
\int_{a}^t g(\tau)\,\dd\tau$ is differentiable a.e.\ and
\[
h(t) = h(a) \, \exp\Big( \int_{a}^t g(\tau)\,\dd\tau \Big),
\qquad t\in J.
\]

% It follows that, for each $t\in J$, $h(t)$ is the product of~$h(a)$ and a positive factor.
%
Now, if $\un\ze \in \cN\big(\ga(a)\big)$, 
then $\xi_0(a) =0$, thus $h(a)=0$, 
thus $h\equiv 0$ on~$J$, 
thus $\xi_0(t)$ stays in~$\Om$ for all $t\in J$,
thus $\xi_0\equiv 0$ on~$J$,
\ie $\Psi_t(\un\ze) \in \cN\big(\ga(t)\big)$.
\end{proof}

%%%%%%%%%%%%%%%%%%%%%%%%%%%%%%%%%%%%%%%%%%%%%%%%%%%%%
%%%%%%%%%%%%%%%%%%%%%%%%%%%%%%%%%%%%%%%%%%%%%%%%%%%%%

\section{Estimates}	\label{secestimisot}

%%%%%%%%%%%%%%%%%%%%%%%%%%%%%%%%%%%%%%%%%%%%%%%%%%%%%
%%%%%%%%%%%%%%%%%%%%%%%%%%%%%%%%%%%%%%%%%%%%%%%%%%%%%

We are now ready to prove
\begin{thmS}%	\label{thmboundconvga}' 
Let $\de,L>0$ with $\de < \rho(\Om)/2$ and 
\beglabel{eqdefdepC}
\de' \defeq \demi \rho(\Om) \, \ee^{-2L/\de}.
\elabel
Then, for any $n\ge1$ and $\hat\ph_1,\ldots,\hat\ph_n \in \hat\gR_\Om$,
\beglabel{ineqwithone}
\max_{\cK_{\de,L}(\Om)}\abs{ 1*\hat\ph_1*\cdots*\hat\ph_n }
\le \frac{1}{n!} {\big(\rho(\Om)\,\ee^{3L/\de}\big)^n}
\max_{\cK_{\de',L}(\Om)}\abs{\hat\ph_1} \cdots \max_{\cK_{\de',L}(\Om)}\abs{\hat\ph_n}.
\elabel
\end{thmS}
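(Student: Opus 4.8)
I will prove the displayed inequality~\eqref{ineqwithone} by bounding $\abs{1*\hat\ph_1*\cdots*\hat\ph_n(\ze)}$ for a fixed $\ze\in\cK_{\de,L}(\Om)$. Pick a path $\ga_0\col[0,1]\to\gS_\Om$ from $0_\Om$ to $\ze$ of length $\le L$ with $R_\Om(\ga_0(t))\ge\de$ throughout (available since $\ze\in\cK_{\de,L}(\Om)$). If $\abs{\dze}<\de$ then $\ze$ is forced into the principal sheet (otherwise $R_\Om(\ze)=\dist(\dze,\Om)\le\abs{\dze}<\de$), is represented by the segment $[0,\dze]$, which lifts into $\cK_{\de',L}(\Om)$, and~\eqref{ineqppalsheet} applied to the $n+1$ germs $1,\hat\ph_1,\ldots,\hat\ph_n$ gives the claim; so assume $\abs{\dze}\ge\de$. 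Let $a$ be the first time $\ga_0$ reaches distance $\de/2$ from $0$ — at which $\ga_0(a)$ still lies in the principal sheet, no loop around $\Om\setminus\{0\}$ being possible while staying in $\D_{\rho(\Om)}$ — put $\ga\defeq\pi_\Om\circ\ga_0|_{[a,1]}\col[a,1]\to\C\setminus\Om$, apply Proposition~\ref{propisotopyflow} with $\eta=\dist(\cdot,\Om)$ to obtain a $\ga$-adapted origin-fixing isotopy $(\Psi_t)$ (the flow of~\eqref{eqdefX}), and combine Proposition~\ref{propstartingpt} (valid at $\ga(a)$, since $\abs{\ga(a)}=\de/2<\rho(\Om)$) with Proposition~\ref{propcontgaconvolPsi} to write
\[
1*\hat\ph_1*\cdots*\hat\ph_n(\ze)=\int_{\De_n}\prod_{j=1}^n\hat\ph_j\big(\Psi_1(\un\gD(\ga(a),\un s))_j\big)\;\det\Big[\tfrac{\pa\xi_i^1}{\pa s_j}\Big]_{i,j=1}^n\dd\un s,
\]
where $\xi_i^1(\un s)$ is $\pi_\Om$ of the $i$-th component of $\Psi_1(\un\gD(\ga(a),\un s))$. (One should first replace $\ga$ by a nearby $C^1$ path; the small losses this causes in $\de$ and $L$ are recovered at the end by continuity of $1*\hat\ph_1*\cdots*\hat\ph_n$ and the identity $\cK_{\de',L}(\Om)=\bigcap_{\eps>0}\cK_{\de',L+\eps}(\Om)$.)

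The heart of the matter is a uniform lower bound $D(t,\Psi_t(\un\ze))\ge\de/2$ along every trajectory $t\in[a,1]$, together with its companion: each component's trajectory stays in $\cK_{\de',L}(\Om)$. At $t=a$ one computes $D=\abs{\ga(a)}=\de/2$ (all arguments of $D$ lie in $\D_{\de/2}$, where $\eta=\abs{\,\cdot\,}$), and $D$ is $1$-Lipschitz in $t$ along the flow; the decisive observation is that whenever $D<\rho(\Om)/2$, replacing each of the $n+1$ arguments of $D$ in~\eqref{eqdefD} by its nearest point of $\Om$ and using the stability of $\Om$ under addition yields $\dist(\ga(t),\Om)\le D(t,\Psi_t(\un\ze))$, so comparison with $R_\Om(\ga_0(t))\ge\de$ excludes $D<\de/2$ except when $\ga(t)$ — hence, by the same inequality, every component — sits in $\D_{\rho(\Om)/2}$ inside the principal sheet, where the flow reduces to the explicit rescaling $\Psi_t$ sending $(s_1\ga(a),\ldots,s_n\ga(a))$ to $(s_1\ga(t),\ldots,s_n\ga(t))$ and the corresponding part of $\De_n$ contributes only a principal-sheet term already controlled by~\eqref{ineqppalsheet}. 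This case analysis, reconciling a potential pinching of $D$ with $R_\Om\ge\de$ while keeping track of which sheet each component occupies, is where I expect the real work to be: a priori $D$ is only $1$-Lipschitz along trajectories, so nothing cheap stops it from dropping by $L$, and ruling that out requires the arithmetic of $\Om$ and careful sheet bookkeeping.

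Granting $D\ge\de/2$, the rest is routine. For the germs in the integrand: the $j$-th component trajectory has length $\le\int_a^1\abs{\ga'}\le L$ (as $\abs{X_j}=\tfrac{\eta(\,\cdot\,)}{D}\abs{\ga'}\le\abs{\ga'}$), and along it $R_\Om\ge\de'$, since $\big|\tfrac{\dd}{\dd t}\log\eta\big|\le\tfrac{\abs{\ga'}}{D}\le\tfrac{2\abs{\ga'}}{\de}$ ($\eta$ being $1$-Lipschitz) gives $\eta\ge\tfrac\de2\ee^{-2L/\de}\ge\de'$ unless the component lies in $\D_{\rho(\Om)/2}$, where (principal sheet) $R_\Om\ge\rho(\Om)/2>\de'$; prepending the short segment to $0_\Om$ then places $\Psi_1(\un\gD(\ga(a),\un s))_j$ in $\cK_{\de',L}(\Om)$, so $\abs{\hat\ph_j(\cdot)}\le\max_{\cK_{\de',L}(\Om)}\abs{\hat\ph_j}$. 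For the Jacobian, the linearization of $X$ along a trajectory is a rank-one perturbation of a diagonal field (all components move parallel to $\ga'(t)$), so $\det[\pa\xi_i^1/\pa s_j]$ equals $\ga(a)^n$ — of modulus $\le(\de/2)^n<\rho(\Om)^n$ — times $\exp$ of the integral of $\operatorname{div}X$ along the trajectory (the rank-one term being harmless in the trace), and $\abs{\operatorname{div}X}\le\tfrac{(n+2)\abs{\ga'}}{D}\le\tfrac{2(n+2)\abs{\ga'}}{\de}$ forces, for $n\ge1$, $\abs{\det[\pa\xi_i^1/\pa s_j]}\le(\rho(\Om)\ee^{3L/\de})^n$ a.e.; keeping this last bound under control despite the merely Lipschitz regularity of the flow is the second delicate point. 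With $\int_{\De_n}\dd\un s=1/n!$ this yields~\eqref{ineqwithone}, whence Theorem~\ref{thmboundconvga} follows via~\eqref{eqrelderiv} and the Cauchy inequalities (applied with $\de,L$ replaced by $\de/2,\,L+\tfrac\de2$).
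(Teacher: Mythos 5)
Your architecture is the paper's: deform the simplex by the flow of the vector field~\eqref{eqdefX} from Proposition~\ref{propisotopyflow}, represent the continuation by~\eqref{eqcontgaconvolPsi}, and bound separately the values $\hat\ph_j$ along the deformed simplex and the Jacobian determinant. But both key estimates are carried out by arguments that fail. The fatal one is the determinant: you evaluate $\Det\big[\pa\dxi^t_i/\pa s_j\big]$ as $\ga(a)^n$ times $\exp\int\operatorname{div}X$ via the Liouville formula for the Jacobian of a flow. This is exactly the error of \cite{CNP} that Remark~\ref{remmistCNP} is devoted to: $X$ is built from $\eta=\dist(\cdot,\Om)$, hence is not holomorphic (and not even $C^1$, so ``$\operatorname{div}X$'' and ``the linearization along a trajectory'' are not defined); the quantity obeying a divergence identity would be the \emph{real} $2n\times 2n$ Jacobian of the flow on $\C^n\simeq\R^{2n}$, whereas the determinant in~\eqref{eqvalueconvolze} is the complex $n\times n$ determinant of the a.e.\ derivative of $\un s\in\R^n\mapsto\un\xi^t(\un s)\in\C^n$, a complex combination of $n\times n$ minors of a real $n\times 2n$ matrix, for which no such identity holds. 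The paper's route is different: Lemma~\ref{lemVarX} plus Gronwall give the $\ell^1$-Lipschitz bound~\eqref{ineqxispxi}, so each column of the Jacobian has $\ell^1$-norm $\le\ee^{3L/\de}\abs{\ga(a)}$, and $\abs{\Det\gJ}\le\prod_j\big(\sum_i\abs{\gJ_{i,j}}\big)$ concludes. (Even taken at face value, your own chain $(\de/2)^n\,\ee^{2(n+2)L/\de}\le\big(\rho(\Om)\ee^{3L/\de}\big)^n$ requires $n\ge4$.)

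Second, your constants do not come out. Stopping the initial segment at $\abs{\ga(a)}=\de/2$ gives at best $D\ge\de/2$, and even that needs $\dist(\ga(t),\Om)\ge\de/2$ for $t\ge a$, which $R_\Om(\ga_0(t))\ge\de$ does \emph{not} provide when the path returns near $0$ on the principal sheet; your ``explicit rescaling'' escape clause is unjustified there, since $\Psi_t$ depends on the whole history of $\ga$ on $[a,t]$, not on the position of $\ga(t)$ alone. With $D\ge\de/2$ the logarithmic derivative of $\eta\circ\dxi^t_i$ is only bounded by $2/\de$, the initial value is $s_i\de/2$ (not $\de/2$), and your claimed inequality $\tfrac{\de}{2}\ee^{-2L/\de}\ge\de'=\tfrac12\rho(\Om)\ee^{-2L/\de}$ is false because $\de<\rho(\Om)/2$. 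The paper instead arranges $\ga(a)\in\D_{\rho(\Om)/2}\setminus\D_\de$ with $\dist(\ga(t),\Om)\ge\de$ on all of $[a,b]$, hence $D\ge\de$, and runs the dichotomy on $\abs{\dxi^a_i}=s_i\abs{\ga(a)}$ against the threshold $\tfrac12\rho(\Om)\ee^{-L/\de}$: below it the component never leaves the principal-sheet lift of $\D_{\rho(\Om)/2}$, so $R_\Om\ge\rho(\Om)/2$; above it Gronwall gives $\eta\ge\tfrac12\rho(\Om)\ee^{-2L/\de}=\de'$ exactly. Your setup would at best yield $\cK_{\de'',L}$ with $\de''=\tfrac12\rho(\Om)\ee^{-4L/\de}<\de'$, a strictly weaker inequality than~\eqref{ineqwithone}.
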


%%%%%%%%%%%%%%%%%%%%%%%%%%%%%%%%%%%%%%%%%%%%%%%%%%%%%

%Notice that $\de'<\de$ as soon as $L\ge\rho(\Om)/4$.
%
The proof of Theorem~\ref{thmboundconvga}' will follow from
%
%%%%%%%%%%%%%%%%%%%%%%%%%%%%%%%%%%%%%%%%%%%%%%%%%%%%%
%%%%%%%%%%%%%%%%%%%%%%%%%%%%%%%%%%%%%%%%%%%%%%%%%%%%%
%
\begin{prop}	\label{propesimisot} 
Let $\de,L>0$. 
Let $\ga \col J = [a,b] \to \C\setminus\Om$ be a $C^1$ path such that $\ga(a)\in
\D^*_{\rho(\Om)/2}$, $\abs{\ga(a)}+b-a \le L$ and
\[
\abs{\ga'(t)} = 1 \ens\text{and}\ens
\dist\big( \ga(t),\Om \big) \ge \de, \qquad
t \in J.
\]
Consider the $\ga$-adapted origin-fixing isotopy $(\Psi_t)_{t\in J}$ defined as in
Proposition~\ref{propisotopyflow} by the flow of the vector field~\eqref{eqdefX}
with the choice $\eta(\xi) = \dist(\xi,\Om)$.
Then, for each $t\in J$, 
\begin{itemize}
\item
the Lipschitz map $\Psi_t\circ \un\gD\big(\ga(a)\big) =
(\xi_1^t,\ldots,\xi_1^t)$ maps $\De_n$ in $\big( \cK_{\de',L}(\Om) \big)^n$,
with $\de'$ as in~\eqref{eqdefdepC},
\item
the almost everywhere defined partial derivatives
$\frac{\pa \dxi^t_i}{\pa s_j} \col \De_n \to \C$ % $T_{\xi^t_i(\un s)} \gS_\Om \simeq\C$
%
% (canonical identification induced by the tangent map of the local biholomorphism~$\pi_\Om$)
%
satisfy
\beglabel{ineqDet}
\abs{\Det\bigg[ \frac{\pa\dxi^t_i}{\pa s_j}(\un s)\bigg]_{1\le i,j\le n}} \le
\big(\rho(\Om)\,\ee^{3L/\de}\big)^n
\quad \text{for a.e.\ $\un s\in\De_n$.}
\elabel
%
%with~$C$ as in~\eqref{eqdefdepC}.
%
\end{itemize}
\end{prop}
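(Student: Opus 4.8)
The plan is to pass to a symmetric ``$(n{+}1)$-particle'' description. Fix $\un s=(s_1,\dots,s_n)\in\De_n$, set $y_j(\sigma)\defeq\pi_\Om\big(\xi_j^\sigma(\un s)\big)$ for $j=1,\dots,n$ (so that $y_j(t)=\dxi^t_j(\un s)$), and add the extra ``particle'' $y_0(\sigma)\defeq\ga(\sigma)-\big(y_1(\sigma)+\cdots+y_n(\sigma)\big)$, which is the function $\xi_0$ appearing in part~(c) of the proof of Proposition~\ref{propisotopyflow}. With $s_0\defeq1-(s_1+\cdots+s_n)\ge0$ one has $y_k(a)=s_k\ga(a)$ for all $k$, $\sum_{k=0}^ns_k=1$, $\sum_{k=0}^ny_k(\sigma)=\ga(\sigma)$, and — since $d(\sigma)\defeq\sum_{l=0}^n\eta\big(y_l(\sigma)\big)$ is exactly the quantity~\eqref{eqdefD} evaluated along the trajectory — all $n{+}1$ particles obey the \emph{same} law $\dot y_k=\big(\eta(y_k)/d\big)\ga'$. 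The one fact used repeatedly is $d(\sigma)\ge\de$: picking $\om_k\in\Om$ nearest to $y_k(\sigma)$, the point $\om_0+\cdots+\om_n$ lies in~$\Om$ by stability under addition and $\abs{\ga(\sigma)-(\om_0+\cdots+\om_n)}\le\sum_k\eta(y_k(\sigma))=d(\sigma)$, so $\de\le\dist(\ga(\sigma),\Om)\le d(\sigma)$. Every scalar built from the $y_k$'s below is Lipschitz in~$\sigma$ (because $\eta=\dist(\cdot,\Om)$ is $1$-Lipschitz and $\abs{\dot y_k}=\eta(y_k)/d\le1$), and the passage from a.e.\ differential inequalities to the stated conclusions is by the fundamental theorem of Lebesgue calculus, exactly as in the proof of Proposition~\ref{propisotopyflow}.

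\emph{Membership in $\cK_{\de',L}(\Om)$.} Fix $\un s$ and $j$ (the case $s_j=0$ is trivial). I join $0_\Om$ to $\xi_j^t(\un s)$ by the segment $\sigma\in[0,1]\mapsto\sigma s_j\ga(a)$ followed by the trajectory $\sigma\in[a,t]\mapsto\xi_j^\sigma(\un s)$; this is indeed the path defining $\xi_j^t(\un s)$, and its length equals $s_j\abs{\ga(a)}+\int_a^t\big(\eta(y_j)/d\big)\,\dd\sigma\le\abs{\ga(a)}+(b-a)\le L$. Along the segment we are in the principal sheet (it lies in $\D_{\rho(\Om)/2}\subset U_\Om$), so $R_\Om\ge\rho(\Om)-\rho(\Om)/2\ge\de'$ (by~\eqref{eqdefdepC}). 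For the trajectory, let $\sigma_1$ be the first time, if any, at which $y_j(\sigma)\in\bigcup_{\om\in\Om\setminus\{0\}}\om[1,+\infty)$. For $\sigma<\sigma_1$ the projected path stays in $U_\Om$, hence the trajectory stays in the principal sheet and $R_\Om(\xi_j^\sigma)=\dist(y_j(\sigma),\Om\setminus\{0\})=:\tilde h_j(\sigma)$; from $\tilde h_j(a)\ge\rho(\Om)-\abs{\ga(a)}\ge\rho(\Om)/2$ and $\abs{\tilde h_j'}\le\abs{\dot y_j}=\eta(y_j)/d\le\tilde h_j/\de$ one gets $\tilde h_j(\sigma)\ge\tfrac{\rho(\Om)}{2}\ee^{-L/\de}\ge\de'$. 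For $\sigma\ge\sigma_1$ we know only $R_\Om(\xi_j^\sigma)\ge\dist(y_j(\sigma),\Om)=\min\big(\tilde h_j(\sigma),\abs{y_j(\sigma)}\big)$; but $\abs{y_j(\sigma_1)}\ge\rho(\Om)$ (a point of a cut has modulus $\ge\rho(\Om)$) and $\tfrac{\dd}{\dd\sigma}\ln\abs{y_j}\ge-\abs{\dot y_j}/\abs{y_j}\ge-1/\de$ (using $\eta(y_j)\le\abs{y_j}$, as $0\in\Om$), so $\abs{y_j(\sigma)}\ge\rho(\Om)\ee^{-L/\de}$, and together with the bound on $\tilde h_j$ (which holds on all of $[a,t]$) again $R_\Om(\xi_j^\sigma)\ge\tfrac{\rho(\Om)}{2}\ee^{-L/\de}\ge\de'$.

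\emph{The Jacobian bound~\eqref{ineqDet}.} The key point is that $\un s\in\De_n\mapsto\big(y_1(t),\dots,y_n(t)\big)\in\C^n$ is Lipschitz for the $\ell^1$-norms with constant $2\abs{\ga(a)}\,\ee^{2L/\de}<\rho(\Om)\,\ee^{2L/\de}$. Given $\un s,\un s'$, write $\Delta y_k$ for the difference of the corresponding solutions, $\Sigma(\sigma)\defeq\sum_{k=0}^n\abs{\Delta y_k(\sigma)}$, and $d,d'\ge\de$ for the two denominators. From $\dot y_k=(\eta(y_k)/d)\ga'$ one gets $\abs{\tfrac{\dd}{\dd\sigma}\Delta y_k}\le\abs{\Delta y_k}/d'+\big(\eta(y_k)/d\big)\abs{\Delta d}/d'$ with $\abs{\Delta d}\le\Sigma$; summing over $k$ and using $\sum_k\eta(y_k)/d=1$ to collapse the coupling term to $\abs{\Delta d}/d'\le\Sigma/d'$, one obtains $\Sigma'\le(2/\de)\Sigma$ a.e., hence $\Sigma(t)\le\Sigma(a)\,\ee^{2L/\de}$, while $\Sigma(a)=\abs{\ga(a)}\sum_{k=0}^n\abs{s_k-s_k'}\le2\abs{\ga(a)}\,\norm{\un s-\un s'}_1$ (because $\abs{s_0-s_0'}\le\sum_{k\ge1}\abs{s_k-s_k'}$). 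By Rademacher's theorem the map is differentiable at a.e.~$\un s$, and at such a point every column of $\big[\pa\dxi^t_i/\pa s_j\big]$ has $\ell^1$-norm at most this Lipschitz constant; Hadamard's inequality then bounds the determinant in~\eqref{ineqDet} by $\big(\rho(\Om)\,\ee^{2L/\de}\big)^n\le\big(\rho(\Om)\,\ee^{3L/\de}\big)^n$.

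The main obstacle is this last estimate. A naive treatment of the variational equation — bounding entries, or an operator norm — produces an exponential rate of order $n/\de$, which is useless; what rescues the exponent is that the coupling weights $\eta(y_k)/d$ form a probability vector ($\sum_k\eta(y_k)=d$), so that after summing over the $n{+}1$ particles the coupled contribution costs only one extra factor $\Sigma/\de$ rather than $n\,\Sigma/\de$. A lesser subtlety, in the first part, is the bookkeeping of sheets: off the principal sheet $R_\Om$ is the distance to all of~$\Om$, so the trajectory might in principle drift back close to~$0$; this is precisely why one introduces $\sigma_1$ and uses that leaving the principal sheet forces the modulus up to $\rho(\Om)$ first, after which it can decay no faster than $\ee^{-\sigma/\de}$.
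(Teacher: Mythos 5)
Your proof is correct and follows essentially the same strategy as the paper's: the lower bound $d\ge\de$ via stability of~$\Om$ under addition, a logarithmic Gronwall estimate to keep each $\xi_j^t$ in $\cK_{\de',L}(\Om)$, an $\ell^1$-Gronwall estimate for the variational system exploiting that the weights $\eta(y_k)/d$ sum to~$1$, and the column-wise product bound on the determinant. The only (harmless) variations are that you symmetrize by including the $0$-th particle (getting rate $2/\de$ with an extra factor~$2$ in the initial condition, where the paper keeps only $k=1,\dots,n$ and gets rate $3/\de$), and that you handle the sheet bookkeeping via a first-crossing time where the paper splits on whether $\abs{\dxi_i^a}$ is below or above $\tfrac12\rho(\Om)\ee^{-L/\de}$.
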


%%%%%%%%%%%%%%%%%%%%%%%%%%%%%%%%%%%%%%%%%%%%%%%%%%%%%

\begin{proof}[Proof of Proposition~\ref{propesimisot}]
We first fix $\un s\in \De_n$, omitting it in the notations, and study the solution 
\[
t\in J \mapsto \un\xi^t \defeq
(\xi^t_1,\ldots,\xi^t_n) \defeq \Psi_t\big(\un\gD\big(\ga(a)\big)(\un s)\big)
\]
of the vector field~$X$ defined by~\eqref{eqdefX}, the components of the initial condition being
$\xi^0_i = 0_\Om + s_i \ga(a)$.
\medskip

%%%%%%%%%%%%%%%%%%%%%%%%%%%%%%%%%%%%%%%%%%%%%%%%%%%%%

\noindent \textbf{(a)}
We observe that ${\dd\dxi^t_i}/{\dd t} = X_i(t,\un\xi^t)$ has modulus $\le
1$ for each $i=1,\ldots,n$, thus the path $t\in J \mapsto \dxi^t_i$ has length
$\le b-a$ and stays in~$\D_L$.

\medskip

%%%%%%%%%%%%%%%%%%%%%%%%%%%%%%%%%%%%%%%%%%%%%%%%%%%%%

\noindent \textbf{(b)}
The denominator~\eqref{eqdefD} is
\[
d(t) \defeq D(t,\un\xi^t) \ge \de, \qquad t\in J.
\]
Indeed, we can write 
$d(t) = \eta\big(\dxi^t_0\big) + \eta\big(\dxi^t_1\big) + \cdots +
\eta\big(\dxi^t_n\big)$ with
$\dxi^t_0 \defeq \ga(t) - S_n(\un\xi^t)$,
and, since $\Om$ is stable under addition and $\dxi^t_0+\dxi^t_1+\cdots+\dxi^t_n=\ga(t)$,
the triangle inequality yields
\[
d(t) = \sum_{i=0}^n \dist\big(\dxi^t_i,\Om\big) \ge \dist\big( \ga(t),\Om \big),
\]
which is $\ge \de$ by assumption.
\medskip

%%%%%%%%%%%%%%%%%%%%%%%%%%%%%%%%%%%%%%%%%%%%%%%%%%%%%

\noindent \textbf{(c)}
We now check that for $t\in J$ and $i=1,\ldots,n$,
\beglabel{ineqencadretadxiti}
\ee^{-L/\de} \, \eta\big(\dxi^a_i\big) 
\le \eta\big(\dxi^t_i\big) \le
\ee^{L/\de} \, \eta\big(\dxi^a_i\big).
\elabel
Since $\eta$ is $1$-Lipschitz, the function $h_i \defeq \eta \circ \dxi_i^t$ is
Lipschitz on~$J$ and its derivative exists a.e.; 
writing $\abs{h_i(t')-h_i(t)} \le \abs*{\dxi_i(t')-\dxi_i(t)}$, we see that a.e.
$\abs{h_i'(t)} \le \abs*{\dxi_i'(t)} = {h_i(t)}/d(t)$
hence
$g_i(t) \defeq \frac{h_i'(t)}{h_i(t)}$ exists a.e.\ and defines $g_i \in
L^\infty(J)$ with 
\beglabel{ineqboundgi}
\abs{g_i(t)} \le 1/\de \quad \text{for a.e.\ $t\in J$}.
\elabel
By the fundamental theorem of Lebesgue integral calculus, $t \mapsto
\int_{a}^t g_i(\tau)\,\dd\tau$ is differentiable a.e.\ and
\[
h_i(t) = h_i(a) \, \exp\Big( \int_{a}^t g_i(\tau)\,\dd\tau \Big),
\qquad t\in J,
\]
whence~\eqref{ineqencadretadxiti} follows in view of~\eqref{ineqboundgi}.
\medskip

%%%%%%%%%%%%%%%%%%%%%%%%%%%%%%%%%%%%%%%%%%%%%%%%%%%%%

\noindent \textbf{(d)}
Now, the fact that $\dxi^a_i = s_i \ga(a) \in \D_{\rho(\Om)/2}$ implies that 
$\dist\big( \dxi^a_i,\Om\setminus\{0\} \big) \ge \rho(\Om)/2$, whence
\[
\eta(\dxi^a_i) = \dist\big( \dxi^a_i,\Om \big) = \abs*{\dxi^a_i} \le \rho(\Om)/2.
\]

If $\abs*{\dxi^a_i} < \demi\rho(\Om)\,\ee^{-L/\de}$, then the second inequality
in~\eqref{ineqencadretadxiti} shows that $\eta\big(\dxi^t_i)$ stays
$<\demi\rho(\Om)$, hence $\xi^t_i$ stays in the lift of $\D_{\rho(\Om)/2}$ in
the principal sheet and $R_\Om(\xi^t_i)$ stays $\ge \demi\rho(\Om) > \de'$.

If $\abs*{\dxi^a_i} \ge \demi\rho(\Om)\,\ee^{-L/\de}$, then the first inequality
in~\eqref{ineqencadretadxiti} shows that 
$\eta\big(\dxi^t_i) \ge \demi\rho(\Om)\,\ee^{-2L/\de}$ which equals~$\de'$,
hence $R_\Om(\xi^t_i)$ stays $\ge \de'$.

We infer that $\xi^t_i \in \cK_{\de',L}(\Om)$ for all $t\in J$ in both cases (in view of
point~\textbf{(a)}, since $\xi^t_i\in\gS_\Om$ can be represented by the the
path~$\Ga_s|t \in \gP_\Om$ which is obtained by concatenation of $[0,s_i\ga(a)]$
and
$\tau\in[a,t]\mapsto\dxi^\tau_i$
and has length $\le \abs{\ga(a)} + b-a \le L$).
\medskip

%%%%%%%%%%%%%%%%%%%%%%%%%%%%%%%%%%%%%%%%%%%%%%%%%%%%%

\noindent \textbf{(e)}
It only remains to study the partial derivatives 
$\frac{\pa \xi^t_i}{\pa s_j}(\un s)$
which, given $t\in J$, exist for almost every $\un s\in\De_n$ by virtue of Rademacher's theorem.
We first prove that
for every $t\in J$, $\un s,\un s'\in\De_n$,
\beglabel{ineqxispxi}
\sum_{i=1}^n \abs{\dxi^t_i(\un s')-\dxi^t_i(\un s)} \le
\ee^{3L/\de} \abs{\ga(a)}\sum_{i=1}^n \abs{s'_i-s_i}.
\elabel

%%%%%%%%%%%%%%%%%%%%%%%%%%%%%%%%%%%%%%%%%%%%%%%%%%%%%
\begin{lem}	\label{lemVarX}
Whenever the function $\eta$ is $1$-Lipschitz on~$\C$ and $\abs{\ga'(\tau)}\le1$
for all $\tau\in J$, the vector field~$X$ defined by \eqref{eqdefD}--\eqref{eqdefX}
satisfies
\beglabel{ineqVarX}
\sum_{i=1}^n \abs{X_i(\tau,\un\ze') - X_i(\tau,\un\ze)} \le
\frac{3}{D(\tau,\un\ze')} \sum_{i=1}^n \abs{\dzeph_i'-\dzeph_i}
\elabel
for any $\tau\in J$ and $\un\ze,\un\ze'\in\gS_\Om^n$.
\end{lem}
%%%%%%%%%%%%%%%%%%%%%%%%%%%%%%%%%%%%%%%%%%%%%%%%%%%%%
\begin{proof}[Proof of Lemma~\ref{lemVarX}]
Let $\tau\in J$ and $\un\ze,\un\ze'\in\gS_\Om^n$. For $i=1,\ldots,n$, we can write
\[
X_i(\tau,\un\ze') - X_i(\tau,\un\ze) = \bigg(
\eta\big(\dzeph_i'\big)-\eta\big(\dzeph_i\big) - 
\big(D(\tau,\un\ze')-D(\tau,\un\ze)\big) \frac{\eta\big(\dzeph_i\big)}{D(\tau,\un\ze)}
\bigg) \frac{\ga'(\tau)}{D(\tau,\un\ze')},
\]
with $D(\tau,\un\ze')-D(\tau,\un\ze) = \sum_{j=0}^n \big( 
\eta\big(\dzeph_j'\big)-\eta\big(\dzeph_j\big)
\big)$,
using the notations $\dzeph_0 = \ga(\tau) - S_n(\un\ze)$,
$\dzeph_0' = \ga(\tau) - S_n(\un\ze')$.
Since $\eta$ is $1$-Lipschitz, we have
$\abs*{\eta\big(\dzeph_j'\big)-\eta\big(\dzeph_j\big)} \le \abs*{\dzeph_j'-\dzeph_j}$ for $j=0,\ldots,n$
and $\abs*{\dzeph_0'-\dzeph_0} \le \sum_{j=1}^n \abs*{\dzeph_j'-\dzeph_j}$,
whence
$\abs*{D(\tau,\un\ze')-D(\tau,\un\ze)} \le \sum_{j=0}^n \abs*{\dzeph_j'-\dzeph_j}
\le 2 \sum_{j=1}^n \abs*{\dzeph_j'-\dzeph_j}$.
The result follows because 
$\sum_{i=1}^n \eta\big(\dzeph_i\big) \le D(\tau,\un\ze)$.
\end{proof}
%%%%%%%%%%%%%%%%%%%%%%%%%%%%%%%%%%%%%%%%%%%%%%%%%%%%%

\begin{proof}[Proof of inequality~\eqref{ineqxispxi}]
Let us fix $\un s,\un s' \in \De_n$ and denote by $\De(t)$ the \lhs\
of~\eqref{ineqxispxi}, \ie
\[
\De(t) = \sum_{i=1}^n \abs{\De_i(t)}, \qquad
\De_i(t) \defeq {\dxi^t_i(\un s')-\dxi^t_i(\un s)}.
\]
For every $t\in J$, we have
\[
\De_i(t) = \De_i(a)
+ \int_a^t \Big( X_i\big(\tau,\un\xi^\tau(\un s')\big)
            -X_i\big(\tau,\un\xi^\tau(\un s)\big) \Big) \,\dd\tau,
\qquad i=1,\ldots, n.
\]
By Lemma~\eqref{lemVarX}, we get
\[
\abs{\De(t)-\De(a)} \le \sum_{i=1}^n \abs{\De_i(t)-\De_i(a)} \le
\int_a^t \frac{3}{D\big(\tau,\un\xi^\tau(\un s')\big)} \De(\tau) \,\dd\tau.
\]
We have seen that $D\big(\tau,\un\xi^\tau(\un s')\big)$ stays $\ge\de$ (this was
point~\textbf{(b)}), thus
$\abs{\De(t)-\De(a)} \le \frac{3}{\de} \int_a^t \De(\tau) \,\dd\tau$ for all
$t\in J$.
Gronwall's lemma yields
\[
\abs{\De(t)} \le \De(a) \, \ee^{3(t-a)/\de}, \qquad t\in J,
\]
and, in view of the initial conditions $\De_i(a) = (s'_i-s_i) \ga(a)$, 
\eqref{ineqxispxi} is proved.
\end{proof}
\medskip

%%%%%%%%%%%%%%%%%%%%%%%%%%%%%%%%%%%%%%%%%%%%%%%%%%%%%

\noindent \textbf{(f)}
Let us fix $t\in J$. For any $\un s\in \De_n$ at which
$(\dxi_1^t,\ldots,\dxi_n^t)$ is differentiable, 
because of~\eqref{ineqxispxi},
the entries of the matrix
$\gJ \defeq \Big[ \frac{\pa\dxi^t_i}{\pa s_j}(\un s)\Big]_{1\le i,j\le n}$
satisfy
\[
\sum_{i=1}^n \abs{\frac{\pa\dxi^t_i}{\pa s_j}(\un s)}
\le \ee^{3L/\de} \abs{\ga(a)},
\qquad j=1,\ldots,n.
\]
We conclude by observing that
\[
\abs{\Det(\gJ)} \le 
\Big(\sum_{i=1}^n \abs{\gJ_{i,1}} \Big) \cdots
\Big(\sum_{i=1}^n \abs{\gJ_{i,n}} \Big) 
\le \big(\ee^{3L/\de} \abs{\ga(a)}\big)^n
\]
(because the \lhs\ is bounded by the sum of the products
$\abs{\gJ_{\sig(1),1}\cdots\gJ_{\sig(n),n}}$
over all bijective maps $\sig \col [1,n] \to [1,n]$, while the middle expression is
equal to the sum of the same products over \emph{all} maps $\sig \col [1,n] \to [1,n]$).
\end{proof}

%%%%%%%%%%%%%%%%%%%%%%%%%%%%%%%%%%%%%%%%%%%%%%%%%%%%%
%%%%%%%%%%%%%%%%%%%%%%%%%%%%%%%%%%%%%%%%%%%%%%%%%%%%%

\begin{proof}[Proof of Theorem~\ref{thmboundconvga}']
Let $\de,L>0$ with $\de < \rho(\Om)/2$ and $\ze \in \cK_{\de,L}(\Om)$. 
We want to prove
\[
\abs{ 1*\hat\ph_1*\cdots*\hat\ph_n(\ze)}
\le \frac{1}{n!} {\big(\rho(\Om)\,\ee^{3L/\de}\big)^n}
\max_{\cK_{\de',L}(\Om)}\abs{\hat\ph_1} \cdots \max_{\cK_{\de',L}(\Om)}\abs{\hat\ph_n}
\]
for any $n\ge1$ and $\hat\ph_1,\ldots,\hat\ph_n \in \hat\gR_\Om$.

We may assume $\ze\not\in \cL_{0_\Om}(\D_{\rho(\Om)})$ (since the behaviour of
convolution products on the principal sheet is already settled
by~\eqref{ineqppalsheet}
and $\ze\in \cL_{0_\Om}(\D_{\rho(\Om)})$ would imply 
$\frac{\abs{\ze}}{n+1} < \rho(\Om)\,\ee^{3L/\de}$).
We can then choose a representative of~$\ze$ in~$\gP_\Om$ which is a
$C^1$ path, the initial part of which is a line segment ending in
$\D_{\rho(\Om)/2}\setminus\D_\de$;
since we prefer to parametrize our paths by arc-length, we take $\ti\ga \col
[\ti a,b] \to \C$ with
$\ti\ga'(t)\equiv1$ and $\leng(\ti\ga) = b-\ti a\le L$, 
and $a\in(\ti a,b)$ such that
\begin{itemize}
\item
$\ti\ga(a) \in \D_{\rho(\Om)/2}$, 
\item
$\ti\ga(t) = \frac{t-\ti a}{a-\ti a}\ti\ga(a)$ for all $t\in [\ti a,a]$,
\item
$\dist\big( \ti\ga(t), \Om \big) \ge\de$ for all $t\in [a,b]$.
\end{itemize}
Now the restriction $\ga$ of~$\ti\ga$ to $[a,b]$ satisfies all the assumptions
of Proposition~\ref{propesimisot},
while formula~\eqref{eqcontgaconvolPsi} of Proposition~\ref{propcontgaconvolPsi}
for $t=b$ can be interpreted as
\beglabel{eqvalueconvolze}
1 * \hat\ph_1 * \cdots * \hat\ph_n(\ze)
= \int_{\De_n}
\hat\ph_1\big(\xi^b_1(\un s)\big) \cdots \hat\ph_n\big(\xi^b_n(\un s)\big)
\Det\bigg[ \frac{\pa\dxi^b_i}{\pa s_j}(\un s)\bigg]_{1\le i,j\le n}
\, \dd s_1\cdots \dd s_n.
\elabel
The conclusion follows immediately, since the Lebesgue measure of~$\De_n$ is $1/n!$.
\end{proof}

%%%%%%%%%%%%%%%%%%%%%%%%%%%%%%%%%%%%%%%%%%%%%%%%%%%%%

We can now prove the main result which was announced in Section~\ref{secmainthm}.

\begin{proof}[Proof of Theorem~\ref{thmboundconvga}]
Let $\de,L>0$ with $\de < \rho(\Om)$, $n\ge1$ and $\hat\ph_1,\ldots,\hat\ph_n
\in \hat\gR_\Om$.
Let $\ze \in \cK_{\de,L}(\Om)$. We must prove
\[
\abs{ \hat\ph_1*\cdots*\hat\ph_n(\ze) }
\le \frac{2}{\de} \cdot \frac{C^n}{n!} \cdot 
\max_{\cK_{\de',L'}(\Om)}\abs{\hat\ph_1} \cdots \max_{\cK_{\de',L'}(\Om)}\abs{\hat\ph_n}. 
\]
One can check that any $\ze' \in \cL_\ze(\D_{\de/2}) = \{\, \ze+w \mid \abs{w} <
\de/2 \,\}$ satisfies
\beglabel{eqdovezep}
\ze' \in \cK_{\de/2,L'}(\Om),
\qquad \text{where $L' \defeq L +\de/2$.}
\elabel
Indeed, $\ze$ is the endpoint of a path~$\ga$ starting from~$0_\Om$, of length
$\le L$, which has $R_\Om\big(\ga(t)\big) \ge \de$. 
In particular $R_\Om(\ze) \ge \de$ thus the path $t\in[0,1] \mapsto
\sig(t)\defeq \ze+t\big(\dzeph'-\dze\big)$ is well-defined.
Either $\ze$ does not lie in the principal sheet of~$\gS_\Om$, then
$\dist(\dze,\Om)\ge\de$ implies $\dist\big(\sig(t),\Om\big)\ge\de/2$ and,
by concatenating~$\ga$ and~$\sig$, we see that \eqref{eqdovezep} holds;
or~$\ze$ is in the principal sheet and then we can choose $\ga$ contained in the
principal sheet and we have at least $\dist\big(\sig(t),\Om\setminus\{0\}\big)\ge\de/2$;
if moreover $\dze\in\D_{\rho(\Om)}$ then also $\sig$ is contained in the principal
sheet, with $R_\Om\big(\sig(t)\big)\ge \de/2$,
whereas if $\dze\not\in\D_{\rho(\Om)}$ then 
$\dist\big(\sig(t),\{0\}\big)\ge \rho(\Om)-\de/2 \ge \de/2$,
hence again $R_\Om\big(\sig(t)\big)\ge \de/2$, thus~\eqref{eqdovezep} holds in all cases.

%%%%%%%%%%%%%%%%%%%%%%%%%%%%%%%%%%%%%%%%%%%%%%%%%%%%%

Thus, by Theorem~\ref{thmboundconvga}',
\[
\max_{\cL_\ze(\D_{\de/2})} \abs{ 1*\hat\ph_1*\cdots*\hat\ph_n }
\le \frac{C^n}{n!}
\max_{\cK_{\de',L'}(\Om)}\abs{\hat\ph_1} \cdots \max_{\cK_{\de',L'}(\Om)}\abs{\hat\ph_n}
\]
with $\de' \defeq \demi \rho(\Om) \, \ee^{-4L'/\de}$ and 
$C \defeq \rho(\Om)\,\ee^{6L'/\de}$, which are precisely the values indicated
in~\eqref{eqdefconstants}. 
The conclusion follows from the Cauchy inequalities.
\end{proof}

%%%%%%%%%%%%%%%%%%%%%%%%%%%%%%%%%%%%%%%%%%%%%%%%%%%%%

\begin{rem}	\label{remmistCNP}
As far as we understand, there is a mistake in \cite{CNP}, in the final argument
given to bound a determinant analogous to our formula~\eqref{ineqDet}:
roughly speaking, these authors produce a deformation of the standard
$n$-simplex through the flow of an autonomous vector field in~$\C^n$ (the
definition of which is not clear to us) and then use the linear differential
equation satisfied by the Jacobian determinant of the flow; however, they
overlook the fact that, since their vector field is not holomorphic, the
Jacobian determinant which can be controlled this way is the real one,
corresponding to the identification $\C^n \simeq \R^{2n}$, whereas the determinant
which appears when computing the integral and that one needs to bound is a
complex linear combination of the $n\times n$ minors of the $2n \times 2n$ real
Jacobian matrix.
\end{rem}

%%%%%%%%%%%%%%%%%%%%%%%%%%%%%%%%%%%%%%%%%%%%%%%%%%%%%
%%%%%%%%%%%%%%%%%%%%%%%%%%%%%%%%%%%%%%%%%%%%%%%%%%%%%

\appendix

\section{Appendix: a class of rectifiable currents 
and their Lipschitz push-forwards}	\label{appNorCur} 

%%%%%%%%%%%%%%%%%%%%%%%%%%%%%%%%%%%%%%%%%%%%%%%%%%%%%

In this appendix, we single out a few facts from Geometric Measure Theory which
are useful in the proof of our main result.
Among the standard references on the subject one can quote
\cite{Federer}, \cite{Simon}, \cite{AmbrosioK}, \cite{Morgan}.

For a differentiable manifold~$M$ and an integer $m\ge0$, we denote
by~$\gE_m(M)$ the space of all $m$-dimensional currents with compact support,
viewed as linear functionals on the space of all $C^\infty$ differential $m$-forms (with
complex-valued coefficients) which are continuous for the usual family of
seminorms (defined by considering the partial derivatives of the coefficients of
forms in compact subsets of charts).
In fact, by taking real and imaginary parts, the situation is reduced to that of
real-valued forms and real-valued currents.
For us, $M=\R^N$ or $M=\gS_\Om^n$, but in the latter case, as far as currents
are concerned, the local biholomorphism $\pi_\Om^{\otimes n}$ makes the
difference between $\gS_\Om^n$ and~$\C^n$ immaterial, and the complex structure
plays no role, so that one loses nothing when replacing~$M$ with~$\R^{2n}$.

%%%%%%%%%%%%%%%%%%%%%%%%%%%%%%%%%%%%%%%%%%%%%%%%%%%%%
\subsection*{Integration currents associated with oriented compact rectifiable sets}
%%%%%%%%%%%%%%%%%%%%%%%%%%%%%%%%%%%%%%%%%%%%%%%%%%%%%

Let $m, N\in\N^*$. We denote by~$\gH^m$ the $m$-dimensional Hausdorff measure
in~$\R^N$.
A basic example of $m$-dimensional current in~$\R^N$ is obtained as follows:

%%%%%%%%%%%%%%%%%%%%%%%%%%%%%%%%%%%%%%%%%%%%%%%%%%%%%
\begin{Def}
Let~$S$ be an {oriented compact $m$-dimensional rectifiable subset of~$\R^N$} 
(\ie $S$ is compact, $\gH^m$-almost all of~$S$
is contained in the union of the images of countably many Lipschitz maps
from~$\R^m$ to~$\R^N$
and we are given a measurable orientation of the approximate tangent $m$-planes%
\footnote{
Recall that, at $\gH^m$-almost every point of~$S$, the cone of approximate
tangent vectors is an $m$-plane \cite[3.2.19]{Federer}, \cite[3.12]{Morgan}).
}
to~$S$)
and, for $\gH^m$-a.e.\ $x\in S$, let $\tau(x)$ be a unit $m$-vector orienting the
tangent $m$-plane at~$x$;
then the formula
\beglabel{eqdefcrocS}
[S] \col \al \;\text{$m$-form on $\R^N$} \mapsto 
\int_S \croc{ \tau(x),\al(x) } \,\dd\gH^m(x)
\elabel
defines a current $[S] \in \gE_m(\R^N)$, the support of which is~$S$.
\end{Def}
%%%%%%%%%%%%%%%%%%%%%%%%%%%%%%%%%%%%%%%%%%%%%%%%%%%%%

This example belongs to the class of \emph{integer rectifiable currents}, for
which the \rhs\ of~\eqref{eqdefcrocS} more generally assumes the form
\[\int_S \croc{ \tau(x),\al(x) } \mu(x) \,\dd\gH^m(x),\]
where~$\mu$ is a \emph{multiplicity function}, \ie an $\gH^m$-integrable
function $\mu \col S \to \N^*$.

One must keep in mind that a rectifiable current is determined by a triple
$(S,\tau,\mu)$ where the orienting $m$-vector~$\tau$ is tangent to the
support~$S$ (at $\gH^m$-almost every point);
this is of fundamental importance in what follows (taking an $m$-vector
field~$\tau$ which is not tangent to~$S$ almost everywhere would lead to very
different behaviours when applying the boundary operator).
In this appendix we shall content ourselves with the case $\mu\equiv1$.

An elementary example is $[\De_N] \in \gE_N(\R^N)$, with the standard
$N$-dimensional simplex $\De_N \subset \R^N$ of Notation~\ref{notesimplexn}
oriented by 
$\tau = \frac{\pa\;\;\;}{\pa x_1} \wedge \cdots \wedge \frac{\pa\;\;\;}{\pa x_N}$.
%

%%%%%%%%%%%%%%%%%%%%%%%%%%%%%%%%%%%%%%%%%%%%%%%%%%%%%
\subsection*{Push-forward by smooth and Lipschitz maps}
%%%%%%%%%%%%%%%%%%%%%%%%%%%%%%%%%%%%%%%%%%%%%%%%%%%%%

The push-forward of a current $T\in\gE_m(\R^N)$ by a smooth map $\Phi \col \R^N
\to \R^{N'}$ is classically defined by dualizing the pullback of differential forms:
\[
\Phi_\# T(\be) \defeq T(\Phi^\#\be),
\qquad \text{$\be$ any $m$-form on $\R^{N'}$,}
\]
which yields $\Phi_\# T \in \gE_m(\R^{N'})$.

For an integration current $T=[S]$ as in~\eqref{eqdefcrocS}, we observe that
the smoothness of~$\al$ is not necessary for the definition of $[S](\al)$ to make sense:
it suffices that $\al$ be defined $\gH^m$-almost everywhere on~$S$, bounded and
$\gH^m$-measurable.
Therefore, \emph{in the top-dimensional case $m=N$, we can associate with the current
$[S]\in\gE_N(\R^N)$ a push-forward $\phi_\#[S] \in \gE_N(\R^{N'})$ by any
Lipschitz map $\phi\col S\to \R^{N'}$, by means of the formula
\beglabel{eqdefphidiesisLip}
\phi_\#[S](\be) \defeq [S](\phi^\#\be),
\qquad \text{$\be$ any $N$-form on $\R^{N'}$.}
\elabel
} 
Indeed, Rademacher's theorem ensures that $\phi$ is differentiable $\gH^N$-almost
everywhere ($\gH^N$ is the Lebesgue measure), with bounded partial
derivatives, hence the pullback form $\phi^\#\be$ is defined almost everywhere
as
\begin{multline*}
\be = \sum_I g_I \, \dd y^{I_1} \wedge \cdots \wedge \dd y^{I_N}
\quad \Longrightarrow \\[1ex]%\quad
\phi^\#\be = \sum_I (g_I\circ\phi) \, 
\dd\phi^{I_1} \wedge \cdots \wedge \dd\phi^{I_N}
= \sum_I (g_I\circ\phi) \Det\bigg[ 
\frac{\pa\phi^{I_i}}{\pa x^j\,} \bigg]_{1\le i,j\le N}
\dd x^1 \wedge \cdots \wedge x^N,
\end{multline*}
where the sums are over all $I = \{ 1 \le I_1 < \cdots < I_N \le N' \}$,
the coordinates in~$\R^{N'}$ are denoted by $(y^1,\ldots,y^{N'})$
and those in~$\R^{N}$ by $(x^1,\ldots,x^{N})$.
The pullback form $\al = \phi^\#\be$ has its coefficients in $L^\infty(\R^N)$,
hence we can define $\phi_\#[S](\be) = [S](\al)$ by~\eqref{eqdefcrocS}.

%%%%%%%%%%%%%%%%%%%%%%%%%%%%%%%%%%%%%%%%%%%%%%%%%%%%%

Having defined $\phi_\#[S]\in\gE_N(\R^{N'})$ by formula~\eqref{eqdefphidiesisLip},
it is worth noticing that $\phi_\#[S]$ can also be obtained by a regularization
process:

%%%%%%%%%%%%%%%%%%%%%%%%%%%%%%%%%%%%%%%%%%%%%%%%%%%%%
\begin{lem}
Let $S$ be an oriented compact $N$-dimensional rectifiable subset of~$\R^N$ and
let $\phi \col S \to \R^{N'}$ be a Lipschitz map.
Consider smooth Lipschitz maps $\Phi_\ell \col \R^N \to \R^{N'}$, $\ell\in\N$,
which have uniformly bounded Lipschitz constants
and converge uniformly to~$\phi$ on~$S$ as $\ell\to\infty$.
Then 
\beglabel{eqconvapproxbeta}
(\Phi_\ell)_\#[S](\be) \xrightarrow[\ell\to\infty]{} \phi_\#[S](\be),
\qquad \text{$\be$ any $N$-form on $\R^{N'}$.}
\elabel 
\end{lem}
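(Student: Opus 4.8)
The plan is to reduce everything to the behaviour of the pullback forms and to invoke the dominated convergence theorem on the top-dimensional integration current $[S]$. First I would fix an $N$-form $\be = \sum_I g_I \, \dd y^{I_1}\wedge\cdots\wedge\dd y^{I_N}$ on $\R^{N'}$, with smooth coefficients $g_I$; since $[S]$ has compact support and each $\Phi_\ell$ together with~$\phi$ maps $S$ into a fixed compact subset of~$\R^{N'}$ (using the uniform convergence on the compact set~$S$ and the uniform Lipschitz bound), we may freely replace the $g_I$ by functions with bounded support, so that $g_I$ and all its first-order partial derivatives are bounded, and $g_I$ is globally Lipschitz on the relevant compact set. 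Write $L$ for a common Lipschitz constant of~$\phi$ and of all the~$\Phi_\ell$.

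Next I would use the explicit expression for the pullback already displayed in the excerpt: for a Lipschitz map $\psi$ differentiable at $x$,
\[
\psi^\#\be(x) = \sum_I (g_I\circ\psi)(x)\,\Det\Bigl[\tfrac{\pa\psi^{I_i}}{\pa x^j}(x)\Bigr]_{1\le i,j\le N}\,\dd x^1\wedge\cdots\wedge\dd x^N.
\]
Applying this with $\psi = \Phi_\ell$ and with $\psi=\phi$, the difference $(\Phi_\ell)_\#[S](\be) - \phi_\#[S](\be)$ is an integral over~$S$ (against the orienting $N$-vector, \wrt~$\gH^N$) of the difference of the two scalar integrands. I would split this difference into two pieces in the standard way: one in which the coefficient $g_I\circ\Phi_\ell$ is replaced by $g_I\circ\phi$ while the Jacobian minors of $\Phi_\ell$ are kept, and one in which $g_I\circ\phi$ multiplies the difference of the Jacobian minors of $\Phi_\ell$ and of~$\phi$. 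For the first piece, $|g_I\circ\Phi_\ell - g_I\circ\phi|\to 0$ uniformly on~$S$ (by the Lipschitz bound on~$g_I$ and the uniform convergence $\Phi_\ell\to\phi$), while the Jacobian minors of~$\Phi_\ell$ are bounded by $N!\,L^N$ uniformly in~$\ell$ and in~$x$; so this piece tends to~$0$.

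The main obstacle is the second piece, since the first-order partial derivatives of~$\Phi_\ell$ need not converge pointwise to those of~$\phi$ — uniform convergence of the maps gives no control on the derivatives. The resolution is \emph{not} to seek pointwise convergence of the minors but to use the weak-$*$ convergence of the partial derivatives: the functions $\pa\Phi_\ell^{I_i}/\pa x^j$ are bounded in $L^\infty$, hence (after passing to a subsequence, which suffices since the limit is identified) converge weakly-$*$ in $L^\infty(S,\gH^N)$, and the weak-$*$ limit must be $\pa\phi^{I_i}/\pa x^j$ — this is because $\Phi_\ell\to\phi$ in, say, $L^1(S)$ identifies the distributional derivatives. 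One then observes that the Jacobian minor, being a polynomial of degree~$N$ in the entries, is \emph{not} weakly-$*$ continuous in general; so instead of expanding, I would invoke the standard fact that the $N\times N$ Jacobian determinant of a Lipschitz map on an $N$-dimensional domain is a \emph{null Lagrangian}, i.e.\ it is a divergence, and more precisely that for the top exterior power one has $(\Phi_\ell)^\#\be = \dd\bigl((\Phi_\ell)^\#\omega_I\bigr)$-type identities allowing integration by parts; alternatively, and more cleanly, I would simply invoke that the regularizations $(\Phi_\ell)_\#[S]$ and the Lipschitz push-forward $\phi_\#[S]$ agree with the push-forward defined through any common mollification, together with Federer's lower-semicontinuity and the fact that mass and action on a \emph{fixed} smooth form pass to the limit by the area formula. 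Concretely: by the area formula, $(\Phi_\ell)_\#[S](\be) = \int_{\R^{N'}} \bigl(\textstyle\sum_{x\in S\cap\Phi_\ell^{-1}(y)} \pm 1\bigr)\, b(y)\,\dd\gH^N(y)$ with $\be = b\,\dd y^1\wedge\cdots\wedge\dd y^N$ (in the case $N'=N$; for $N'>N$ the minors play the role of the degree), and the same for~$\phi$; uniform convergence $\Phi_\ell\to\phi$ plus the uniform Lipschitz bound give uniform integrability of the degree functions and hence convergence of these integrals. This establishes~\eqref{eqconvapproxbeta}. Summing over the finitely many multi-indices~$I$ and recombining real and imaginary parts completes the argument. \eopf
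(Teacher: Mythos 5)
Your first route is the paper's own: split the integrand into $(g_I\circ\Phi_\ell-g_I\circ\phi)$ times the minors of $\Phi_\ell$ (killed by uniform convergence and the uniform $L^\infty$ bound on the minors) plus $(g_I\circ\phi)$ times the difference of minors, and handle the latter by the weak-$*$ convergence in $L^\infty$ of the partial derivatives together with the weak continuity of Jacobian determinants (Reshetnyak's theorem; the paper cites Evans, \S 8.2.4) --- which is exactly the null-Lagrangian / integration-by-parts structure you invoke. Since the second piece is then a pairing of weakly-$*$ convergent minors against the fixed function $(g_I\circ\phi)\,\tau\,\mathbf{1}_S\in L^1\cap L^\infty$, it tends to zero, and the lemma follows. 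That argument is complete; you should commit to it rather than present it as a fallback.

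The route you call ``more cleanly'', via the area formula and degree functions, is the one with a genuine gap. Uniform convergence of $\Phi_\ell$ to $\phi$ plus a uniform Lipschitz bound does give a uniform $L^1$ bound (hence uniform integrability) for the multiplicity functions $y\mapsto\sum_{x\in S\cap\Phi_\ell^{-1}(y)}\pm1$, but uniform integrability of a sequence does not imply convergence of its integrals against a fixed $b$: you would still need the multiplicity functions themselves to converge, say weakly in $L^1$, to the multiplicity function of $\phi$. That is essentially the assertion to be proved, and it is delicate here because $S$ is merely a compact rectifiable set (so there is no open complement of an image of a boundary on which the degree is locally constant and stable under uniform perturbation), and because for $N'>N$ the scalar degree must be replaced by the whole family of $N\times N$ minors, which brings you back to the Reshetnyak argument anyway. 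Drop the alternative and keep the determinant-weak-continuity proof.
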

%%%%%%%%%%%%%%%%%%%%%%%%%%%%%%%%%%%%%%%%%%%%%%%%%%%%%

The proof relies on equicontinuity estimates derived from Reshetnyak's theorem%
\footnote{See \cite{Evans}, \S~8.2.4, Lemma on the weak continuity of determinants.}
which guarantees that in this situation, not only do we have the weak-$*$
convergence in $L^\infty(\R^N)$ for the partial derivatives 
$\frac{\pa\Phi_\ell^{I_k}}{\pa x_j\,} 
\underset{^*}{\rightharpoonup} 
\frac{\pa\phi^{I_k}}{\pa x_j\,}$,
but also for the minors of the Jacobian matrix:
$\Det\big[ \frac{\pa\phi_\ell^{I_i}}{\pa x^j\,} \big]
\underset{^*}{\rightharpoonup} 
\Det\big[ \frac{\pa\phi^{I_i}}{\pa x^j\,} \big]$,
whence $\Phi_\ell^\#\be
\underset{^*}{\rightharpoonup} 
\phi^\#\be$ componentwise in $L^\infty(\R^N)$ and \eqref{eqconvapproxbeta} follows.

%%%%%%%%%%%%%%%%%%%%%%%%%%%%%%%%%%%%%%%%%%%%%%%%%%%%%

Another case of interest is $T = [A(\De)] \in \gE_m(\R^N)$ with $m \le N$, $\De$
an oriented compact $m$-dimensional rectifiable subset of~$\R^m$ and $A \col
\R^m \to \R^N$ an injective affine map
(the unit $m$-vector field orienting $A(\De)$ is chosen to be a positive
multiple of the image of the unit $m$-vector field orienting~$\De$ by the
$m$-linear extension of the linear part of~$A$ to $\La_m\R^m$).
We have $[A(\De)] = A_\#[\De]$, thus the natural definition of the push-forward
of $[A(\De)]$ by a Lipschitz map
$\phi\col A(\De) \to \R^{N'}$ is clearly
\beglabel{eqdefphifiesiszeroDe}
\phi_\#[A(\De)] \defeq (\phi_A)_\#[\De] \in \gE_{N-1}(\R^{N'}),
\qquad \text{with $\phi_A \defeq \phi\circ A \col \De \to \R^{N'}$.}
\elabel
Indeed, one easily checks that when $\phi$ is the restriction to
$A(\De)$ of a smooth map $\Phi \col \R^N \to \R^{N'}$, the above-defined
push-forward $\phi_\#[A(\De)]$ coincides with the classical
push-forward $\Phi_\#[A(\De)]$.
Moreover, also in this case is the regularization process possible:
for any sequence of smooth Lipschitz maps $\Phi_\ell \col \R^N \to \R^{N'}$, $\ell\in\N$,
which have uniformly bounded Lipschitz constants
and converge uniformly to~$\phi$ on $A(\De)$ as $\ell\to\infty$,
we have
\beglabel{eqconvapproxbetabis}
(\Phi_\ell)_\#[A(\De)](\be) \xrightarrow[\ell\to\infty]{} \phi_\#[A(\De)](\be),
\qquad \text{$\be$ any $N$-form on $\R^{N'}$}
\elabel 
(simply because the \lhs\ is $(\Phi_\ell\circ A)_\#[\De](\be)$ and we can
apply~\eqref{eqconvapproxbeta} to the sequence $\Phi_\ell\circ A$ uniformly
converging to $\phi\circ A$ on~$\De$).

%%%%%%%%%%%%%%%%%%%%%%%%%%%%%%%%%%%%%%%%%%%%%%%%%%%%%
\subsection*{The boundary operator and Stokes's theorem}
%%%%%%%%%%%%%%%%%%%%%%%%%%%%%%%%%%%%%%%%%%%%%%%%%%%%%

The boundary operator is defined by duality on all currents $T\in\gE_m(\R^N)$:
\beglabel{eqdefboundaryop}
\pa T(\al) \defeq T(\dd\al), \qquad \text{$\al$ $m$-form on $\R^N$.}
\elabel
The boundary of an integer rectifiable current~$T$ is not necessarily an integer rectifiable current;
if it happens to be, then $T$ is called an \emph{integral current}.
An example is provided by oriented smooth submanifolds~$M$ with boundary; Stokes's
theorem then relates the action of the boundary operator~$\pa$ on the corresponding
integration currents with the action of the boundary operator~$\pa$ of homology:
$\pa [M] = [\pa M] \in \gE_{m-1}(\R^N)$.

Another example is provided by the standard simplex $\De_N \subset \R^N$ of Notation~\ref{notesimplexn};
recall that the orienting unit $n$-vector field is
$\tau \defeq \frac{\pa\;\;\;}{\pa x_1} \wedge \cdots \wedge \frac{\pa\;\;\;}{\pa x_N}$.
Stokes's Theorem yields
\[
\pa[\De_N] = [\Ga_0] + \cdots + [\Ga_N] \in \gE_{N-1}(\R^N),
\]
where 
\[
\Ga_j = \left| \begin{aligned}
&\De_N \cap \{ x_1+\cdots + x_N = 1 \} & &\text{if $j=0$,} \\[1ex]
&\De_N \cap \{ x_j = 0 \} & &\text{if $1\le j \le n,$} \\[1ex]
\end{aligned} \right.
\]
with orienting $(N-1)$-vectors $\tau_j$ defined by $\nu_j \wedge \tau_j =
\tau$, where $\nu_j$ is the outward-pointing unit normal vector field for the
piece~$\Ga_j$ of $\pa \De_N$;
with the notation $e_j = \frac{\pa\;\;\;}{\pa x_j}$, the result is
$\tau_0 = \frac{1}{\sqrt N} (e_2-e_1) \wedge (e_3-e_1) \wedge \cdots \wedge (e_N-e_1)$
(because $\nu_0 = (e_1+\cdots+e_N)/\sqrt N$) and
$\tau_j = (-1)^j e_1 \wedge \cdots \wedge \htab{e_j} \wedge \cdots \wedge e_N$
for $j\ge1$ (because $\nu_j = -e_j$).

Observe that one can write
\beglabel{eqpaDenaffine}
\pa[\De_N] = [A_0(\De_{N-1})] - [A_1(\De_{N-1})] + \cdots + (-1)^N [A_N(\De_{N-1})]
\elabel
with an injective affine map $A_j \col \R^{N-1} \to \R^N$ for each $j=0,\ldots,N$
(taking $A_0(x_1,\ldots,x_{N-1}) = (1-x_1-\cdots-x_{N-1},x_1,\ldots,x_{N-1})$ and
$A_j(x_1,\ldots,x_{N-1}) = (x_1,\ldots,x_{j-1},0, x_j,\ldots,x_{N-1})$ for $j\ge1$).

%%%%%%%%%%%%%%%%%%%%%%%%%%%%%%%%%%%%%%%%%%%%%%%%%%%%%
\subsection*{The commutation formula $\phi_\# \pa [P] = \pa \phi_\# [P]$}
%%%%%%%%%%%%%%%%%%%%%%%%%%%%%%%%%%%%%%%%%%%%%%%%%%%%%

For any $T\in\gE_m(\R^N)$ and any smooth map $\Phi \col \R^N \to \R^{N'}$, the
formula
\beglabel{eqsmoothcomm}
\Phi_\# \pa T = \pa \Phi_\# T \in \gE_{m-1}(\R^{N'})
\elabel
is a simple consequence of the identity $\dd\circ \Phi^\# = \Phi^\#\circ\dd$ on
differential forms.
We can also try to deal with a Lipschitz map~$\phi$ when restricting ourselves to integral
currents.
The following is used in the proof of the main result of this article:

%%%%%%%%%%%%%%%%%%%%%%%%%%%%%%%%%%%%%%%%%%%%%%%%%%%%%

\begin{lem}	\label{lempushcomm}
Let $N\ge1$ and let $\phi \col \De_N \to \R^{N'}$ be Lipschitz;
define $\phi_\#[\De_N]$ by means of~\eqref{eqdefphidiesisLip}
and $\phi_\#\pa[\De_N]$ by means of~\eqref{eqpaDenaffine} and~\eqref{eqdefphifiesiszeroDe}.
Then
\[
\pa\phi_\#[\De_N] = \phi_\#\pa[\De_N].
\]
\end{lem}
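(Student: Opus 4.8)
The plan is to reduce the Lipschitz statement to the already-established smooth commutation formula \eqref{eqsmoothcomm} by a regularization argument, using the approximation lemmas \eqref{eqconvapproxbeta} and \eqref{eqconvapproxbetabis} to pass to the limit. Concretely, I would first extend $\phi$ to a Lipschitz map defined on all of $\R^N$ (by Kirszbraun's theorem, or more elementarily by extending each coordinate function via McShane's formula), and then mollify: choose smooth maps $\Phi_\ell \colon \R^N \to \R^{N'}$ with Lipschitz constants bounded uniformly in $\ell$ and converging uniformly to (the extension of) $\phi$ on the compact set $\De_N$. For each $\ell$ the smooth identity gives $\pa(\Phi_\ell)_\#[\De_N] = (\Phi_\ell)_\#\pa[\De_N]$ as elements of $\gE_{N-1}(\R^{N'})$.

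Next I would test both sides of the desired identity against an arbitrary smooth $(N-1)$-form $\al$ on $\R^{N'}$ and track how the limit $\ell \to \infty$ interacts with each operation. On the left-hand side, $\pa\phi_\#[\De_N](\al) = \phi_\#[\De_N](\dd\al)$ by the definition \eqref{eqdefboundaryop} of the boundary, and since $\dd\al$ is a smooth $N$-form on $\R^{N'}$, the convergence lemma \eqref{eqconvapproxbeta} applied to the sequence $(\Phi_\ell)$ gives
\[
(\Phi_\ell)_\#[\De_N](\dd\al) \xrightarrow[\ell\to\infty]{} \phi_\#[\De_N](\dd\al) = \pa\phi_\#[\De_N](\al).
\]
On the right-hand side, by \eqref{eqpaDenaffine} we have $\phi_\#\pa[\De_N] = \sum_{j=0}^N (-1)^j \phi_\#[A_j(\De_{N-1})]$, and by the definition \eqref{eqdefphifiesiszeroDe} each term is $(\phi\circ A_j)_\#[\De_{N-1}]$; the approximation statement \eqref{eqconvapproxbetabis} (with the simplex $A_j(\De_{N-1})$ and the uniformly converging maps $\Phi_\ell\circ A_j$) gives $(\Phi_\ell)_\#[A_j(\De_{N-1})](\al) \to \phi_\#[A_j(\De_{N-1})](\al)$ for each $j$. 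Summing over $j$ yields $(\Phi_\ell)_\#\pa[\De_N](\al) \to \phi_\#\pa[\De_N](\al)$.

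**The closing step** is then immediate: for each $\ell$ the two quantities $\pa(\Phi_\ell)_\#[\De_N](\al)$ and $(\Phi_\ell)_\#\pa[\De_N](\al)$ coincide by \eqref{eqsmoothcomm}, and they converge respectively to $\pa\phi_\#[\De_N](\al)$ and $\phi_\#\pa[\De_N](\al)$; hence the limits agree for every smooth $\al$, which is exactly $\pa\phi_\#[\De_N] = \phi_\#\pa[\De_N]$ as currents. **The main subtlety** is not the limit argument itself but making sure the two \emph{definitions} of the objects in play are consistent enough for the regularization to apply cleanly: one must check that the ad hoc Lipschitz push-forward $\phi_\#[\De_N]$ of \eqref{eqdefphidiesisLip} and the face-wise definition $\phi_\#\pa[\De_N]$ assembled from \eqref{eqpaDenaffine}–\eqref{eqdefphifiesiszeroDe} both agree with the classical smooth push-forward whenever $\phi$ happens to be smooth — this compatibility is exactly what \eqref{eqconvapproxbeta} and \eqref{eqconvapproxbetabis} were set up to provide, so there is no genuine obstacle, only a bookkeeping check that each current is evaluated against the right class of forms (smooth top-degree forms on the target, so that Rademacher differentiability of $\phi$ suffices). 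It is also worth remarking that the extension of $\phi$ off $\De_N$ does not matter: the value of $\phi_\#[\De_N](\dd\al)$ depends only on $\phi|_{\De_N}$, and similarly each $(\phi\circ A_j)_\#[\De_{N-1}]$ depends only on $\phi$ restricted to the face $A_j(\De_{N-1}) \subset \De_N$, so the argument is insensitive to the choice of Lipschitz extension used to produce the mollifications.
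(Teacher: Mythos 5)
Your proposal is correct and follows essentially the same route as the paper: regularize $\phi$ by smooth maps $\Phi_\ell$ with uniformly bounded Lipschitz constants, apply the smooth commutation formula~\eqref{eqsmoothcomm} for each~$\ell$, and pass to the limit on both sides via~\eqref{eqconvapproxbeta} and~\eqref{eqconvapproxbetabis}. The only addition is your explicit mention of extending $\phi$ off $\De_N$ (Kirszbraun/McShane), which the paper leaves implicit.
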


%%%%%%%%%%%%%%%%%%%%%%%%%%%%%%%%%%%%%%%%%%%%%%%%%%%%%

In fact, it is with $P = [0,1] \times \De_n$ instead of $\De_N$ that this
commutation formula is used in Section~\ref{secdefor}; 
moreover, the target space is~$\gS_\Om^n$ instead of~$\R^{N'}$ but, as mentioned
above, this makes no difference (just take $N'=2n$).
We leave it to the reader to adapt the proof.

%%%%%%%%%%%%%%%%%%%%%%%%%%%%%%%%%%%%%%%%%%%%%%%%%%%%%

\begin{proof}[Proof of Lemma~\ref{lempushcomm}]
We shall use the notation $T = [\De_N] \in \gE_N(\R^N)$.
Let $\be$ be a smooth $(N-1)$-form on $\R^{N'}$ and let
$(\Phi_\ell)_{\ell\in\N}$ be any sequence of smooth maps from $\R^N$
to~$\R^{N'}$ with uniformly bounded Lipschitz constants which converges
uniformly to~$\phi$ on~$\De_N$ as $\ell\to\infty$.
Then the sequence
\[
\pa (\Phi_\ell)_\# T (\be) = (\Phi_\ell)_\# T (\dd\be)
\xrightarrow[\ell\to\infty]{}
\phi_\# T (\dd\be) = \pa\phi_\# T (\be)
\]
by~\eqref{eqdefboundaryop} and~\eqref{eqconvapproxbeta}.
But, by~\eqref{eqsmoothcomm}, this sequence coincides with
\[
(\Phi_\ell)_\# \pa T (\be) = \sum_{j=0}^N (-1)^j (\Phi_\ell)_\#[A_j(\De_{N-1})](\be)
\xrightarrow[\ell\to\infty]{}
\sum_{j=0}^N (-1)^j \phi_\# [A_j(\De_{N-1})](\be) =  \phi_\# \pa T (\be)
\]
by~\eqref{eqpaDenaffine} and~\eqref{eqconvapproxbetabis}.
\end{proof}

%%CCCCCCCCCCCCCCCCCCCCCCCCCCCCCCCCCCCCCCC%%

\vspace{.45cm}

%%%%%%%%%%%%%%%%%%%%%%%%%%%%%%%%%%%%
%%%%%%%%%%%%%%%%%%%%%%%%%%%%%%%%%%%%

\noindent {\em Acknowledgements.}
%
%%%%%%%%%%%%%%%%%%%%%%%%%%%%%%%%%%%%%%%%%%%%%%%%%%%%%
%
We thank M.~Abate, E.~Bedford and J.~\'Ecalle for useful discussions.
We express special thanks to L.~Ambrosio for his help on the theory of currents.
The author is also indebted to the anonymous referees for their help
in improving this article.

The author acknowledges the support of the Centro di Ricerca
Matematica Ennio de Giorgi.
The research leading to these results was partially supported by the
European Comunity's Seventh Framework Program (FP7/2007--2013) under
Grant Agreement n.~236346
and by the French National Research Agency under the reference
ANR-12-BS01-0017.

%%%%%%%%%%%%%%%%%%%%%%%%%%%%%%%%%%%%%%%%%%%%%%%%%%%%%%%
%%%%%%%%%%%%%%%%%%%%%%%%%%%%%%%%%%%%%%%%%%%%%%%%%%%%%%%

%\vspace{.5cm}

% \newpage

%%%%%%%%%%%%%%%%%%%%%%%%%%%%%%%%%%%%%%%%%%%%%%%%%%%%%%%%
%%%%%%%%%%%%%%%%%%%%%%%%%%%%%%%%%%%%%%%%%%%%%%%%%%%%%%%%

\vspace{.55cm}

\noindent
David Sauzin\\[1ex]
CNRS UMI 3483 - Laboratorio Fibonacci \\
Centro di Ricerca Matematica Ennio De Giorgi, \\
Scuola Normale Superiore di Pisa \\
Piazza dei Cavalieri 3, 56126 Pisa, Italy\\
email:\,{\tt{david.sauzin@sns.it}}

\end{document}